\newtheorem{theorem}{Theorem}[section]
\newtheorem{definition}{Definition}[section]
\newtheorem{lemma}{Lemma}[section]
\newtheorem{corollary}{Corollary}[section]
\newtheorem{proposition}[theorem]{Proposition}
\newtheorem{remark}{Remark}[section]
\newtheorem{cond}{Condition}[section]
\newcommand{\be}{\begin{equation}}
\newcommand{\ee}{\end{equation}}
\newcommand{\nn}{\nonumber}
\newcommand{\prob}{\mathbb P}
\newcommand{\expec}{\mathbb E}
\newcommand{\atanh}{{\rm atanh}}
\newcommand{\sss}{\scriptscriptstyle}
\numberwithin{equation}{section}
\newcommand{\vep}{\varepsilon}
\newcommand{\e}{{\rm e}}
\newcommand{\gs}{\left(G_{\sss N}\right)_{N\geq1}}
\newcommand{\rem}[1]{}
\def\1{{\mathchoice {1\mskip-4mu\mathrm l}      
{1\mskip-4mu\mathrm l}
{1\mskip-4.5mu\mathrm l} {1\mskip-5mu\mathrm l}}}
\newcommand{\indic}[1]{\1_{\{#1\}}}
\newcommand{\q}{Q_{\sss N}} 
\newcommand{\m}{\mu_{G_{\sss N}}} 
\newcommand{\p}{P_{\sss N}} 
\newcommand{\CMNd}{\mathrm{CM}_{\sss N}(\boldsymbol{d})}
\newcommand{\CMNreg}{\mathrm{CM}_{\sss N}(\boldsymbol{r})}
\newcommand{\CMNtwo}{\mathrm{CM}_{\sss N}(\mathrm{\textbf{2}})}
\newcommand{\CMNonetwo}{\mathrm{CM}_{\sss N}(\textbf{1},\textbf{2})}
\newcommand{\col}[1]{\textcolor[rgb]{0,0,0}{#1}}
\newcommand{\eqn}[1]{\begin{equation}#1\end{equation}}
\newcommand{\eqan}[1]{\begin{align}#1\end{align}}
\newcommand{\convd}{\stackrel{\sss {\mathcal D}}{\longrightarrow}}
\newcommand{\N}{\mathbb{N}}
\newcommand{\R}{\mathbb{R}}
\newcommand{\sign}{\mathrm{sign}}
\newcommand{\old}[1]{}
\begin{document}

\title{Quenched central limit theorems\\
for the Ising model on random graphs}
\author{
Cristian Giardin\`a$^{\textup{{\tiny(a)}}}$,
Claudio Giberti$^{\textup{{\tiny(b)}}}$,\\
Remco van der Hofstad$^{\textup{{\tiny(c)}}}$,
Maria Luisa Prioriello$^{\textup{{\tiny(a,c)}}}$\;.
\\
{\small $^{\textup{(a)}}$
University of Modena and Reggio Emilia},
{\small via G. Campi 213/b, 41125 Modena, Italy}
\\
{\small $^{\textup{(b)}}$
University of Modena and Reggio Emilia},
{\small Via Amendola 2, 42122 Reggio Emilia, Italy}
\\
{\small $^{\textup{(c)}}$ 
Eindhoven University of Technology,}
{\small P.O. Box 513, 5600 MB Eindhoven, The Netherlands  }
}
\maketitle

\pagenumbering{arabic}

\begin{abstract}
\noindent
The main goal of the paper is to prove central limit theorems for
the magnetization rescaled by {$\sqrt{N}$} for the Ising model on 
random graphs with \col{$N$} vertices. Both random quenched and averaged quenched measures are considered.
We work in the uniqueness regime $\beta>\beta_c$ or $\beta>0$ and $B\neq0$,
where $\beta$ is the inverse temperature, $\beta_c$ is the 
critical inverse temperature and $B$ is the external magnetic field.
In the random quenched setting our results apply to general tree-like random graphs
(as introduced by Dembo, Montanari and further studied by Dommers and the first and 
third author) and our proof follows that of Ellis in $\mathbb{Z}^d$. 
For the averaged quenched setting, we specialize
to two particular random graph models, namely the 2-regular configuration model and
the configuration model with degrees 1 and 2. In these cases our proofs are based on explicit computations
relying on the solution of the one dimensional Ising models.

\end{abstract}


\section{Introduction and main results}
The study of asymptotic results (law of large numbers, central limit theorem, large deviations) is a 
subject of prominent  interest in probability theory.
Ising models on random graphs are ideal models of {\em dependent random variables} in the presence of {\em two sources of randomness}.
They posses a dependence structure between Ising random variables, given 
by the Boltzmann-Gibbs measure, and an extra level of (spatial) disorder given by
the random graph. This is the source of a distinction between the {\em random quenched 
measure}, the {\em averaged quenched measure} and the {\em annealed measure}. 
In the first two cases the environment given by the random graph is frozen, in the last 
case the random environment and the spin variables are treated on the same foot (see Definition 
\ref{measure} for a precise statement and Section \ref{diff_rq_aq} for further comments 
on different type of fluctuations). 

The goal of this paper is to derive asymptotic results for the sum of the spin variables 
in the Ising model on random graphs at the level of the {\em law of large numbers} (LLN) and 
of the {\em central limit theorem} (CLT) with respect to the random quenched and averaged quenched measures. To simplify the analysis we will restrict to the situation 
where the model has only one pure state.  The central limit theorems in the annealed setting for some of the models considered here will be given elsewhere \cite{GGPvdH2}.

There is a rising interest for the study of stochastic processes on random graphs,
mainly due to the link of such models with applications in fields as diverse
as social systems and combinatorial optimization problems.
In particular, a large class of models for which a fairly detailed picture
emerged is that of random graphs that are {\em locally tree-like},
meaning that the local structure around any vertex of the graph is given by a unimodular random tree.  
This class of random graphs arises in various applied contexts \cite{AB, New, NewBook}. Ising models on tree-like random graphs have been first studied with 
the tools of theoretical physics \cite{LVVZ,DGM,DGM2} and more recently they have been the subject 
of rigorous mathematical studies \cite{DG,DM,DGH,MMS,DMS,DMSS,DGH2}.
In particular, a full control has been achieved for the quenched free energy
and the main thermodynamical observables. We will leverage on the knowledge about the quenched free energy in the proof of the \col{{\em random quenched}} CLT for the (rescaled) total spin. It turns out that such a proof can be performed along the lines of  Ellis' proof  of the CLT  in the  $\mathbb{Z}^d$ case \cite{El}. The existence of the limiting cumulant generating function  of the total spin with respect to the  random quenched measure will be a direct consequence of the existence of the thermodynamic limit of the quenched free energy.

On the other hand, the local tree-like assumption does not allow to take
into account the fluctuations due to the spatial structure that become relevant in the study of the \col{{\em averaged quenched}} CLT for the (rescaled) total spin. 
There is therefore a need to expand the formalism to study Ising model
on random graph towards the situation in which the structure of the random
graph in a local neighborhood of a random vertex is allowed to be different
from a tree.

At present, there is no general formalism to treat such situations and therefore we analyze specific models, namely the configuration model with vertex degree 2 and the configuration model with vertex degrees 1 and 2,  which serve as benchmarks. Whereas it is well established that graph fluctuations do not play
any role in the {\em quenched} free energy in the general class of tree-like graphs (indeed averaged quenched and random quenched are the same, see Corollary \ref{corol_avquetermlim}), our analysis will show that fluctuations of the random graph play a crucial role in the study of the rescaled magnetization in the {\em averaged quenched }  set-up.
In particular, when averaging the random Boltzmann-Gibbs distribution with respect
to the randomness of the graph, the variance of the Gaussian limiting law
of the observables satisfying a central limit theorem in the thermodynamic 
limit (such as the rescaled magnetization) is in general affected by the graph 
fluctuations. 
 
The paper is organized as follows. We define the models, state and discuss our main results in this section,
including comments on why this analysis is needed and, so far, not covered in the existing formalism for statistical mechanics models on random graph. The rest of the paper is devoted to the proofs of the theorems stated in the present section.

In Section \ref{two}, in the general context  of locally tree-like random graphs, we prove our results with respect to the {\em random quenched measure}.  We establish the rate at which the law of large numbers for the empirical magnetization is reached, and we prove a  central limit theorem for the (centered)  empirical 
magnetization rescaled by the square root of the volume.  As in the case of the Ising model on the lattice or on the complete graph \cite{El}, the variance appearing in the CLT result is given by the spin susceptibility.

In Sections \ref{three} and \ref{four} we obtain asymptotic results with respect to the {\em averaged quenched measure}. While the law of large numbers can easily be formulated for the entire class of locally tree-like random graphs, the scaling to a normal random variable turns out to be much more challenging. Thus,  we restrict ourselves to the aforementioned  {\em configuration models} and consider the two simplest cases. In particular, we will consider  the configuration model with all vertices having degree two, i.e. 
the 2-regular random graph in  Section \ref{three}, and  the case where a fraction of the vertices  has degrees one and the remaining fraction has degree two, in Section \ref{four}.  In both cases we obtain the central limit theorem by an explicit computation. In the first case, we find that the CLT with respect to the averaged quenched measure is the same as  the CLT with respect to the random quenched measure. In particular 
the variance of the Gaussian law is given by the susceptibility of the Ising model in one dimension. In the second case, we prove that the asymptotic  variance of the rescaled empirical magnetization is {\em larger} than the susceptibility. This difference originates from the fluctuations of the connected component sizes 
of the  configuration model with degrees 1 and 2, that also follows a Gaussian law  in the limit of very large graphs. We argue this behavior to be general, i.e., we conjecture the existence of two different CLT theorems whenever the vertex degrees are not all the same. When all vertex degrees are equal, so that we are considering a random regular graph, we conjecture the variances of the random quenched and averaged quenched cases to be the same.

\vspace{.3cm}

\subsection{Locally tree-like random graphs}
\noindent
The class of models that we consider in this paper is the same as defined in \cite{DM,DGH}. Namely we study  random graph sequences $\left(G_{\sss N}\right)_{N\geq1}$ which are assumed to be \emph{locally like a homogeneous random tree, uniformly sparse} and whose \emph{degree distribution} has a  \emph{strongly finite mean}. 
In order to formally state these assumptions, we need to introduce some notation.

Let the integer-valued random variable $D$ have distribution $P = \left(p_k\right)_{k\geq1}$, i.e. $\mathbb{P}\left(D=k\right) = p_k$, for $k=1,2,\ldots $\; . We define the \emph{size-biased law} $\rho =\left(\rho_k\right)_{k \geq 0}$ of $D$  by
	\be\label{rho_kappa}
	\rho_k = \frac{\left(k+1\right)p_{k+1}}{\mathbb{E}\left[D\right]},
	\ee
where the expected value of $D$ is supposed to be finite, and let $\nu$ be the average value of $\rho$, i.e.,
	\be\label{nu_d}
	\nu := \sum_{k\geq0} k \rho_k = \frac{\mathbb{E}\left[D\left(D-1\right)\right]}
	{\mathbb{E}\left[D\right] }, 
	\ee
that will play an important role in what follows. 

The {\em random rooted tree} $\mathcal{T}\left(D, \rho, \ell\right)$ is a branching process with $\ell$ generations, where the root offspring has distribution $D$ and the vertices in each next generation have offspring that are \emph{independent and identically distributed} (i.i.d.) with distribution $\rho$. 

We write that an event $\mathcal{A}$ holds \emph{almost surely} (a.s.) if $\mathbb{P}\left(\mathcal{A}\right) =1$. The ball $B_i\left(r\right)$ of radius $r$ around vertex $i$ of a graph $G_{\sss N}$ is defined as the graph induced by the vertices at graph distance at most $r$ from vertex $i$. For two rooted trees $\mathcal{T}_1$ and $\mathcal{T}_2$, we write that $\mathcal{T}_1 \cong \mathcal{T}_2$, when there exists a bijective map from the vertices of $\mathcal{T}_1$ to those of $\mathcal{T}_2$ that preserves the adjacency relations.

Throughout this paper, $[N]:=\{1,\ldots, N\}$ will be used to denote the vertex set of $G_{\sss N}$.

\begin{definition}[Local convergence to homogeneous trees] \label{def_localconvtree}Let $\mathbb{P}_{\sss N}$ denote the law induced on the ball $B_i\left(t\right)$ in $G_{\sss N}$ centered at a uniformly chosen vertex $i \in [N] = \left\{1,...,N\right\}$. We say that the graph sequence $\left(G_{\sss N}\right)_{N\geq1}$ is \emph{locally tree-like} with asymptotic degree distributed as  $D$ when, for any rooted tree $\mathcal{T}$ with $t$ generations, a.s.,
	\be
	\lim_{N \rightarrow \infty} \mathbb{P}_{\sss N} \left(B_i\left(t\right) \cong \mathcal{T} \right) 
	= \mathbb{P}\left(\mathcal{T}\left(D, \rho, t\right) \cong \mathcal{T}\right).
	\ee
\end{definition}

\noindent
This property implies, in particular, that the degree of a uniformly chosen vertex in $G_{\sss N}$ is asymptotically distributed as $D$.

\begin{definition}[Uniform sparsity] 
\label{Uni_spar}
We say that the graph sequence $\left(G_{\sss N}\right)_{N\geq1}$ is \emph{uniformly sparse} when a.s.,
	\be
	\lim_{\ell \rightarrow \infty} \limsup_{N \rightarrow \infty} \frac{1}{N} 
	\sum_{i \in [N]} d_i \mathbbm{1}_{\{d_i \geq \ell\}}= 0,
	\ee
where $d_i$ is the degree of vertex $i$ in $G_{\sss N}$ and $\mathbbm{1}_{\mathcal{A}}$ denotes the indicator of the event $\mathcal{A}$.
\end{definition}

\noindent
A consequence of local convergence  and uniform sparsity is that the  number of edges per vertex converges a.s.\ to $\mathbb{E}[D]/2$ as $N$ goes to infinity.

\begin{definition}[Strongly finite mean degree distribution]
\label{Stro_fin}
We say that the degree distribution $P$ has \emph{strongly finite mean} when there exist constants $\tau > 2$ and $c>0$ such that
	\be
	\sum_{i=k}^{\infty} p_i \leq ck^{- \left(\tau -1\right)}.
	\ee
\end{definition}

\noindent
A specific example of a random graph model belonging to the class defined by these properties will be considered in detail in this paper: the configuration model under certain regularity conditions on its degree. We will introduce the configuration model in the next subsection.



\vspace{0.3cm}
\noindent
\subsection{The configuration model}\label{sectCM}
The configuration model is a multigraph, that is, a graph possibly having self-loops and multiple edges between pairs of vertices, with fixed degrees.
Fix an integer $N$ and consider a sequence of integers  $\boldsymbol{d}=\left(d_i\right)_{i \in \left[N\right]}$. The aim is to construct an undirected multigraph with $N$ vertices, where vertex $j$ has degree $d_j$. We assume that $d_j \geq 1$ for all $j \in [N]$ and we define the {\em total degree}
	\be
	\ell_{\sss N} \;:=\; \sum_{i \in [N]} d_{i}.
	\ee
We assume $\ell_{\sss N}$ to be even in order to be able to construct the graph. 

Assuming that initially $d_{j}$ half-edges are attached to each vertex $j\in [N]$,  one way of obtaining a multigraph with the given degree sequence is to pair the half-edges belonging to the different vertices in a uniform way. Two half-edges together form an edge, thus creating the edges in the graph. 
To construct the multigraph with degree sequence  $\boldsymbol{d}$,  the half-edges are numbered in an arbitrary order from 1 to $\ell_{\sss N}$. Then we start by randomly connecting the first half-edge with one of the $\ell_{\sss N} -1$ remaining half-edges. Once paired, two half-edges form a single edge of
the multigraph. We continue the procedure of randomly choosing and pairing the half-edges until all half-edges are connected, and call the resulting graph the \emph{configuration model with degree sequence $\boldsymbol{d}$}, abbreviated as $\CMNd$. Interestingly, $\CMNd$ conditioned on simplicity is a uniform random graph with degree sequence $\boldsymbol{d}$ (see e.g., \cite[Chapter 7]{vdH}).

\noindent
We will consider, in particular, the following models:
\begin{itemize}
\item[(1)] The \emph{2-regular random graph}, denoted by $\CMNtwo$, which is the configuration model with $d_i = 2$ for all $i \in \left[N\right]$.
\item[(2)] The configuration model with $d_i \in \left\{1,2\right\}$ for all $i \in \left[N\right]$, denoted by  $\CMNonetwo$, in which, for a given $p \in \left[0,1\right]$,  we have $N -\lfloor pN\rfloor$ vertices with degree 1 and $\lfloor pN \rfloor$ vertices with degree 2. 
\end{itemize}

\begin{remark}\label{alt_costr}
The  configuration model $\CMNonetwo$ can be implemented by assigning to each vertex degree 2 with probability $p$ or degree 1 with probability $1-p$, conditioned to having $\lfloor pN \rfloor$ vertices of degree 2. Another configuration model would be obtained by considering  the independent Bernoulli assignment  to each vertex. This  yields a random graph that  only on average has  $\lfloor pN \rfloor$ vertices of degree 2.  
\end{remark}

\noindent
The degree sequence of the configuration model  $\CMNd$  is often assumed to satisfy a  {\em regularity condition}, which is expressed as follows. Denoting the degree of an uniformly chosen vertex $V_{\sss N} \in [N]$ by $D_{\sss N} = d_{V_{\sss N}}$, we assume that the following property is satisfied:

\begin{cond}[Degree regularity] There exists a random variable $D$ with finite second moment such that, as $N\rightarrow\infty$, 
\label{cond-DR-CM}
\begin{itemize}
\item[$(a)$] $D_{\sss N} \stackrel{\cal D}{\longrightarrow} D$,
\item[$(b)$] $\mathbb{E}[D_{\sss N}] \rightarrow \mathbb{E}[D] < \infty$,
\item[$(c)$] $\mathbb{E}[D_{\sss N}^2] \rightarrow \mathbb{E}[D^2] < \infty$,
\end{itemize}
where  $\,\stackrel{\cal D}{\longrightarrow}\,$  denotes convergence in distribution.  Further, we assume that $\mathbb{P}(D\ge 1)=1$.
\end{cond}

\noindent
Referring to \cite{vdH}, we have that $\CMNd$ is a uniformly sparse locally tree-like graph when the properties $(a)$ and $(b)$ of the Condition \ref{cond-DR-CM} hold. Moreover when the property $(c)$ of the Condition \ref{cond-DR-CM} holds, we have also strongly finite mean. \\
It it easy to see that the degree sequences of $\CMNtwo$ and  $\CMNonetwo$ satisfy Condition \ref{cond-DR-CM}. For example, for $\CMNonetwo$, we have $\prob(D_{\sss N}=2)=1-\prob(D_{\sss N}=1)=\lfloor pN \rfloor/N\rightarrow p$, so that $\prob(D=2)=1-\prob(D=1)=p$.

\subsection{Measures and thermodynamic quantities}
\label{iniz_def}
We continue by introducing the Ising model.
Two probability measures are of interest.
We  define them on finite graphs with $N$ vertices
and then study asymptotic results  in the limit $N\to\infty$. \\We denote by  $G_{\sss N}=(V_{\sss N},E_{\sss N})$ a random graph with vertex set $V_{\sss N} = \left[N\right]$ and edge set $E_{\sss N} \subset V_{\sss N} \times V_{\sss N}$ and by $\mathcal G_{\sss N}$ the set of all possible graphs with $N$ vertices. 
For any $N\in {\mathbb N}$, we denote by  $\q$ the law of the graphs with $N$ vertices belonging to $\mathcal G_{\sss N}$.
\begin{definition}[Measures and expectations]
\label{measure}
For spin variables  $\sigma = \left (\sigma_1,\ldots,\sigma_{\sss N}\right )$
taking values on the space of spin configurations $\Omega_{\sss N}=\{-1,1\}^N$
we  consider the following measures:
\begin{description}
%
\item[(i) Random quenched measure.] 
For a given realization $G_{\sss N}\in \mathcal G_{\sss N}$ 
the random quenched measure coincides with the random Boltzmann--Gibbs distribution
	\begin{equation}
	\label{bg}
	\m(\sigma)
	=\frac{\exp \left[ \beta \sum_{(i,j)\in E_{\sss N}}{\sigma_i \sigma_j}+B \sum_{i \in[N]} {\sigma_i}\right]}
	{Z_{G_{\sss N}} \left( \beta, B \right)},
	\end{equation}
where
	\begin{equation}
	Z_{G_{\sss N}} \left( \beta, B \right)
	= \sum_{\sigma \in \Omega_{\sss N}} \exp \left[ \beta \sum_{(i,j)\in E_{\sss N}}{\sigma_i \sigma_j} 
	+ B \sum_{i \in[N]} {\sigma_i} \right] 
	\end{equation}
is the partition function. Here $\beta \ge 0$ is the inverse temperature and $B\in\mathbb{R}$ is the uniform external magnetic field.
\item[(ii) Averaged quenched measure.] 
This law is obtained by averaging the random Boltzmann--Gibbs distribution over all possible
random graphs, i.e.,
	\begin{equation}
	\p(\sigma) = \q(\m(\sigma))
	=\q\left(\frac{\exp \left[ \beta \sum_{(i,j)\in E_{\sss N}}{\sigma_i \sigma_j} 
	+ B \sum_{i \in[N]} {\sigma_i}  \right] }{Z_{G_{\sss N}} \left( \beta, B \right)}\right).
	\end{equation}
\end{description}
\end{definition}

\noindent
An extensive discussion about these two settings can be found in Section \ref{diff_rq_aq}.\\
\noindent
With a slight abuse of notation, in the following, we use the same symbol to denote both measures and their corresponding \col{expectations}. 
Moreover, we remark that all the measures defined above depend sensitively on the two parameters $(\beta,B)$. However, for the sake of notation, we drop the dependence of the measures on these parameters from the notation.  We denote the partition function by $Z_{\sss N}$ instead $Z_{G_{\sss N}}$ and sometimes we use  $\text{Var}_{\mu}(X)$ to denote the variance of a random variable $X$ with law $\mu$.

\vspace{0.4cm}

\begin{definition}[Thermodynamic quantities \cite{DM,DGH}]
\label{def_th_quant}
For a given $N\in\mathbb{N}$, we introduce the following thermodynamics quantities
at finite volume:
\begin{itemize}

\item[(i)] The  \emph{random quenched pressure}:
	\be\label{press_N}
	\psi_{\sss N} (\beta, B) \,=\, \frac{1}{N} \log Z_{\sss N} \left( \beta, B \right)  \;.
	\ee

\item[(ii)] The  \emph{averaged quenched pressure}:
	\be\label{av_qu_press_N}
	\overline{\psi}_{\sss N} (\beta, B) \,=\, \frac{1}{N} \q(\log Z_{\sss N} \left( \beta, B \right))  \;.
	\ee

\item[(iii)] The  \emph{random quenched} and \emph{averaged quenched magnetizations},  respectively :
	\be\label{magn_N}
	M_{\sss N} (\beta, B) \,=\, \m\left(\frac{S_{\sss N}}{N}\right), \quad \quad 
	\overline{M}_{\sss N} (\beta, B) \,=\, \p\left(\frac{S_{\sss N}}{N}\right),
	\ee
where the  \emph{total spin} is defined as
	\be
	S_{\sss N}= \sum_{i \in [N]} \sigma_i \;.
	\ee


\item[(iv)] The \emph{random quenched susceptibility}:
	\be\label{susc_N}
	\chi_{\sss N}(\beta,B) \,=\, \text{Var}_{\m} \left(\frac{S_{\sss N}}{\sqrt{N}}\right) \, 
	=\, \frac{\partial}{\partial B} M_{\sss N} (\beta, B). 
	\ee
We also define the variance with respect to the average quenched measure as
	\be\label{var_risp_aq}
	\quad \quad \overline{\chi}_{\sss N}(\beta,B) \,=\, \text{Var}_{\p} \left(\frac{S_{\sss N}}{\sqrt{N}}\right). 
	\ee

\end{itemize}
\end{definition}

\noindent
We are interested in the thermodynamic limit of these quantities, i.e., their limits as $N\to \infty$. In this limit critical phenomena  may appear.  When ${\cal M}(\beta, B):=\lim_{N\to \infty} {\cal M}_{\sss N}(\beta, B)$, where $ {\cal M}_{\sss N}(\beta, B)$ is any of the magnetizations defined in (\ref{magn_N}), criticality manifests itself in the behavior of the {\em spontaneous magnetization}  defined as ${\cal M}(\beta, 0^{+})=\lim_{B \downarrow 0} {\cal M}(\beta, B)$. In fact, the {\em critical inverse temperature}  is defined as
	\be
	\beta_{c}:=\inf \{\beta > 0: {\cal M}(\beta, 0^{+})>0 \},
	\ee 
and thus, depending on the setting, we can obtain the {\em random quenched} and {\em averaged quenched critical points} denoted by $\beta_c^{\mathrm{rq}}$ and $\beta_c^{\mathrm{aq}}$, respectively. When, for either of the two, $0<\beta_{c} <\infty$, we say that the system undergoes a {\em phase transition} at $\beta=\beta_{c}$.

In the next theorem we collect some results taken from \cite{DM,DGH}, that guarantee the existence of the thermodynamic limit of the quantities previously defined in the random quenched setting. In order to state the existence of the limit magnetization,  we define the set
	\be
	\mathcal{U}^{rq}:=\, \left\{\left(\beta, B\right): \beta\ge 0, B\neq0 \; \mbox{or} \, \; 
	0< \beta < \beta_c^{\mathrm{rq}}, B=0\right\}
	\ee
where $\beta_c^{\mathrm{rq}}$ is the random quenched critical value which is identified in the next theorem:

\begin{theorem}[Thermodynamic limits for the random quenched law \cite{DM,DGH}]\label{term_lim}
Assume that the random graph sequence $\left(G_{\sss N}\right)_{N\geq1}$ is locally tree-like, 
uniformly sparse, and with asymptotic degree distribution $D$ with strongly finite mean, then the following conclusions hold:
\begin{itemize}

\item[(i)] For all $0\le \beta < \infty$ and $B\in \mathbb{R}$, the quenched pressure exists almost surely in the thermodynamic limit $N\to\infty$ and is given by
	\begin{equation}\label{lim_press}
	\psi (\beta, B)  \,:=\,  \lim_{N\rightarrow \infty} \psi_{\sss N} (\beta, B).
	\end{equation}
Moreover, $\psi (\beta, B)$ is a non-random quantity.

\item[(ii)]   For all $\left(\beta, B\right) \in \mathcal{U}^{rq}$,
the \emph{random quenched magnetization per vertex} exists almost surely in the limit $N\to\infty$
and is given by
	\begin{equation}\label{lim_magn}
	M(\beta, B) \,:=\, \lim_{N\rightarrow \infty} M_{\sss N} (\beta, B).
	\end{equation}
The limit value $M(\beta, B)$ equals: $M(\beta, B) \,=\, \frac{\partial}{\partial B} \psi (\beta, B)\,$ for 
$B\neq 0$, whereas $M(\beta, B)=0$ in the region $0< \beta < \beta_c^{\mathrm{rq}}$, $B=0$\;.

\item[(iii)] The critical inverse temperature is given by
	\be
	\label{betac}
	\beta_c^{\mathrm{rq}} = \atanh\left(1/\nu\right),
	\ee
where $\nu$ is defined in (\ref{nu_d}).

\item[(iv)]  For all $\left(\beta, B\right) \in \mathcal{U}^{rq}$, the thermodynamic limit of susceptibility exists almost surely and is given by
\begin{equation}\label{lim_susc}
\chi(\beta,B) \,:=\, \lim_{N\rightarrow \infty} \chi_{\sss N} (\beta, B) \,=\, \frac{\partial^2}{\partial B^2} \psi (\beta, B).
\end{equation}
\end{itemize}
\end{theorem}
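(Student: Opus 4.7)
The plan is to combine the local weak convergence of $(G_{\sss N})$ with standard Ising techniques (Griffiths' inequalities, convexity of the pressure in $B$, and belief propagation on the limiting tree), handling parts (i)--(iv) sequentially with (i) as the backbone. For (i), following the Dembo--Montanari strategy and using $\partial_\beta \log Z_{\sss N} = \sum_{(i,j)\in E_{\sss N}} \langle\sigma_i\sigma_j\rangle_{\beta,B}$, one can write
\begin{equation*}
\psi_{\sss N}(\beta,B) \;=\; \log(2\cosh B) \;+\; \int_0^\beta \frac{1}{N}\sum_{(i,j)\in E_{\sss N}} \langle\sigma_i\sigma_j\rangle_{\beta',B}\,d\beta'.
\end{equation*}
The key input is that, in the Ising uniqueness region, the edge correlation $\langle\sigma_i\sigma_j\rangle$ is essentially a function of the radius-$r$ neighborhood of $(i,j)$ up to an error exponentially small in $r$. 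By local convergence this neighborhood converges in distribution to a rooted tree, while uniform sparsity together with the strongly finite mean hypothesis of Definition \ref{Stro_fin} controls the contribution of atypically high-degree vertices when averaging over edges. This produces an a.s., non-random limit $\psi$, expressed as an expectation against the limiting tree $\calT(D,\rho,\infty)$. The non-uniqueness segment $\beta>\beta_c^{\mathrm{rq}}$, $B=0$ is then handled by convexity of $\psi_{\sss N}(\beta,\cdot)$ and continuity, taking the limit for $B\neq 0$ and sending $B\downarrow 0$.

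For (ii), the identity $M_{\sss N} = \frac{\partial}{\partial B}\psi_{\sss N}$, combined with pointwise convergence of the convex functions $\psi_{\sss N}(\beta,\cdot)$ to the differentiable limit $\psi(\beta,\cdot)$ on $\mathcal{U}^{rq}$, gives $M_{\sss N} \to \partial_B\psi$ by the standard convex-analysis fact that pointwise convergence of convex functions upgrades to convergence of derivatives at points where the limit is differentiable. For $0<\beta<\beta_c^{\mathrm{rq}}$, $B=0$, the limit vanishes by the very definition of $\beta_c^{\mathrm{rq}}$. For (iii), I would analyse belief propagation for the Ising model at $B=0$ on the infinite random tree $\calT(D,\rho,\infty)$: the cavity recursion $h = \sum_k \atanh(\tanh(\beta)\tanh(h_k))$ has the trivial fixed point $h=0$, and its linearisation yields a branching operator of mean $\nu\tanh(\beta)$; the trivial fixed point is stable (uniqueness and vanishing spontaneous magnetization) if and only if $\nu\tanh(\beta)<1$, identifying $\beta_c^{\mathrm{rq}}=\atanh(1/\nu)$.

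For (iv), I would write
\begin{equation*}
\chi_{\sss N}(\beta,B) \;=\; \frac{1}{N}\sum_{i,j\in[N]}\bigl[\langle\sigma_i\sigma_j\rangle-\langle\sigma_i\rangle\langle\sigma_j\rangle\bigr]
\end{equation*}
and exploit exponential decay of the truncated correlations in graph distance throughout $\mathcal{U}^{rq}$. Grouping vertices by their graph distance from $i$ and using local convergence reduces the sum to an a.s.\ limit given by the analogous truncated sum on the limiting tree, which by the integral representation of $\psi$ coincides with $\partial_B^2\psi$. The main obstacle is exactly this transfer of information from the pressure to its derivatives: pointwise convergence of $\psi_{\sss N}$ alone does not imply convergence of $M_{\sss N}$ or of $\chi_{\sss N}$. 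The cleanest route is via uniform exponential decay of truncated correlations on $G_{\sss N}$, which is precisely what uniqueness on the limiting tree provides, explaining why the restriction to $\mathcal{U}^{rq}$ is natural for (ii) and (iv) and why the critical line must be excluded.
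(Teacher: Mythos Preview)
The paper does not actually prove Theorem~\ref{term_lim}: it is stated as a collection of results ``taken from \cite{DM,DGH}'' and is used as input for the rest of the arguments, with no proof given in the present paper. So there is no in-paper proof to compare against.

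Your sketch is faithful to the Dembo--Montanari strategy that underlies those cited results: the $\beta$-interpolation for $\psi_{\sss N}$, local weak convergence plus uniform sparsity to control edge correlations, convexity in $B$ to transfer pointwise convergence of $\psi_{\sss N}$ to convergence of $M_{\sss N}$, the cavity/belief-propagation linearisation on $\calT(D,\rho,\infty)$ giving the threshold $\nu\tanh\beta=1$, and exponential decay of truncated correlations for the susceptibility. One point worth tightening if you ever write this out in full: your handling of the segment $\beta>\beta_c^{\mathrm{rq}}$, $B=0$ (``take $B\neq 0$ and send $B\downarrow 0$'') establishes only that the limit of $\psi_{\sss N}(\beta,0)$ exists along the boundary by convexity and continuity in $B$, but the actual value at $B=0$ requires an a~priori argument that $\psi_{\sss N}(\beta,0)$ itself converges, not merely that the boundary limit of $\psi(\beta,B)$ is well defined; in \cite{DM} this is handled directly since the edge-correlation argument at $B=0$ still works for the pressure even though uniqueness fails. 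Similarly, for (iv) the exponential decay of truncated correlations on the \emph{finite} graphs $G_{\sss N}$, uniform in $N$, is the nontrivial ingredient and in \cite{DM,DGH} is obtained via monotonicity and comparison with the tree rather than assumed.
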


\noindent
Let us remark that, since  $\nu\le 1$ for both  $\CMNtwo$ and $\CMNonetwo$, from \eqref{betac}  it follows  that $\beta^{\mathrm{rq}}_c= \infty$, which means that  there is no quenched phase transition in these models. On the other hand, it is interesting to note that $\CMNtwo$ is critical as far as the existence of the giant component is concerned  \cite{vdH}, since it has $\nu=1$.

The next corollary states the existence of the limit of the thermodynamic quantities in the averaged quenched setting, showing also that averaged quenched pressure and magnetization coincide with their random quenched  counterparts. As a consequence, the averaged quenched critical inverse temperature $\beta^{\mathrm{aq}}_{c}$ coincides with the random quenched one, i.e., $\beta^{\mathrm{aq}}_{c}=\beta_{c}^{\mathrm{rq}}$. We denote this unique critical value by $\beta_c^{\mathrm{qu}}$, and denote the uniqueness set by $\mathcal{U}^{\mathrm{qu}}$.

\begin{corollary}[Thermodynamic limits for the averaged quenched law]\label{corol_avquetermlim}
Under the assumptions of Theorem \ref{term_lim}, the following conclusions hold.
\begin{itemize}
\item[(i)] The thermodynamic limit of the averaged quenched pressure exists almost surely and is given by
	\begin{equation}\label{aver_lim_press}
	\overline{\psi} (\beta, B)  \,:=\,  \lim_{N\rightarrow \infty} \overline{\psi}_{\sss N} (\beta, B)  
	\,=\, \psi(\beta, B).
	\end{equation}

\item[(ii)]   For all $\left(\beta, B\right) \in \mathcal{U}^{rq}$, the thermodynamic limit of the averaged quenched magnetization exists almost surely and is given by
	\begin{equation}\label{aver_lim_magn}
	\overline{M}(\beta, B) \,:=\, \lim_{N\rightarrow \infty} \overline{M}_{\sss N} (\beta, B) \,=\,M(\beta, B) .
	\end{equation}

\item[(iii)]  For all $\left(\beta, B\right) \in \mathcal{U}^{\mathrm{qu}}$
	\begin{equation}\label{aver_lim_susc}
	\liminf_{N \to \infty}\overline{\chi}_{\sss N}(\beta,B) \, \geq \,  \chi (\beta, B).
	\end{equation}
\end{itemize}
\end{corollary}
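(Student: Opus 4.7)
\noindent\textbf{Proof plan for Corollary \ref{corol_avquetermlim}.}

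The strategy is to leverage the almost sure convergence supplied by Theorem \ref{term_lim} and push it through $Q_{\sss N}$ by bounded/dominated convergence for (i)--(ii); for (iii) the essential new ingredient is the law of total variance, combined with Fatou's lemma.

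For part (i), note that by definition $\overline{\psi}_{\sss N}=Q_{\sss N}(\psi_{\sss N})$ and Theorem \ref{term_lim}(i) gives $\psi_{\sss N}\to\psi$ almost surely with $\psi$ non-random. The elementary bounds on the partition function yield
\[
0\;\le\; \psi_{\sss N}(\beta,B)\;\le\; \log 2+\beta\,\frac{|E_{\sss N}|}{N}+|B|,
\]
and $|E_{\sss N}|/N=\tfrac{1}{2N}\sum_{i\in[N]}d_i$ converges a.s.\ and in $L^1(Q_{\sss N})$ to $\mathbb{E}[D]/2$ by uniform sparsity together with the strongly finite mean hypothesis. Hence $(\psi_{\sss N})$ is uniformly integrable under $Q_{\sss N}$, and dominated convergence gives $\overline{\psi}_{\sss N}\to\psi$.

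Part (ii) is even simpler: by a direct computation $\overline{M}_{\sss N}=Q_{\sss N}(M_{\sss N})$ since $P_{\sss N}(\cdot)=Q_{\sss N}(\mu_{G_{\sss N}}(\cdot))$, and $|M_{\sss N}|\le 1$ pointwise. The a.s.\ convergence $M_{\sss N}\to M$ on $\mathcal U^{rq}$ from Theorem \ref{term_lim}(ii) then gives the conclusion by bounded convergence.

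For part (iii), the key identity is the law of total variance applied with the graph $G_{\sss N}$ as conditioning variable. Writing $S_{\sss N}/\sqrt N$ for the random variable, and using $\mu_{G_{\sss N}}(S_{\sss N}/\sqrt N)=\sqrt N\, M_{\sss N}$,
\[
\overline{\chi}_{\sss N}
\;=\;Q_{\sss N}\!\left(\operatorname{Var}_{\mu_{G_{\sss N}}}\!\left(\tfrac{S_{\sss N}}{\sqrt N}\right)\right)
+\operatorname{Var}_{Q_{\sss N}}\!\left(\mu_{G_{\sss N}}\!\left(\tfrac{S_{\sss N}}{\sqrt N}\right)\right)
\;=\;Q_{\sss N}(\chi_{\sss N})+N\,\operatorname{Var}_{Q_{\sss N}}(M_{\sss N}).
\]
The second summand is non-negative, and $\chi_{\sss N}\ge 0$ converges almost surely to $\chi(\beta,B)$ by Theorem \ref{term_lim}(iv); Fatou's lemma applied under $Q_{\sss N}$ then yields $\liminf_{N\to\infty}Q_{\sss N}(\chi_{\sss N})\ge\chi(\beta,B)$, and the inequality in (iii) follows at once. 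The main obstacle is really in (i), where one must verify the uniform integrability of $|E_{\sss N}|/N$ under $Q_{\sss N}$; once this is in place, everything else is either a bounded-convergence argument or the standard variance decomposition, and the dominant-graph-fluctuation term $N\operatorname{Var}_{Q_{\sss N}}(M_{\sss N})$ is precisely the quantity whose growth rate is analyzed in the subsequent sections for the specific configuration models.
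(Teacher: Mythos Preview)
Your proposal is correct and follows essentially the same route as the paper: for (i) and (ii) you pass the almost-sure convergence through $Q_{\sss N}$ by an integrability argument, and for (iii) you use the law of total variance together with Fatou's lemma, exactly as the paper does.

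One point worth noting: the paper simply invokes the ``Bounded Convergence Theorem'' for (i), whereas you correctly flag that $\psi_{\sss N}$ is not obviously uniformly bounded (since $|E_{\sss N}|/N$ is random) and instead argue uniform integrability via the bound $0\le\psi_{\sss N}\le \log 2+\beta|E_{\sss N}|/N+|B|$ and the $L^1$-control on $|E_{\sss N}|/N$ coming from local convergence and uniform sparsity. This is the more careful justification; the paper's proof is terser on this point. For (ii), where $|M_{\sss N}|\le 1$, bounded convergence is genuinely enough and both arguments coincide.
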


\begin{proof}
Since $\overline{\psi}_{\sss N} (\beta, B) \,=\, \q \left(\psi_{\sss N} (\beta, B)\right)$, by  item $(i)$ of Theorem \ref{term_lim} and the Bounded Convergence Theorem we obtain $\overline{\psi} (\beta, B) \,=\, \psi (\beta, B)$. Item $(ii)$ can be proved in  the same way.
The proof of  the statement $(iii)$ is a straightforward  consequence of the law of total variance:
	\be\label{cond-var}
	\text{Var}_{\p} \left(\frac{S_{\sss N}}{\sqrt{N}}\right) = 
	\q \left( \text{Var}_{\m} \left(\frac{S_{\sss N}}{\sqrt{N}}\right)     \right) +
 	\text{Var}_{\q} \left(\m\left(\frac{S_{\sss N}}{\sqrt{N}}\right) \right)\;,
	\ee
i.e., using the thermodynamic quantities in Definition \ref{def_th_quant},
\be\label{cond-var-th-q}
	\overline{\chi}_{\sss N} = 
	\q \left( {\chi}_{\sss N}\right) +
 	\text{Var}_{\q} \left(\sqrt{N}M_N \right).
	\ee
	In  fact, recalling the almost sure convergence of the non negative variables $ {\chi}_{\sss N}(\beta , B)$ to the {\em non random quantity} $\chi (\beta, B)$, see \eqref{lim_susc}, and applying  Fatou's lemma, we have
	$\liminf_{N \to \infty}{\q}\left ({\chi}_{\sss N}\right) \ge  \chi $ that, with \eqref{cond-var-th-q}, gives the thesis.  
	
\end{proof}

\subsection{LLN and CLT in random quenched setting}
\label{sec-RQ-setting}
We will study the asymptotic behavior of the total spin $S_{\sss N}$ under different
scalings in the random quenched setting.  In \cite{DGH} it is proved that 
this sum normalized by $N$ converges almost surely to a number that is the magnetization of the model (\ref{lim_magn}).
As a preliminary step we will prove here a similar result with a different approach 
based on large deviation theory, which leads to exponentially fast convergence in probability.   Such kind of convergence is defined as follows.

\begin{definition}[Exponential convergence]
\label{def_expconverg}
We say that a sequence of random variables $X_{\sss N}$ with laws $\mu_{\sss N}$ \emph{converges in probability exponentially fast} to a constant $x_0$ w.r.t. 
 $\mu_{\sss N}$   and we write $X_{\sss N} \stackrel{\exp}{\longrightarrow} x_0$, if for any $\varepsilon > 0$ there exists a number $L=L(\varepsilon) > 0$ such that
$$\mu_{\sss N} \left(\left | X_{\sss N} - x_0\right| \geq \varepsilon \right) \leq \e^{-NL}  \quad \mbox{for all sufficiently large N.}$$ 
 \end{definition}

\noindent
Our first result in the random quenched setting is the Strong Law of Large Number for $S_{\sss N}/N$.
\begin{theorem}[Random quenched SLLN]
\label{random_que_SLLN}
\col{Let   $\left(G_{\sss N}\right)_{N\geq1}$ be a sequence of random graphs that are locally tree-like, 
uniformly sparse, and with asymptotic degree distribution with strongly finite mean. 
Then, for all $\left(\beta, B\right) \in \mathcal{U}^{\mathrm{qu}}$}
	\begin{equation}\nonumber
	\frac{S_{\sss N}}{N} \stackrel{\exp}{\longrightarrow} M \qquad 
	\mbox{w.r.t.} \;\; \m, \; \quad \mbox{as}\; \; N \rightarrow \infty,
	\end{equation}
where $M= M(\beta,B)$ is defined in \eqref{lim_magn}.
\end{theorem}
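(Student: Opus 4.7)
The plan is to follow an exponential Chernoff argument in the spirit of Ellis \cite{El}. The central observation is that the scaled logarithmic moment generating function of $S_{\sss N}$ under $\m$ is a difference of quenched pressures: for any $t\in\R$,
\be
\m\bigl(\e^{tS_{\sss N}}\bigr)\,=\,\frac{Z_{G_{\sss N}}(\beta,B+t)}{Z_{G_{\sss N}}(\beta,B)},
\ee
so that $c_{\sss N}(t):=\tfrac{1}{N}\log\m(\e^{tS_{\sss N}})=\psi_{\sss N}(\beta,B+t)-\psi_{\sss N}(\beta,B)$. By Theorem~\ref{term_lim}(i), for every fixed $t\in\R$ this difference converges $\q$-almost surely to the deterministic limit $c(t):=\psi(\beta,B+t)-\psi(\beta,B)$, and pointwise convergence at any finite set of values of $t$ holds on a single full-measure event.

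Next, I would establish the three properties of $c$ needed for the Chernoff step: $c$ is convex on $\R$ (as a pointwise limit of convex functions, or directly as a log-Laplace transform), $c(0)=0$, and $c$ is differentiable at the origin with $c'(0)=M$. When $B\neq 0$ this is immediate from Theorem~\ref{term_lim}(ii), which identifies $M$ with $\partial_B\psi(\beta,B)$. When $B=0$ and $\beta<\beta_c^{\mathrm{qu}}$, the even symmetry $\psi(\beta,-B)=\psi(\beta,B)$ combined with convexity forces the one-sided derivatives at $0$ to coincide and to equal $0=M$. Consequently, for any fixed $\varepsilon>0$ I can choose deterministic values $t_+>0$ and $t_-<0$ satisfying
\be
c(t_+)-t_+M\,<\,t_+\varepsilon/2\qquad\text{and}\qquad c(t_-)-t_- M\,<\,|t_-|\varepsilon/2.
\ee

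The argument concludes with standard Chernoff bounds. On the full-$\q$-measure event on which $c_{\sss N}(t_\pm)\to c(t_\pm)$, one has $c_{\sss N}(t_\pm)\le c(t_\pm)+|t_\pm|\varepsilon/4$ for all sufficiently large $N$, and then Markov's inequality applied to $\e^{t_\pm S_{\sss N}}$ yields
\begin{align}
\m\!\left(\tfrac{S_{\sss N}}{N}\ge M+\varepsilon\right)&\,\le\, \e^{-N[t_+(M+\varepsilon)-c_{\sss N}(t_+)]}\,\le\, \e^{-Nt_+\varepsilon/4},\\
\m\!\left(\tfrac{S_{\sss N}}{N}\le M-\varepsilon\right)&\,\le\, \e^{-N[t_-(M-\varepsilon)-c_{\sss N}(t_-)]}\,\le\, \e^{-N|t_-|\varepsilon/4},
\end{align}
so that Definition~\ref{def_expconverg} is satisfied with $L(\varepsilon):=\min(t_+,|t_-|)\varepsilon/8$. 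The only mildly delicate point is confirming differentiability of $c$ at the origin throughout $\mathcal{U}^{\mathrm{qu}}$; once this is handled by convexity and symmetry in the zero-field case, the rest of the proof is a routine exponential Markov estimate driven by the almost sure pressure convergence of Theorem~\ref{term_lim}.
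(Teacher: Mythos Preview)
Your proof is correct and follows essentially the same route as the paper: express $c_{\sss N}(t)=\psi_{\sss N}(\beta,B+t)-\psi_{\sss N}(\beta,B)$, pass to the limit via Theorem~\ref{term_lim}(i), and identify $c'(0)=M$. The only difference is presentational: the paper invokes Ellis's general equivalence (Theorem~\ref{II.6.3}) as a black box, whereas you unpack the Chernoff--Markov step explicitly; your treatment of the $B=0$ case via evenness and convexity is also slightly more careful than the paper's, which simply appeals to differentiability of the pressure.
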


\noindent
To see fluctuations of the total spin, one needs to rescale $S_{\sss N}-NM_{\sss N}$ by $\sqrt{N}$. In this case, restricting to the set  $\mathcal{U}^{\mathrm{qu}}$ of parameters  $(\beta,B)$ such that there exits a unique Gibbs measure \cite{MMS}, one obtains a Gaussian random variable in the limit $N\to\infty$. We deduce the random quenched CLT in the next theorem:

\begin{theorem}[Random quenched CLT]
\label{random-CLT}
\col{Let   $\left(G_{\sss N}\right)_{N\geq1}$ be a sequence of random graphs that are locally tree-like, 
uniformly sparse, and with asymptotic degree distribution with strongly finite mean. 
Then, for  all $\left(\beta, B\right) \in \mathcal{U}^{\mathrm{qu}}$} 
	\begin{equation}\nonumber
	\frac{S_{\sss N} - N M_{\sss N} }{\sqrt{N}} \; 
	\stackrel{{\cal D}}{\longrightarrow} \; \mathcal{N}\left(0,\chi\right), 
	\qquad \mbox{w.r.t.} \;\; \m, \; \quad \quad \mbox{as}\; \; N \rightarrow \infty,
	\end{equation}
where $\chi=\chi(\beta,B)$ is defined in \eqref{lim_susc} and $\mathcal{N}\left(0,\chi\right)$ denotes a centered Gaussian random variable with variance $\chi$.
\end{theorem}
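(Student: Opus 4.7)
The plan is to follow the strategy of Ellis for the lattice Ising model \cite{El}: compute the logarithmic moment generating function of the centered rescaled total spin under $\m$ via the pressure, show it converges to the Gaussian log-MGF $\tfrac{1}{2}t^2\chi$, and invoke a continuity theorem. Observe first that the tilt $tS_{\sss N}/\sqrt{N}$ is absorbed into a shift of the external field $B$:
\begin{equation*}
\m\!\left(\e^{tS_{\sss N}/\sqrt{N}}\right)=\frac{Z_{\sss N}(\beta,B+t/\sqrt{N})}{Z_{\sss N}(\beta,B)}=\e^{N[\psi_{\sss N}(\beta,B+t/\sqrt{N})-\psi_{\sss N}(\beta,B)]},
\end{equation*}
so that the scaled cumulant generating function of the centered variable is
\begin{equation*}
c_{\sss N}(t):=\log\m\!\left(\e^{t(S_{\sss N}-NM_{\sss N})/\sqrt{N}}\right)=N\bigl[\psi_{\sss N}(\beta,B+t/\sqrt{N})-\psi_{\sss N}(\beta,B)\bigr]-\sqrt{N}\,t\,M_{\sss N}.
\end{equation*}
Using $\partial_B\psi_{\sss N}=M_{\sss N}$ and $\partial_B^2\psi_{\sss N}=\chi_{\sss N}$, this is a convex function of $t$ satisfying $c_{\sss N}(0)=c_{\sss N}'(0)=0$ and $c_{\sss N}''(t)=\chi_{\sss N}(\beta,B+t/\sqrt{N})$.

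The second step is to establish the a.s.\ pointwise convergence $c_{\sss N}(t)\to \tfrac{1}{2}t^2\chi(\beta,B)$ for every $t\in\R$. The integral-remainder form of Taylor's theorem gives
\begin{equation*}
c_{\sss N}(t)=t^2\int_0^1(1-u)\,\chi_{\sss N}\!\left(\beta,B+\tfrac{tu}{\sqrt{N}}\right)\dint u.
\end{equation*}
By Theorem \ref{term_lim}(iv), $\chi_{\sss N}(\beta,B')\to\chi(\beta,B')$ almost surely at every fixed $B'\in\mathcal{U}^{\mathrm{qu}}$. If one shows that the integrand converges a.s.\ to $\chi(\beta,B)$ in a way that permits passage to the limit (for instance by dominated convergence together with continuity of $\chi(\beta,\cdot)$ on the open set $\mathcal{U}^{\mathrm{qu}}$), the integral collapses to $\chi(\beta,B)/2$ and $c_{\sss N}(t)\to\tfrac{1}{2}t^2\chi(\beta,B)$. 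Weak convergence then follows via Curtiss' theorem (equivalently, Lévy continuity applied to the corresponding characteristic functions), delivering $(S_{\sss N}-NM_{\sss N})/\sqrt{N}\convd\mathcal{N}(0,\chi)$ under $\m$.

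The main obstacle is the middle step: upgrading the pointwise-in-$B'$ convergence $\chi_{\sss N}(\beta,B')\to\chi(\beta,B')$ to convergence at the $N$-dependent evaluation points $B+tu/\sqrt{N}$, uniformly in $u\in[0,1]$. My intended approach is to exploit the convexity of $B\mapsto\psi_{\sss N}(\beta,B)$, which promotes the pointwise convergence $\psi_{\sss N}\to\psi$ from Theorem \ref{term_lim}(i) to uniform convergence on compact subsets of $\mathcal{U}^{\mathrm{qu}}$, and then to sandwich $\chi_{\sss N}(\beta,B+tu/\sqrt{N})$ between second-difference quotients of $\psi_{\sss N}$ of the form
\begin{equation*}
\frac{\psi_{\sss N}(\beta,B+h)+\psi_{\sss N}(\beta,B-h)-2\psi_{\sss N}(\beta,B)}{h^2},
\end{equation*}
which, by convexity plus uniform convergence of $\psi_{\sss N}$, tend to $\chi(\beta,B)$ as first $N\to\infty$ and then $h\to 0$. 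This Ellis-Newman-type convex-function argument is precisely where the hypothesis $(\beta,B)\in\mathcal{U}^{\mathrm{qu}}$ enters in a crucial way, ensuring the $C^2$ smoothness of the limit pressure needed for the second-order term in the expansion to be identified with $\chi(\beta,B)$.
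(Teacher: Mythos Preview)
Your overall strategy is the same as the paper's: express the log-MGF of $(S_{\sss N}-NM_{\sss N})/\sqrt{N}$ as a second-order Taylor remainder in the field and show that the second derivative $c_{\sss N}''(t_{\sss N})=\chi_{\sss N}(\beta,B+t_{\sss N})$ converges to $\chi(\beta,B)$ along $t_{\sss N}\to 0$. Whether you use Lagrange remainder (as the paper does) or the integral remainder (as you do) is immaterial.

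The gap is in your ``middle step''. Convexity of $B\mapsto\psi_{\sss N}(\beta,B)$ alone does \emph{not} let you sandwich $\chi_{\sss N}(\beta,B')$ between second-difference quotients of $\psi_{\sss N}$. For a convex $C^2$ function, the second-difference quotient equals a weighted \emph{average} of the second derivative over $[B-h,B+h]$; it gives no pointwise upper or lower bound on $\chi_{\sss N}$ at a specific nearby field. So from convexity of $\psi_{\sss N}$ you get (via the standard convex-function lemma) only that $M_{\sss N}(\beta,B_{\sss N})\to M(\beta,B)$ along $B_{\sss N}\to B$, i.e.\ control of the \emph{first} derivative, not the second.

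What closes the gap is the GHS inequality: for $B\ge 0$ it makes $B\mapsto M_{\sss N}(\beta,B)=c_{\sss N}'$ \emph{concave}, so $-c_{\sss N}'$ is convex on $[-B,\infty)$. You can then apply the convex-function derivative-convergence lemma a \emph{second} time, now to $-c_{\sss N}'$, and conclude $c_{\sss N}''(t_{\sss N})\to c''(0)=\chi(\beta,B)$ for any $t_{\sss N}\searrow 0$. This is exactly how the paper proceeds (its Lemma~\ref{concavity} and Proposition~\ref{propp}). Note also that for $B=0$ the concavity interval degenerates to $[0,\infty)$, and the paper handles this by a separate one-sided extension of $-c_{\sss N}'$ to a convex function on all of $\mathbb{R}$; your argument would need the same modification, since $\mathcal{U}^{\mathrm{qu}}$ includes the segment $B=0,\ 0<\beta<\beta_c^{\mathrm{qu}}$.
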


\subsection{LLN and CLT in averaged quenched setting}
\label{sec-AQ-setting}
In the averaged quenched setting, after presenting the Weak Law of Large Numbers, we formulate a Central Limit Theorem  for  $\CMNtwo$ and $\CMNonetwo$ models.\\ From the $\CMNonetwo$ example we see that while averaging with respect to the graph measure $\q$ does not change $\beta_c^{\mathrm{qu}}$ nor $\psi (\beta, B)$, (cf. Corollary \ref{corol_avquetermlim}),  it does change the variance of the limiting Gaussian distribution of the total spin since, when passing from random to averaged quenched measure, the fluctuations of the graph are taken into account. 
Thus, Theorem \ref{CLT_an2} shows that the total spin rescaled by the square root of the volume also has fluctuations with respect to the random graph measure. Those fluctuations, quantified by $\sigma_{\sss G}^{2}$ in Theorem \ref{CLT_an2}, are in turn forced by the unequal degrees in $\CMNonetwo$.

\begin{theorem}[Averaged quenched WLLN]
\label{wlln_a}
\col{Assume that the law $Q_N$ of the random graph is such that 
almost all sequences $\left(G_{\sss N}\right)_{N\geq1}$ are locally tree-like,
uniformly sparse, and with asymptotic degree distribution with strongly finite mean.
%
Then, for all $\left(\beta, B\right) \in \mathcal{U}^{\mathrm{qu}}$}
	\begin{equation}\nonumber
	\frac{S_{\sss N}}{N} \stackrel{\mathbb{P}}{\longrightarrow} M(\beta, B)  
	\qquad \mbox{w.r.t.} \;\; \p, \; 	\quad \mbox{as} \; \; N \rightarrow \infty
	\end{equation}
where $ \stackrel{\mathbb{P}}{\longrightarrow} $ denotes convergence in probability,
i.e., for all $\varepsilon >0$,
	\be
	\lim_{N \rightarrow \infty} \p\left(\left|\frac{S_{\sss N}}{N} - M(\beta, B) \right| > \varepsilon \right) 
	= 0.
	\ee
\end{theorem}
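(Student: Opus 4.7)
The plan is to deduce the averaged quenched WLLN from the stronger random quenched SLLN of Theorem~\ref{random_que_SLLN} via a bounded convergence argument. By construction, the averaged quenched measure is obtained by averaging the random Boltzmann--Gibbs measure over the graph law $\q$, so for any measurable $A\subset\Omega_{\sss N}$ one has $\p(A)=\q\bigl(\m(A)\bigr)$.

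Concretely, I would fix $\varepsilon>0$ and consider the $[0,1]$-valued random variable (on the probability space supporting the graph sequence)
\[
f_{\sss N}(G_{\sss N}) \;:=\; \m\!\left(\left|\frac{S_{\sss N}}{N} - M(\beta, B)\right| > \varepsilon\right).
\]
By Theorem~\ref{random_que_SLLN}, for almost every realization of $(G_{\sss N})_{N\ge 1}$ there exists $L=L(\varepsilon)>0$ such that $f_{\sss N}(G_{\sss N})\le \e^{-NL}$ for all sufficiently large $N$; in particular $f_{\sss N}(G_{\sss N})\to 0$ almost surely. Since $f_{\sss N}$ is uniformly bounded by $1$, the Bounded Convergence Theorem gives
\[
\p\!\left(\left|\frac{S_{\sss N}}{N} - M(\beta, B)\right| > \varepsilon\right) \;=\; \q(f_{\sss N}) \;\longrightarrow\; 0,
\]
which is precisely the statement of the theorem.

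The only mild technical issue is that the graphs $G_{\sss N}$ need to be realized on a common probability space in order for the almost sure convergence to make sense and for bounded convergence to apply verbatim; this is implicit in the standing hypothesis that \emph{almost every} sequence $(G_{\sss N})_{N\ge 1}$ drawn from the $(\q)_{N\ge 1}$ is locally tree-like, uniformly sparse, and has asymptotic degree distribution with strongly finite mean. Apart from this, there is no substantive obstacle: once the random quenched exponential convergence is available, the averaged quenched WLLN follows essentially for free, with no model-specific computation and no restriction to the configuration model required.
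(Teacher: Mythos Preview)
Your proof is correct and follows exactly the paper's approach: write $\p\bigl(|S_{\sss N}/N-M|>\varepsilon\bigr)=\q\bigl[\m(|S_{\sss N}/N-M|>\varepsilon)\bigr]$, use Theorem~\ref{random_que_SLLN} to obtain almost sure convergence of the inner probability to zero, and conclude via bounded convergence. Your remark on the common probability space is a reasonable clarification but does not depart from the paper's argument.
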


\noindent
The proof of this theorem follows from the random quenched SLLN. Indeed,  by the definition of the average quenched measure, 
	\be
	\p\left(\left|\frac{S_{\sss N}}{N} - M(\beta, B) \right| > \varepsilon \right)\, 
	= \, \q\left[\m \left(\left|\frac{S_{\sss N}}{N} - M(\beta, B) \right| > \varepsilon \right)\right],
	\ee
that, combined with Theorem \ref{random_que_SLLN}, i.e.  $\lim_{N \rightarrow \infty} \m \left(\left|\frac{S_{\sss N}}{N} - M(\beta, B) \right| > \varepsilon \right)= 0$ and the Bounded Convergence Theorem leads to the result. We can only prove a {\em weak} LLN, since exponential convergence as in Theorem \ref{random_que_SLLN} does not hold. See Section \ref{sec-no-expon-conv-aq} for more details.

\begin{theorem}[Averaged quenched CLT for $\CMNtwo$]
\label{CLT_an1}
\col{Let  $\gs$ be sequences  of $\CMNtwo$ graphs}. Then, \col{for any $\beta \ge 0$} and
$B\in\mathbb{R}$
	\begin{equation}\nonumber
	\frac{S_{\sss N} - \p \left(S_{\sss N}\right) }{\sqrt{N}} \; 
	\stackrel{{\cal D}}{\longrightarrow} \; \mathcal{N}\left(0, \chi\right), 
	\qquad \mbox{w.r.t.} \;\; \p, \; \quad \mbox{as} \; \; N \rightarrow \infty,
\end{equation}
where $\chi=\chi(\beta, B)$ is the thermodynamic limit of the susceptibility \eqref{lim_susc} specialized to the Ising model on $\CMNtwo$. Moreover, $\chi(\beta, B)$ is also equal to the susceptibility of the one-dimensional Ising model (see Sec \ref{Sec_Ising}), i.e.,
	\be\label{chi_ising}
	\chi(\beta, B)=\chi^{d=1}(\beta,B) = \frac{\cosh(B) \e^{-4\beta}}{(\sinh(B)+\e^{-4\beta})^{3/2}}\; .
	\ee
\end{theorem}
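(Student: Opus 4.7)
The key structural fact is that every realisation of $\CMNtwo$ is a vertex-disjoint union of cycles $C_1,\ldots,C_{R_N}$ of lengths $k_1,\ldots,k_{R_N}$ with $\sum_{j=1}^{R_N} k_j=N$ (length-$1$ ``cycles'' corresponding to self-loops and length-$2$ to double edges). Conditionally on the graph, the Ising measure $\m$ factorises across cycles, and on each cycle it is precisely the one-dimensional Ising model with periodic boundary conditions, whose partition function is $Z_k(\beta,B)=\lambda_+(B)^k+\lambda_-(B)^k$, with $\lambda_+(B)>\lambda_-(B)>0$ the eigenvalues of the standard $2\times 2$ transfer matrix. My plan is to compute the characteristic function of $(S_N-\p(S_N))/\sqrt N$ by first conditioning on $G_N$, then exploiting this exact solution, and finally averaging over the graph.

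Since a shift $B\mapsto B+s$ in $Z_k$ realises the MGF of the cycle total spin, setting $s=\mathrm{i} t/\sqrt N$ yields
\[
\log\m\!\bigl(\e^{\mathrm{i} t S_N/\sqrt{N}}\bigr)
=\sum_{j=1}^{R_N}\log\frac{Z_{k_j}(\beta,B+\mathrm{i} t/\sqrt{N})}{Z_{k_j}(\beta,B)}
=N\bigl[\log\lambda_+(B+\mathrm{i} t/\sqrt{N})-\log\lambda_+(B)\bigr]+\varepsilon_N(G_N),
\]
where $\varepsilon_N(G_N)$ collects the $(\lambda_-/\lambda_+)^{k_j}$ corrections and obeys $|\varepsilon_N(G_N)|\le C\sum_j r^{k_j}$ with $r<1$ uniform in a small complex neighbourhood of $B$. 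A second-order Taylor expansion of the leading term in $t/\sqrt N$ produces $\mathrm{i} t\sqrt{N}\,m_\infty-\tfrac{t^2}{2}\chi^{d=1}(\beta,B)+O(N^{-1/2})$, where $m_\infty=[\log\lambda_+]'(B)$ and $\chi^{d=1}(\beta,B)=[\log\lambda_+]''(B)$ is the one-dimensional susceptibility of \eqref{chi_ising}. The analogous real computation shows $\m(S_N)=\sum_j k_j m_{k_j}=Nm_\infty+O(\sum_j r^{k_j})$ and hence $\p(S_N)=Nm_\infty+O(1)$, so centring by $\p(S_N)$ instead of $Nm_\infty$ only contributes an $O(1/\sqrt N)$ phase. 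Combining these,
\[
\m\!\left(\exp\!\left(\mathrm{i} t\,\frac{S_N-\p(S_N)}{\sqrt{N}}\right)\right)
=\exp\!\left(-\tfrac{t^2}{2}\chi^{d=1}(\beta,B)\right)\bigl(1+o_{\mathbb{P}}(1)\bigr),
\]
and because the modulus of the left-hand side is at most $1$, bounded convergence lets us pass the $Q_N$-average through, yielding $\p(\e^{\mathrm{i} t(S_N-\p(S_N))/\sqrt{N}})\to \e^{-t^2\chi^{d=1}(\beta,B)/2}$; L\'evy's continuity theorem then concludes.

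The one technical point demanding care is showing that $\varepsilon_N(G_N)$ and the $O(1/\sqrt N)$ recentring are $o_{\mathbb{P}}(1)$ strongly enough for the dominated-convergence step. This reduces to the classical fact that, in $\CMNtwo$, the number of cycles of each fixed length $k$ is asymptotically $\mathrm{Poisson}(1/(2k))$, so that $\sum_j r^{k_j}$ is tight under $Q_N$. Together with the analyticity of $\lambda_+$ in a complex neighbourhood of $B$ and the uniform bound $|\lambda_-/\lambda_+|<1$, this is all that is needed, and no estimate beyond the standard short-cycle statistics of $\CMNtwo$ is required. I expect bookkeeping the short-cycle contributions to $\varepsilon_N$ to be the most tedious, though conceptually routine, ingredient of the proof; the fact that the graph fluctuations do not inflate the variance beyond $\chi^{d=1}$ is the manifestation at this level of the absence of a macroscopic component-size fluctuation in the 2-regular case, in contrast with $\CMNonetwo$.
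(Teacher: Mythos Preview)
Your route is sound and genuinely different from the paper's. The paper works with the \emph{real} moment generating function $\overline{c}_{\sss N}(t)=\frac{1}{N}\log \p[\e^{tS_{\sss N}}]$, applies a Lagrange-remainder Taylor expansion to obtain $\frac{t^2}{2}\overline{c}_{\sss N}''(t_{\sss N})$ for some intermediate $t_{\sss N}\in[0,t/\sqrt N]$, and then proves a dedicated Lemma~\ref{limc2} showing that the correction terms $I_{\sss N},II_{\sss N},III_{\sss N},D_{\sss N}$ in $\overline{c}_{\sss N}''(t_{\sss N})$ vanish; this requires computing explicit bounds on $\q$-expectations of products over cycles and invoking the representation $K^t_{\sss N}=\sum_j I_j$ with independent Bernoullis. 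You instead fix $G_{\sss N}$, compute the quenched \emph{characteristic} function via the transfer matrix, show it converges in $\q$-probability to the Gaussian characteristic function, and then use the trivial bound $|\cdot|\leq 1$ to pass the $\q$-average through by dominated convergence. This is more elementary: it avoids the second-derivative bookkeeping of Lemma~\ref{limc2} entirely, and the dominated-convergence step is free. The paper's approach, on the other hand, stays parallel to its random-quenched proof and works directly at the level of the averaged measure.

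There is one gap to close. Your bound $|\varepsilon_{\sss N}(G_{\sss N})|\leq C\sum_j r^{k_j}$ is correct but too crude for what you need: tightness of $\sum_j r^{k_j}$ only gives $\varepsilon_{\sss N}=O_{\mathbb P}(1)$, not $o_{\mathbb P}(1)$, because $\sum_j r^{k_j}$ converges in law to a \emph{non-degenerate} random variable (a sum of independent Poissons weighted by $r^k$). With $\varepsilon_{\sss N}$ merely tight, $\e^{\varepsilon_{\sss N}}$ is a random multiplicative factor and the quenched characteristic function does not converge to a constant. The fix is to keep the $1/\sqrt N$ from the increment: by the mean-value inequality for the analytic map $s\mapsto\log(1+r_{B+s}^{k_j})$ on a small disc around $0$ you get
\[
|\varepsilon_{\sss N}(G_{\sss N})|\;\leq\;\frac{C|t|}{\sqrt N}\sum_{j} k_j\,r^{k_j}\;\leq\;\frac{C'|t|}{\sqrt N}\,K^t_{\sss N},
\]
and since $K^t_{\sss N}/\sqrt N\to 0$ in $\q$-probability (indeed $\q(K^t_{\sss N})\sim\frac12\log N$, exactly as the paper uses), this yields $\varepsilon_{\sss N}=o_{\mathbb P}(1)$. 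With that correction your argument closes cleanly.
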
 

\noindent
The identification of the variance $\chi(\beta, B)$ as $\chi^{d=1}(\beta,B)$ also holds for the random quenched setting in Theorem \ref{random-CLT}, so that we see that, for $\CMNtwo$, the averaged quenched and random quenched variances in the CLT are equal. We next investigate a case where this is not true:

\vspace{0.4cm}
\begin{theorem}[Averaged quenched CLT for $\CMNonetwo$]
\label{CLT_an2}
\col{Let  $\gs$ be sequences  of  $\CMNonetwo$  graphs}.  Then,  \col{for any $\beta \ge 0$} and
$B\in\mathbb{R}$
	\begin{equation}\nonumber
	\frac{S_{\sss N} - \p \left(S_{\sss N}\right) }{\sqrt{N}} \; 
	\stackrel{{\cal D}}{\longrightarrow} \; \mathcal{N}\left(0,\sigma_{\mathrm{aq}}^2\right), 
	\qquad \mbox{w.r.t.} \;\; \p, \; \quad \mbox{as} \; \; N \rightarrow \infty,
	\end{equation}
where $\sigma_{\mathrm{aq}}^2=\chi +\sigma_{\sss G}^2$, with $\chi=\chi(\beta, B)$ the thermodynamic limit of the susceptibility of the Ising model on $\CMNonetwo$ (whose explicit expression is given in \eqref{var12-random-quenched} below) and $\sigma_{\sss G}^2=\sigma_{\sss G}^2(\beta,B)$ a positive number that is defined in \eqref{sigmag2} below.
\end{theorem}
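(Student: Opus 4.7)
\textbf{Proof proposal for Theorem \ref{CLT_an2}.} The plan is to compute the characteristic function of $(S_{\sss N}-\p(S_{\sss N}))/\sqrt{N}$ under $\p$ and show convergence to $\exp(-t^2(\chi+\sigma_{\sss G}^2)/2)$. The key structural fact is that $\CMNonetwo$ decomposes a.s.\ into a disjoint union of connected components, each of which is either a path (with two degree-$1$ endpoints and degree-$2$ internal vertices) or a cycle (all vertices of degree $2$). Consequently, conditionally on the graph $G_{\sss N}$, the Ising measure $\m$ factorizes as a product of independent one-dimensional Ising models, one for each component, and $S_{\sss N}=\sum_c S_{\sss N}^{(c)}$ is a sum of independent random variables under $\m$. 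Thus
\begin{equation}
\p\bigl(\e^{it(S_{\sss N}-\p(S_{\sss N}))/\sqrt{N}}\bigr)
=\q\Bigl(\e^{-it\p(S_{\sss N})/\sqrt{N}}\,\prod_c\m\bigl(\e^{itS_{\sss N}^{(c)}/\sqrt{N}}\bigr)\Bigr).
\end{equation}

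Taylor-expanding the logarithm of each factor in powers of $t/\sqrt{N}$ and using uniform bounds on the moments and cumulants of $S^{(c)}$ for a one-dimensional Ising chain/cycle of length $|c|$ (the third cumulant grows at most linearly in $|c|$, so the remainder summed over components is $O(N/N^{3/2})=o(1)$), I would obtain
\begin{equation}
\log\prod_c\m\bigl(\e^{itS_{\sss N}^{(c)}/\sqrt{N}}\bigr)
=\frac{it}{\sqrt{N}}\,\m(S_{\sss N})-\frac{t^2}{2}\,\chi_{\sss N}+o_{\sss \prob}(1),
\end{equation}
where $\chi_{\sss N}=\frac{1}{N}\sum_c\text{Var}_{\m}(S_{\sss N}^{(c)})\to\chi$ a.s.\ by Theorem \ref{term_lim}(iv) specialized to $\CMNonetwo$ (using the explicit 1D-Ising expression of \eqref{chi_ising} together with the asymptotic fraction of vertices belonging to each type of component). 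Since $\chi_{\sss N}$ and the remainder are uniformly bounded, the bounded convergence theorem reduces the problem to proving
\begin{equation}
\q\Bigl(\e^{it\bigl(\m(S_{\sss N})-\q(\m(S_{\sss N}))\bigr)/\sqrt{N}}\Bigr)\longrightarrow \e^{-t^2\sigma_{\sss G}^2/2},
\end{equation}
i.e., a central limit theorem under $\q$ for the graph-dependent quantity $\m(S_{\sss N})=\sum_c \m(S_{\sss N}^{(c)})$, which is the total quenched magnetization viewed as a functional of $G_{\sss N}$.

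For this graph CLT I would exploit that $\m(S_{\sss N}^{(c)})$ depends only on the length $|c|$ and type (path/cycle) of the component, and these are explicitly computable from the 1D transfer matrix. Writing $\m(S_{\sss N})=\sum_{k\ge 1}N_k^{\sss P}\,g_{\sss P}(k)+\sum_{k\ge 1}N_k^{\sss C}\,g_{\sss C}(k)$, with $N_k^{\sss P},N_k^{\sss C}$ the numbers of paths/cycles of length $k$ in $\CMNonetwo$ and $g_{\sss P},g_{\sss C}$ deterministic functions that decay to constants exponentially fast in $k$, I would use the standard description of $\CMNonetwo$ via uniform pairing of half-edges: traversing a chain from a degree-$1$ endpoint, each new half-edge is degree-$1$ with probability $\asymp (1-p)/(1+p)$, so the chain lengths are asymptotically i.i.d.\ geometric, and the total number of cycles is $O_{\sss\prob}(\log N)$ so cycles contribute $o_{\sss\prob}(\sqrt{N})$. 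A classical CLT applied to the (approximately) i.i.d.\ sequence of chain contributions then yields the Gaussian limit, and the variance $\sigma_{\sss G}^2$ is identified as $\frac{1-p}{2}\,\text{Var}(g_{\sss P}(L))$ where $L$ is the limiting geometric chain length. This is the content of \eqref{sigmag2}.

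The main obstacle is in the last step: the chain lengths $(L_j)$ are \emph{not} exactly i.i.d.\ because half-edges are sampled without replacement from a finite pool, and one must quantify the coupling error to a true i.i.d.\ geometric sequence with an error going to zero fast enough to preserve the $\sqrt{N}$ scaling. I would handle this either by an explicit coupling of the sequential half-edge exploration to an i.i.d.\ geometric sequence (controlling the total variation distance by $O(\log^2 N/N)$ or similar), or by a direct second-moment computation that uses the exchangeability of the half-edge pairing to bound covariances between distinct chains. Verifying the uniformity of the cumulant bounds used in the expansion \eqref{chi_ising} for unbounded chain lengths, and explicitly matching the resulting variance with the $\sigma_{\sss G}^2(\beta,B)$ stated in \eqref{sigmag2}, are the remaining computational steps.
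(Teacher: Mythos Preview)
Your reduction to a graph CLT is sound and mirrors the paper's strategy: decompose into components, Taylor-expand (you use the characteristic function, the paper uses the moment generating function via ratios of partition functions), show the conditional fluctuations contribute $\chi$, and reduce to a CLT under $\q$ for the graph-dependent quantity $\m(S_{\sss N})$. The control of cycles as $o_{\sss \prob}(\sqrt{N})$ is also correct.

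The genuine gap is your identification $\sigma_{\sss G}^2=\tfrac{1-p}{2}\,\mathrm{Var}\,(g_{\sss P}(L))$ with $L$ the limiting geometric line length. This is wrong, and the i.i.d.\ coupling you propose cannot repair it. The $n_1/2$ line lengths satisfy $\sum_j L_j=N-R_{\sss N}$ where $R_{\sss N}$ (the total number of vertices in cycles) is $O_{\sss \prob}(1)$; hence $\sum_j L_j$ is essentially deterministic at the $\sqrt{N}$ scale. Under an i.i.d.\ geometric model $\sum_j L_j$ would fluctuate by order $\sqrt{N}$, so the dependence among the $L_j$ is \emph{not} a lower-order perturbation: it is exactly a $\sqrt{N}$-scale linear constraint, and it changes the limiting variance to
\[
\sigma_{\sss G}^2=\frac{1-p}{2}\Big[\mathrm{Var}\,(g_{\sss P}(L))-\frac{\mathrm{Cov}(g_{\sss P}(L),L)^2}{\mathrm{Var}(L)}\Big],
\]
the conditional variance of $g_{\sss P}(L)$ given the sum constraint. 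This is precisely what the paper's formula \eqref{sigmag2} encodes through the covariance matrix $H(\alpha)$: the off-diagonal negative term $-\tfrac{(r-2-\alpha)(t-2-\alpha)}{\alpha(1+\alpha)}$ in \eqref{covmdPB} is the constraint correction (note $\alpha(1+\alpha)=\mathrm{Var}(L-2)$). With your $g_{\sss P}(k)=kM+h(k)$ the correction removes the spurious $M^2\mathrm{Var}(L)$ term entirely, leaving $\mathrm{Var}(h(L))-\mathrm{Cov}(h(L),L)^2/\mathrm{Var}(L)$, which matches the paper.

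The paper does not derive this graph CLT from scratch: it invokes a joint CLT for component counts due to de Panafieu and Broutin (Theorem~\ref{cltPB}), then shows tori are negligible at this scale (Lemma~\ref{cltLambda}), and handles the tail of the infinite sum over lengths by a H\"older/truncation argument together with an inductive bound on the moment generating function of a single $N_\ell$. If you want to avoid citing \cite{dPB}, you would need a CLT for exchangeable geometric-like variables \emph{conditioned on their sum}, not a coupling to unconstrained i.i.d.\ variables; the latter yields the wrong variance.
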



\subsection{Discussion}
\label{discussio}

\subsubsection{On the rate of convergence for the laws of large numbers}
\label{sec-no-expon-conv-aq}
In the random quenched setting, we prove that the finite volume total spin $S_{\sss N}$ normalized by $N$ converges in probability exponentially fast to the magnetization $M$, which is a non-random quantity.
For the averaged quenched setting, instead, it is easy to give examples for which the convergence at exponential rate is lost. Let us consider the sequence $\gs$ given by
	\begin{equation} 
	G_{\sss N}\,=\, \left\{\begin{array}{ll} \CMNreg \quad \quad \quad \; 
	\mbox{with probability} \; 1 - \frac{1}{N},\\ 
	K_{\sss N} \quad \quad \quad \quad \quad \; \; \mbox{with probability} \; \frac{1}{N}, 
	\end{array}\right.  
	\end{equation}
where $\CMNreg$ is the $r$-regular random graph ($r\in\mathbb{N}$), i.e., a configuration model with degree  sequence $d_{i}=r$ for all $i\in[N]$, and $K_{\sss N}$ is the complete graph. The infinite-volume magnetization $M$ of the Ising model \eqref{bg} on the sequence $\gs$  coincides with the limiting  magnetization of $\CMNreg$, hence $M\ne 1$ \col{for any $\beta < \infty$}.  On the other hand, the infinite-volume magnetization  on the sequence of complete graphs $(K_{\sss N})_{N\ge 1}$ is 1 and, thus, there exists $\varepsilon > 0$ such that 
	\[\p\left(\left| \left.\frac{S_{\sss N}}{N} - M\right| > \varepsilon \; \right| \; 
	G_{\sss N} = K_{\sss N} \right)=1 + o(1),
	\]
as $N\to \infty$. Therefore,
	\begin{align}
	\p\left(\left|\frac{S_{\sss N}}{N} - M\right| > \varepsilon \right) 
	&= \p\left(G_{\sss N} = K_{\sss N}\right)\cdot 
	\p\left(\left| \left.\frac{S_{\sss N}}{N} - M\right| > \varepsilon \; 
	\right| \; G_{\sss N} = K_{\sss N} \right)\\ \nonumber
	&\quad+ \p\left(G_{\sss N} = \col{\CMNreg} \right)\cdot
	\p\left(\left| \left.\frac{S_{\sss N}}{N} - M\right| > \varepsilon \; 
	\right| \; G_{\sss N} = \col{\CMNreg} \right) \\ \nonumber
	& \geq \frac{1}{N}\left(1 + o(1)\right),
	\end{align}
which prevents the sequence from converging exponentially fast. 

%

\subsubsection{CLT proof strategy}\label{sec_proof_strategy}
The primary purpose of this paper is to prove Central Limit Theorems in the two different settings introduced in Section \ref{iniz_def}.  The proofs are  different  (in fact they require the control of the  fluctuations of $S_{\sss N}$ with respect to different ensembles), but are based on the same main idea \cite{El}. This idea consists in using the moment generating function of the random variables $\frac{S_{\sss N} - \mathbb{E} \left(S_{\sss N}\right)}{\sqrt{N}}$, where $\mathbb{E}$ is the  average in the chosen measure, and in showing that with respect to the same measure these moment generating functions converge to  those of Gaussian random variables. Whenever possible, this step requires the computation of the variances of the \col {limiting} Gaussian variables.  This is achieved by considering the 
\col{scaled}
\emph{cumulant generating functions} of $S_{\sss N}$, given by
	\begin{equation}
	\label{c_N}
	c_{\sss N}(t) = \frac{1}{N} \log{\m \left[\exp\left(t S_{\sss N}\right)\right]}\;
	\end{equation}
in the random quenched setting and
	\begin{equation}
	\label{tildec_N}
	\overline{c}_{\sss N}(t)= \frac{1}{N} \log{\p \left[\exp\left(t S_{\sss N}\right)\right]}
	\end{equation}
in the averaged quenched setting. By taking the  second derivative of  \eqref{c_N} and \eqref{tildec_N}
and evaluating it in zero one obtains, respectively, $\text{Var}_{\m} \left(\frac{S_{\sss N}}{\sqrt{N}}\right)$ and $\text{Var}_{\p} \left(\frac{S_{\sss N}}{\sqrt{N}}\right)$. The crucial argument in the proof of the theorems is to show the existence of these variances in the limit $N\to\infty$. This in turn is a consequence of the existence of the limit of the sequences $(c_{\sss N}(t))_{N\ge1}$ and $(\overline{c}_{\sss N}(t))_{N\ge 1}$ and the sequences of their second derivatives for $t=t_{\sss N}=o(1)$.

While the existence of the limit $c(t) := \lim_{N\to\infty} c_{\sss N}(t)$ can be established for the Ising model on locally tree-like random graphs as a simple consequence of  the existence of the random quenched pressure, the existence of the limit $\overline{c}(t) := \lim_{N\to\infty} \overline{c}_{\sss N}(t)$ is more challenging. In particular, it requires detailed knowledge not only of the typical local structure of the graph around a random vertex, but also of the fluctuations around that structure. Moreover, the general argument of \cite{El} relies on concavity of the first derivatives of the cumulant generating functions. This can be achieved for $c_{\sss N}(t)$, i.e., in the random quenched setting, thanks to the GHS inequality, which holds for the ferromagnetic Boltzmann-Gibbs  measures $\mu_{G_{\sss N}}$. On the other hand, the GHS inequality is in general not known for the averaged quenched measure. 

\col{Thus in the averaged quenched setting} we focus on two specific models, 
i.e. the  $\CMNtwo$ and $\CMNonetwo$ random graphs, which allow for explicit computations of the relevant quantities, even in the averaged quenched setting.  In fact in these cases, since the typical structure in the graphs  are cycles (for $\CMNtwo$) and line and cycles (for $\CMNonetwo$),  the \col{
averaged quenched pressure of the Ising model on these graphs can be expressed in terms of the
the Ising model pressure $\psi^{d=1}(\beta, B)$ of the  one-dimensional nearest-neighbour Ising model}. 
\col{It turn out however that whereas
the averaged quenched  pressure  of  the regular random graph  $\CMNtwo$  
exactly equals $\psi^{d=1}(\beta, B)$,  in the case of $\CMNonetwo$ the pressure  is more involved, see \eqref{pressQcaso2}.} Indeed, besides $\psi^{d=1}(\beta, B)$, a new term appears that depends on a set of random variables $(p^{\sss (N)}_{\ell})_{\ell\geq 1}$ whose value 
depends on the realization of the random graph. More precisely, $N p^{\sss (N)}_{\ell}$ is \col{the} number of  lines of length $\ell$ in the graph (the cycles give a vanishing contribution in the thermodynamic limit).  Then, in order to prove the CLT for $\CMNonetwo$, it is of pivotal importance to control the fluctuations of the random variables  $p^{\sss (N)}_{\ell}$ in the thermodynamic limit. This result is obtained in \cite{dPB}, where it is proven that the joint limit law of the number of connected components in a graph with vertices of degrees 1 and 2 is Gaussian. Relying on this result, we can complete our proof of the averaged quenched CLT for $\CMNonetwo$.

\subsubsection{Differences between random quenched and averaged quenched setting}
\label{diff_rq_aq}
We explain here the distinction between the random quenched and the averaged quenched CLTs. The variance $\text{Var}_{\p} \left(\frac{S_{\sss N}}{\sqrt{N}}\right)$, expressing the fluctuations of $S_{\sss N}/{\sqrt{N}}$ with respect to the averaged quenched measure, has two contributions. The first is given by the average over random graphs of the conditional 
variance of $S_{\sss N}/{\sqrt{N}}$ with respect to the Boltzmann-Gibbs measure 
(the conditioning is given by the graph realization);
the second contribution is given by the variance of conditional mean  of $S_{\sss N}/{\sqrt{N}}$ (see (\ref{cond-var})).

If a CLT with respect to the random quenched measure holds, then the thermodynamic limit
of the first term in the right-hand side of \eqref{cond-var} equals the magnetic susceptibility $\chi$,
which is a self-averaging quantity. It is clear from \eqref{cond-var} that one expects a different variance in the CLT in the averaged quenched case whenever the thermodynamic limit of the second term on  the right-hand side is different from zero.

The analysis that we perform in Section \ref{three} on the configuration model leads us to conjecture
that when the vertex degrees are not all equal, one has that $\sigma^2 := \lim_{N\to\infty} \text{Var}_{\q} \left(\m\left(\frac{S_{\sss N}}{\sqrt{N}}\right)\right)$ is a strictly positive number. On the contrary, we predict that when the degrees are fixed, such as in the $r$-regular random graph, the two limiting variances are equals, thus yielding no distinction between random and averaged quenched central limit theorems. We prove this in the case where $r=2$.

Furthermore, when there are non-vanishing fluctuations of the rescaled total spin with respect to the graph measure, those will be determined by the fluctuations of the degrees distribution of the graph. Those, in turn, are determined by the explicit graph construction. For instance, in the case of the $\CMNonetwo$ random graph with a fixed number of vertices with degree 1 the limiting variance is obtained in \eqref{vargk}. With reference to the alternative construction of the $\CMNonetwo$ graph described in Remark \ref{alt_costr}, where the number of vertices of degree 1 (called $n_1$) follows a Binomial distribution, a different limiting variance would arise, despite the fact that asymptotically the two models have the same degrees distribution. For instance,  the deterministic factor $\frac{1-p}{2}$ in formula \eqref{vargk}, which arises from $n_1/2$, \col{in the alternative construction} would be replaced by a random quantity with non-vanishing fluctuations.

\subsubsection{Violation of CLT at $\beta_c$}
\label{sec-non-classical-limits}
In the case of the random quenched CLT, the variance is given by the susceptibility of the model. If $B=0$ and $\beta= \beta_c^{\mathrm{qu}}$ the variance diverges because the susceptibility becomes infinite at the critical temperature. So, for these parameters, the CLT breaks down and a different scaling of the total spin $S_{\sss N}$ is needed to obtain a non-trivial limiting distribution. For example, in \cite{El}, it is shown that for the Curie-Weiss model, at $B=0, \beta= \beta_c$, the quantity $\displaystyle \frac{S_{\sss N}}{N^{3/4}}$ converges in distribution to the random variable $X$ with density proportional to $\exp{\displaystyle \left(-x^4/12\right)}$. We do not investigate this problem in this paper. We conjecture a similar behavior for the Ising model on locally tree-like random graphs with finite fourth moment of the degree distribution. Those graphs have been shown to be in the same universality class as the Curie-Weiss model, i.e., their critical exponents agree with the mean-field critical exponents of the Curie-Weiss model \cite{DGH2}. We investigate this problem in detail for another class of locally tree-like random graphs, namely, generalized random graphs (recall \cite[Chapter 6]{vdH}) in a forthcoming paper \cite{DGGHP15}. Interestingly, as in \cite{DGH2}, the mean-field behavior breaks down when the fourth moment of the degrees in the random graph becomes infinite. In the latter case, another limit theorem holds for the total spin.

\subsubsection{Low-temperature region}
\label{sec-low-T}
When the graph sequence $\left(G_{\sss N}\right)_{N\ge 1}$ is such that there exists a finite critical inverse temperature $\beta_c$ (cf.\ \eqref{betac}), then Theorems \ref{random_que_SLLN}, \ref{random-CLT}, \ref{wlln_a} do not apply to the low temperature region corresponding to $B=0$, $\beta>\beta_c$. 
For the Ising model on $\mathbb{Z}^d$ \cite{Chuck, LOF} or on the complete graphs \cite{El}, in the low temperature region the law of large number for the empirical sum of the spin breaks down because, in the thermodynamic limit,  the Boltzmann-Gibbs measure becomes a mixture of two pure states $\mu = \frac12 (\mu^+ + \mu^-)$. As a consequence, the empirical magnetization is distributed like the sum of two Dirac deltas at the symmetric values $\pm M^*(\beta)$, with  $M^*(\beta)= \lim_{B\searrow 0} M(\beta,B)$, whereas the CLT with respect to
the Boltzmann-Gibbs measure breaks down. However, by using general properties of ferromagnetic systems (e.g. GKS inequalities)
it is possible to prove a CLT with respect to the measure $\mu^+$, respectively $\mu^-$ \cite{Chuck, LOF}. 
For the Ising model on random graphs it is believed that a similar picture apply.
For instance, for the Ising model on regular random graphs it has been proved in \cite{MMS}
that the low temperature measure is a convex combination of $+$ and $-$ states
and therefore, by appealing to the general results of \cite{Chuck} we conclude
that a CLT \col{holds} in the pure phase. 

\section{Proofs of random quenched results}
\label{two}
In order to prove the Strong Law of Large Numbers in the random quenched setting, we present a preliminary theorem that guarantees exponential convergence, see Definition \ref{def_expconverg},  under general hypotheses.

\subsection{Exponential convergence}
Let ${\mathcal W}= \left(W_n\right)_{n \geq 1}$ be a sequence of random vectors which are defined on probability spaces $\left\{\left(\Omega_n, {\mathcal F}_n, P_n \right)\right\}_{n \geq 1}$ and which take values in $\mathbb R^D$. We define the cumulant generating functions as
	\begin{equation}\label{funz_c}
	c_n(t) = \frac{1}{a_n} \log E_n \left[ \exp(\left\langle  t, W_n \right\rangle)\right], 
	\quad \quad n=1,2,\ldots, \quad t \in \mathbb R^D,
	\end{equation}
where $(a_n)_{n \geq 1}$ is a sequence of positive real numbers tending to infinity, $E_n$ denotes expectation with respect to $P_n$, and $\left\langle -,-\right\rangle$ is the Euclidean inner product on $\mathbb R^D$. We assume that the following hypotheses hold:
	\begin{enumerate}
	\item[(a)] Each function $c_n(t)$ is finite for all $t \in \mathbb R^D$;
	\item[(b)] $c(t) = \lim_{n \rightarrow \infty} c_n(t)$ exists for all $t \in \mathbb R^D$ and is finite.
	\end{enumerate}

\begin{theorem}[Exponential convergence and cumulant generating functions]
\label{II.6.3}
Assume hypotheses $(a)$ and $(b)$. Then the following statements are equivalent:
	\begin{itemize}
	\item[(1)] $W_n/a_n \stackrel{\exp}{\longrightarrow} z_0$;
	\item[(2)] $c(t)$ is differentiable at $t=0$ and $\nabla c(0) = z_0$.
	\end{itemize}
\end{theorem}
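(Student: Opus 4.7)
The statement is a classical Ellis-type equivalence between exponential concentration of $W_n/a_n$ and differentiability of the limiting cumulant generating function $c$ at $t=0$, so the plan is to follow the standard two-step route: $(2) \Rightarrow (1)$ via an exponential Chebyshev bound, and $(1) \Rightarrow (2)$ via a splitting of the moment generating function into a concentration core plus a tail controlled by Cauchy--Schwarz. A useful preliminary observation is that each $c_n$ is convex (being $1/a_n$ times the $\log$ of an MGF), hence $c$ is convex; since $c$ is finite on all of $\mathbb{R}^D$ by hypothesis (b), it is continuous on $\mathbb{R}^D$ and satisfies $c(0)=0$.

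For $(2) \Rightarrow (1)$, I would fix a unit vector $u \in \mathbb{R}^D$ and $\eta > 0$, and apply the exponential Chebyshev inequality with parameter $\lambda u$, $\lambda > 0$, to get
\[
P_n\bigl(\langle u, W_n/a_n - z_0\rangle \geq \eta\bigr) \leq \exp\bigl(-a_n[\lambda\eta - (c_n(\lambda u) - \lambda\langle u, z_0\rangle)]\bigr).
\]
By hypothesis (b) the bracket converges to $\lambda\eta - (c(\lambda u) - \lambda\langle u, z_0\rangle)$, and differentiability of $c$ at $0$ with $\nabla c(0) = z_0$ says that $c(\lambda u) - \lambda\langle u, z_0\rangle = o(\lambda)$ as $\lambda \downarrow 0$. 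Picking $\lambda = \lambda(\eta) > 0$ small enough that the bracket exceeds $\lambda\eta/2$ yields exponential decay for the half-space event in direction $u$. By compactness of the unit sphere I can then select finitely many $u_1,\ldots,u_k$ so that $\{z : |z - z_0| \geq \varepsilon\} \subset \bigcup_i \{z : \langle u_i, z - z_0\rangle \geq \varepsilon/2\}$, and a union bound produces $P_n(|W_n/a_n - z_0| \geq \varepsilon) \leq k\,e^{-a_n L}$ for some $L = L(\varepsilon) > 0$.

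For $(1) \Rightarrow (2)$, the strategy is to show $c(t) - \langle t, z_0\rangle = o(|t|)$ as $t \to 0$, which is exactly differentiability at $0$ with gradient $z_0$. Split the MGF at threshold $\delta > 0$ using $A_n = \{|W_n/a_n - z_0| < \delta\}$. On $A_n$ the integrand lies in $[e^{a_n(\langle t, z_0\rangle - |t|\delta)}, e^{a_n(\langle t, z_0\rangle + |t|\delta)}]$, and since $P_n(A_n) \to 1$ by (1), this piece alone already contributes $\langle t, z_0\rangle \pm |t|\delta + o(1)$ to $c_n(t)$. On the complement, Cauchy--Schwarz yields
\[
E_n\bigl[e^{\langle t, W_n\rangle}\mathbf{1}_{A_n^c}\bigr] \leq \sqrt{E_n[e^{2\langle t, W_n\rangle}]}\,\sqrt{P_n(A_n^c)} \leq \exp\bigl(a_n[c_n(2t)/2 - L(\delta)/2]\bigr),
\]
where hypothesis (a) makes the MGF at $2t$ finite and (1) supplies $P_n(A_n^c) \leq e^{-a_n L(\delta)}$. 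Continuity of $c$ at $0$ with $c(0)=0$ lets me restrict to $|t|$ so small that $c(2t)/2 - L(\delta)/2 < \langle t, z_0\rangle + |t|\delta$, making the tail contribution exponentially subdominant. Passing $n \to \infty$ then gives $|c(t) - \langle t, z_0\rangle| \leq |t|\delta$ for $|t| \leq r(\delta)$, and letting $\delta \downarrow 0$ yields the required $o(|t|)$ bound.

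The main obstacle is the coupled parameter choice in the second implication: $|t|$ must be taken small relative to both $\delta$ and to the rate $L(\delta)$, which may deteriorate as $\delta \downarrow 0$. Hypothesis (a), which is not needed for $(2) \Rightarrow (1)$, is essential here to keep the Cauchy--Schwarz bound finite for every $t$ in a neighborhood of $0$. The sphere-covering step in the first implication is routine but does exploit that $D$ is finite.
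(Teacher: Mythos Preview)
The paper does not actually supply a proof of this theorem: it simply refers to \cite[Theorem II.6.3]{El} ``for a proof based on a large deviation argument.'' So there is no in-paper proof to compare against line by line; the relevant comparison is with Ellis's argument, which proceeds through the Legendre--Fenchel transform $I(z)=\sup_t[\langle t,z\rangle - c(t)]$ and the large deviation upper/lower bounds for $P_n(W_n/a_n\in A)$ in terms of $\inf_{z\in A} I(z)$, together with the convex-analytic fact that $c$ is differentiable at $0$ with gradient $z_0$ if and only if $I$ has $z_0$ as its unique minimizer.

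Your proposal is correct and is essentially a self-contained repackaging of the same ideas. For $(2)\Rightarrow(1)$ your exponential Chebyshev bound is exactly the Cram\'er/Chernoff upper bound that underlies Ellis's large deviation upper bound, so there the two routes coincide. For $(1)\Rightarrow(2)$ you take a more elementary path: rather than invoking the LDP and Legendre duality, you estimate $c_n(t)$ directly by splitting the MGF over $\{|W_n/a_n - z_0|<\delta\}$ and its complement, controlling the tail via Cauchy--Schwarz and hypothesis~(a). This buys you a proof that avoids any convex-analysis machinery beyond continuity of $c$, at the cost of the coupled parameter choice you flag (for each $\delta>0$ one must restrict to $|t|\le r(\delta)$ so that $c(2t)/2 - L(\delta)/2$ stays strictly below $\langle t,z_0\rangle + |t|\delta$; continuity of $c$ at $0$ with $c(0)=0$ guarantees $r(\delta)>0$, which is all differentiability requires). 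Ellis's route, by contrast, handles both directions symmetrically through the duality $c\leftrightarrow I$ and yields the stronger statement that exponential convergence to $z_0$ is equivalent to $I$ having $z_0$ as its unique zero, but requires the reader to know that framework.
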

\noindent
See \cite[Theorem II.6.3]{El} for a proof based on a large deviation argument.

\subsection{Random quenched SLLN: Proof of Theorem \ref{random_que_SLLN}} \label{proof_slln_q}
According to Theorem \ref{II.6.3}, exponential convergence can be obtained by proving the existence of the limit of the random quenched cumulant generating function $c_{\sss N}(t)$ defined in (\ref{c_N}), and proving the differentiability of the limiting function  in $t=0$. We have
	\be
	\label{c_randomq_zeta}
	c_{\sss N}(t)  \, = \, \frac{1}{N} \log \sum_{\sigma \in \Omega_{\sss N}} 
	\frac{\exp \left(\beta \sum_{(i,j)\in e_{\sss N}}{\sigma_i \sigma_j} 
	+ \left(B + t\right)\sum_{i \in[N]} {\sigma_i}  \right)}{Z_{\sss N}(\beta,B)}
	= \, \frac{1}{N} \log \frac{Z_{\sss N}(\beta,B + t)}{Z_{\sss N}(\beta,B)},  
	\ee
and recalling the definition  (\ref{press_N}), the function $c_{\sss N}(t)$ can be rewritten as a difference of random quenched pressures as
	\begin{equation}\label{c_finito}
	c_{\sss N}(t) = \psi_{\sss N}(\beta , B + t) - \psi_{\sss N} (\beta, B).
	\end{equation}
The existence of the limit 
	\be\label{c_infinito}
	c(t):= \lim_{N \rightarrow \infty} c_{\sss N}(t) = \psi(\beta , B + t) - \psi (\beta, B) \quad a.s.
	\ee
is then obtained from the existence of the pressure in the thermodynamic limit, as stated in Theorem \ref{term_lim}. Moreover,  from the differentiability of the infinite-volume random pressure with respect to $B$,  we also obtain 
	\[
	c'(t) = \frac{\partial}{\partial t} \left[\psi(\beta , B + t) - \psi (\beta, B)\right] 
	= \frac{\partial}{\partial B} \left[\psi(\beta , B + t)\right],
	\]
and hence	
	\[
	c'(0) = \frac{\partial}{\partial B} \left[\psi(\beta , B)\right] = M(\beta, B).
	\]
Thus, by Theorem \ref{II.6.3}, we obtain the exponential convergence in Theorem \ref{random_que_SLLN}, which immediately implies the SLLN. \qed

\subsection{Random quenched CLT: Proof of Theorem \ref{random-CLT}}
\label{prf_rq_CLT}
We give the proof for $B\geq0$ only, the case $B<0$ is handled similarly. The strategy of the proof is to show that w.r.t.\ the random quenched measure the moment generating 
function of the random variable 
	\begin{equation}\label{V}
	V_{\sss N} = \frac{S_{\sss N} - N M_{\sss N}(\beta,B)}{\sqrt{N}}.
	\end{equation}
converges to the moment generating function of a Gaussian random variable with
variance $\chi(\beta,B)$ given in \eqref{lim_susc}, i.e.,
	\begin{equation}\label{funzGener}
	\lim_{N \rightarrow\infty} 
	\m \left(\exp\left(t V_{\sss N}\right)\right) 
	\;=\; \exp\left(\frac{1}{2}\chi(\beta,B) t^2\right) \qquad \mbox{for all}\; t \in \left[0,\alpha\right),
	\end{equation}
and  some $\alpha >0$.  This can be done by  expressing  $\m \left(\exp\left(t V_{\sss N}\right)\right)$ in terms of the second derivative of the cumulant generating function  $c_{\sss N}(t)$ defined in \eqref{c_N}.\\
\noindent
A simple computation shows that 
\begin{equation}\label{c'_N}
c'_{\sss N}(t) \,= \,\frac{1}{N} \frac{\m \left(S_{\sss N}\exp\left(t S_{\sss N}\right)\right)}{\m \left(\exp\left(t S_{\sss N}\right)\right)} \,=\, {\m}_{(\beta, B+t)}\left(\frac{S_{\sss N}}{N}\right),
\end{equation}
where, in order to stress the dependence on the magnetic field, we have used the symbol ${\m}_{(\beta, B+t)} \left(\cdot\right)$ to denote the $\m$-average in the presence of the field $B+t$. Thus,
	\eqan{
	\label{c''_N}
	c''_{\sss N}(t)&= \, \frac{1}{N}  \frac{\m \left(S_{\sss N}^2\exp\left(t S_{\sss N}\right)\right)
	\m \left(\exp\left(t S_{\sss N}\right)\right) - \m ^2\left(S_{\sss N}\exp\left(t S_{\sss N}\right)\right)}
	{\m ^2\left(\exp\left(t S_{\sss N}\right)\right)} \\
	&= \, Var_{\m(\beta, B+t)}\left(\frac{S_{\sss N}}{N}\right).\nn
	}
In particular, the derivatives in $t=0$ of $c_{\sss N}(t)$ equal
	\[
	c'_{\sss N}(0) \,=\, M_{\sss N}(\beta, B),
	\]
and
	\[
	c''_{\sss N}(0)\,=\, \frac{1}{N} \left[\m \left(S_{\sss N}^2\right) -  \m ^2\left(S_{\sss N}\right)\right] 
	\,=\, \chi_{\sss N}(\beta,B). 
	\]
By Theorem \ref{term_lim},
 	\begin{equation}\label{carab1}
	c''_{\sss N}(0) \,=\, \chi_{\sss N}(\beta,B) \,\rightarrow \, \chi(\beta,B) \qquad \qquad \mbox{as}\;	
	\quad N \rightarrow\infty.
	\end{equation}
Let us take $t>0$ and set $t_{\sss N} = t/\sqrt{N}$. By using the fact that $c_{\sss N}(0)=0$ and applying Taylor's theorem with Lagrange remainder, we obtain
	\begin{align}\label{taylor}
	\log \m \left(\exp\left(t V_{\sss N}\right)\right) 
	&=\, \log \m \left(\exp\left(\frac{t S_{\sss N} - tN M_{\sss N}(\beta,B)}{\sqrt{N}}\right)\right)\nn\\ 
	&= \log \left[\m \left(\exp\left(t_{\sss N} S_{\sss N}\right)\right)\right] 
	- t \sqrt{N} M_{\sss N}(\beta,B)\nn\\
	&= N \left[c_{\sss N}(t_{\sss N}) - t_{\sss N} c'_{\sss N} (0)\right]\, 
	= \, N \frac{t_{\sss N}^2}{2} c_{\sss N}''(t_{\sss N}^*)= \frac{t^2}{2} c_{\sss N}''(t_{\sss N}^*),
	\end{align}
for some $t_{\sss N}^* \in [0, t/\sqrt{N}]$.

In order to control the limiting behavior of $c_{\sss N}''(t_{\sss N}^*)$, we exploit the following property of $c_{\sss N}(t)$:
\begin{proposition}[Convergence of double derivative cumulant generating function]
\label{propp}
For \col{$0 < \beta < \beta_c^{\mathrm{qu}}$} and $B\geq 0$, \col{there exists some $\alpha>0$ such that 
$\lim_{N\rightarrow\infty} c''_{\sss N}(t_{\sss N}) = \chi(\beta, B)$ for all $t_{\sss N}\in\left[0, \alpha\right)$
with $t_{\sss N}\to 0$  and almost all sequences of graphs $\gs$.}
\end{proposition}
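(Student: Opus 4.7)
My plan is to combine the identification of $c_N$ with shifted random quenched pressures, the GHS inequality, and the almost sure convergence supplied by Theorem \ref{term_lim}. From \eqref{c_finito} one has $c_N(t) = \psi_N(\beta, B+t) - \psi_N(\beta, B)$, so that $c_N''(t) = \chi_N(\beta, B+t)$. Since $(\beta, B) \in \mathcal{U}^{\mathrm{qu}}$ with $B \geq 0$, I would choose $\alpha > 0$ small enough that $(\beta, B+t) \in \mathcal{U}^{\mathrm{qu}}$ for all $t \in [0, \alpha]$. Theorem \ref{term_lim}(iv) then delivers, for each fixed $t \in [0, \alpha]$, the almost sure convergence $\chi_N(\beta, B+t) \to \chi(\beta, B+t)$; intersecting the full-measure events over a countable dense set $\mathcal{Q} \subset [0, \alpha]$ produces a single event of full probability on which $\chi_N(\beta, B+q) \to \chi(\beta, B+q)$ simultaneously for every $q \in \mathcal{Q}$.

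The key analytic input is the GHS inequality: for a ferromagnetic Ising measure with $B \geq 0$, the finite-volume susceptibility $B' \mapsto \chi_N(\beta, B')$ is non-increasing on $[0, \infty)$. Consequently, for any $t_N \in [0, \epsilon]$ with $\epsilon \in \mathcal{Q}$,
\[
\chi_N(\beta, B+\epsilon) \;\leq\; c_N''(t_N) \;\leq\; \chi_N(\beta, B),
\]
and passing to $N \to \infty$ on the full-probability event,
\[
\chi(\beta, B+\epsilon) \;\leq\; \liminf_{N\to\infty} c_N''(t_N) \;\leq\; \limsup_{N\to\infty} c_N''(t_N) \;\leq\; \chi(\beta, B).
\]
It then remains to let $\epsilon \downarrow 0$, which requires the right-continuity $\chi(\beta, B+\epsilon) \to \chi(\beta, B)$.

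This right-continuity is exactly where the uniqueness regime plays its role. Theorem \ref{term_lim}(iv) says that $\chi(\beta, B) = \partial_B^2 \psi(\beta, B)$ exists at $(\beta, B) \in \mathcal{U}^{\mathrm{qu}}$, i.e.\ that $M(\beta, \cdot)$ is differentiable at $B$. Since $M(\beta, \cdot)$ is concave on $[0, \infty)$ (the finite-volume concavity supplied by GHS passes to the limit), at a differentiability point the derivative $\chi(\beta, \cdot)$ is automatically continuous: both one-sided limits of the monotone function $\chi(\beta, \cdot)$ at $B$ are forced to coincide with $\chi(\beta, B)$. Combined with the sandwich above this gives $\lim_{N\to\infty} c_N''(t_N) = \chi(\beta, B)$ almost surely, as claimed.

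The main obstacle is that Theorem \ref{term_lim} provides convergence of $c_N''$ only at each \emph{fixed} $t$, whereas the proposition requires convergence along moving sequences $t_N \to 0$. The GHS monotonicity dissolves this difficulty by reducing the question to convergence at two fixed endpoints, $0$ and $\epsilon$, which is then closed using continuity of $\chi$ at $(\beta, B)$. Without GHS---as happens for the averaged quenched measure $P_N$---no such monotone sandwich is available, and this is precisely why Sections \ref{three} and \ref{four} have to resort to direct computations for $\CMNtwo$ and $\CMNonetwo$ rather than following this concavity-based route.
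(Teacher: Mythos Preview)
Your argument is correct and rests on the same two ingredients as the paper's proof: the GHS concavity of $c_N'$ (equivalently, the monotonicity of $B'\mapsto\chi_N(\beta,B')$ on $[0,\infty)$) and the almost sure convergence of $\chi_N$ at fixed field values from Theorem~\ref{term_lim}(iv). The difference is in packaging: the paper invokes Ellis' abstract Lemma~\ref{lemmaV.7.5} twice (once on the convex $c_N$ to pass to $c_N'$, then on the convex $-c_N'$ to pass to $c_N''$ along $t_N\to 0$), whereas you perform the monotone sandwich $\chi_N(\beta,B+\epsilon)\le c_N''(t_N)\le\chi_N(\beta,B)$ directly and close it with the right-continuity of $\chi(\beta,\cdot)$ at $B$. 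Your route has the minor advantage that it does not require a separate treatment of $B=0$: the paper splits cases there because Lemma~\ref{lemmaV.7.5} is stated on an open interval containing $t_0=0$, while $-c_N'$ is convex only on $[-B,\infty)=[0,\infty)$, forcing the linear-extension trick with $h_N$; your one-sided sandwich never leaves $[0,\epsilon]$ and so avoids this.
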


\noindent {\it Proof of Theorem \ref{random-CLT}.}
Proposition \ref{propp} immediately implies that \eqref{funzGener} holds,
which in turn proves Theorem \ref{random-CLT}.
\qed
\vskip0.5cm

\noindent
The remainder of this section is devoted to the proof of Proposition \ref{propp}. It relies on the concavity of the functions $c^{\prime}_{\sss N}(t)$, as proven in the following lemma:

\begin{lemma}\label{concavity}
For $B\geq0$, $c'_{\sss N}(t)$ is concave on $[-B, \infty)$.
\end{lemma}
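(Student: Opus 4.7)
The plan is to identify $c'_{\sss N}(t)$ with the finite-volume magnetization at shifted magnetic field and then invoke the GHS inequality for ferromagnetic Ising systems. From \eqref{c'_N} we have $c'_{\sss N}(t)=\mu_{G_{\sss N}}^{(\beta,B+t)}(S_{\sss N}/N)$, so concavity of $c'_{\sss N}(t)$ on $[-B,\infty)$ is equivalent to $c'''_{\sss N}(t)\le 0$ whenever the effective external field $B+t$ is nonnegative. Since $Z_{\sss N}(\beta,B)$ in the denominator of \eqref{c_randomq_zeta} does not depend on $t$, we can write
\begin{equation*}
c'''_{\sss N}(t)=\frac{1}{N}\,\frac{\partial^{3}}{\partial B^{3}}\log Z_{\sss N}(\beta,B+t),
\end{equation*}
i.e.\ $N\,c'''_{\sss N}(t)$ equals the third cumulant of $S_{\sss N}$ under $\mu_{G_{\sss N}}^{(\beta,B+t)}$.

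Next, I would observe that the Ising model defined in \eqref{bg} has purely ferromagnetic couplings ($\beta\ge 0$) and a uniform external field of strength $B+t\ge 0$ on $[-B,\infty)$. In this setting the GHS inequality applies: for every triple of vertices $i,j,k\in[N]$,
\begin{equation*}
\frac{\partial^{3}\log Z_{\sss N}(\beta,\mathbf{B})}{\partial B_{i}\,\partial B_{j}\,\partial B_{k}}\;\le\;0
\end{equation*}
whenever all local fields $B_{i}$ are nonnegative. Summing over $i,j,k$ (which corresponds to differentiating three times with respect to the common field) yields
\begin{equation*}
\frac{\partial^{3}}{\partial B^{3}}\log Z_{\sss N}(\beta,B+t)\;\le\;0\qquad\text{for }B+t\ge 0,
\end{equation*}
hence $c'''_{\sss N}(t)\le 0$ on $[-B,\infty)$, which is the desired concavity of $c'_{\sss N}(t)$.

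The only nontrivial ingredient is the GHS inequality itself, which is classical and holds for arbitrary finite graphs with ferromagnetic pair interactions under a nonnegative (possibly site-dependent) external field; the restriction $t\ge -B$ is precisely what guarantees that the hypothesis of GHS is met. Everything else is a direct computation: rewriting $c'_{\sss N}$ as a magnetization, then expressing $c'''_{\sss N}$ as a third field-derivative of the log-partition function. Thus the main (and essentially only) step is to cite and apply GHS; no estimates on the graph structure or on the thermodynamic limit are needed for this lemma.
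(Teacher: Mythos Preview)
Your proof is correct and follows essentially the same approach as the paper: both identify $c'_{\sss N}(t)$ with the magnetization at shifted field $B+t$ and then apply the GHS inequality to conclude that the third derivative in the field is nonpositive whenever $B+t\ge 0$. The only cosmetic difference is that the paper writes $c'''_{\sss N}(t)=\frac{1}{N}\sum_{i}\frac{\partial^{2}}{\partial B^{2}}\mu_{G_{\sss N},(\beta,B+t)}(\sigma_i)$ directly, whereas you pass through the third derivative of $\log Z_{\sss N}$ with respect to site-dependent fields and then sum; these are equivalent formulations.
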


\begin{proof} For $B\geq 0$, the concavity of $c'_{\sss N}(t)$ on $[0, \infty)$ can be obtained by observing that this function is the magnetization per particle w.r.t.\ $\m$  in the presence of the magnetic field $B+t$, see  (\ref{c'_N}), and then by applying the GHS inequality. Indeed, for $t \geq -B$,  the GHS inequality \cite[Lemma 2.2]{DGH} implies that
	\begin{equation}
	\frac{\partial^2}{\partial t^2} c'_{\sss N}(t) 
	\, = \, \frac{1}{N} \sum_{i \in [N]} \frac{\partial^2}{\partial B^2}  
	{\m}_{(\beta, B+t)}\left(\sigma_i\right) \leq 0,
	\end{equation}
so that $c'_{\sss N}(t)$\, is concave on\, $\left[- B, \infty \right)$ \, for \, $B\geq0$.
\end{proof}

The proof of Proposition \ref{propp} further requires the following lemma that is proved in \cite[
Lemma V.7.5]{El}:

%
\begin{lemma}[Convergence of derivatives of convex functions]
\label{lemmaV.7.5}
Let $(f_n)_{n\geq1}$ be a sequence of convex functions on an open interval $A$ of $\mathbb R$ such that $f(t) = \lim_{n \rightarrow \infty} f_n(t)$ exists for every $t \in A$. Let $(t_n)_{n\geq 1}$  be a sequence in $A$ that converges to a point $t_0 \in A$. If $f'_n(t_n)$ and $f'(t_0)$ exist, then $\lim_{n \rightarrow \infty} f'_n(t_n)$ exists and equals $f'(t_0)$.
\end{lemma}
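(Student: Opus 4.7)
The plan is to exploit two classical facts about convex functions: (i) a pointwise limit of convex functions on an open interval is itself convex, hence continuous on the interior; and (ii) for any convex $g$ and any three points $a<s<b$ in the interval of definition, the monotonicity of difference quotients gives the sandwich
\[
\frac{g(s)-g(a)}{s-a} \;\le\; g'^{-}(s) \;\le\; g'^{+}(s) \;\le\; \frac{g(b)-g(s)}{b-s},
\]
which collapses to two-sided bounds on $g'(s)$ at every point where the derivative exists. This sandwich is the one piece of machinery the proof requires.

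Fix auxiliary points $a,b\in A$ with $a<t_0<b$. Since $t_n\to t_0$, we have $a<t_n<b$ for all sufficiently large $n$, and applying the sandwich to $f_n$ at $s=t_n$ yields
\[
\frac{f_n(t_n)-f_n(a)}{t_n-a} \;\le\; f_n'(t_n) \;\le\; \frac{f_n(b)-f_n(t_n)}{b-t_n}.
\]
I would then pass to $n\to\infty$ in this double inequality: the pointwise convergence hypothesis gives $f_n(a)\to f(a)$ and $f_n(b)\to f(b)$, while $f_n(t_n)\to f(t_0)$ follows from the standard upgrade of pointwise to locally uniform convergence for convex functions, discussed below. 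Taking $\liminf$ on the left and $\limsup$ on the right produces
\[
\frac{f(t_0)-f(a)}{t_0-a} \;\le\; \liminf_{n\to\infty} f_n'(t_n) \;\le\; \limsup_{n\to\infty} f_n'(t_n) \;\le\; \frac{f(b)-f(t_0)}{b-t_0}.
\]
Finally, letting $a\uparrow t_0$ and $b\downarrow t_0$ inside $A$, the outer difference quotients tend by definition to $f'^{-}(t_0)$ and $f'^{+}(t_0)$ respectively; since $f'(t_0)$ exists both limits coincide with $f'(t_0)$, and the sandwich closes to give $\lim_{n\to\infty} f_n'(t_n)=f'(t_0)$.

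The one subtle step is $f_n(t_n)\to f(t_0)$, which is \emph{not} implied by pointwise convergence alone when the evaluation point itself varies with $n$. Here convexity does the work: on any compact sub-interval $[\alpha,\beta]\subset A$, every convex function is Lipschitz with constant bounded by the slopes of the secant lines through its values at a slightly larger interval $[\alpha',\beta']$ with $\alpha'<\alpha<\beta<\beta'$ in $A$. Because $(f_n(\alpha'))_n$ and $(f_n(\beta'))_n$ converge and are therefore bounded, the family $(f_n)$ is uniformly Lipschitz on $[\alpha,\beta]$; combined with pointwise convergence this forces uniform convergence on $[\alpha,\beta]$, which together with $t_n\to t_0$ yields $f_n(t_n)\to f(t_0)$. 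All other steps of the argument are routine consequences of the sandwich and the differentiability hypothesis at $t_0$.
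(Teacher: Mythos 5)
The paper does not prove this lemma itself; it simply cites \cite[Lemma V.7.5]{El} and moves on, so there is no internal proof to compare against. Your argument is correct and is the standard one: sandwich $f_n'(t_n)$ between secant slopes anchored at fixed points $a<t_0<b$, pass to the limit, and collapse $a\uparrow t_0$, $b\downarrow t_0$. You have also correctly identified the genuine subtlety, namely that $f_n(t_n)\to f(t_0)$ does not follow from pointwise convergence alone when the evaluation point moves, and your uniform-Lipschitz/local-uniform-convergence patch is sound (the Lipschitz bound on $[\alpha,\beta]$ involves $f_n(\alpha'),f_n(\alpha),f_n(\beta),f_n(\beta')$, all of which converge and so are bounded).

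One remark: the convergence $f_n(t_n)\to f(t_0)$ can be avoided entirely by a small reorganization. Fix four points $a<a'<t_0<b'<b$ in $A$; once $n$ is large enough that $a'<t_n<b'$, the monotonicity of the subdifferential of $f_n$ gives
\[
\frac{f_n(a')-f_n(a)}{a'-a}\;\le\;f_n'^{-}(a')\;\le\;f_n'(t_n)\;\le\;f_n'^{+}(b')\;\le\;\frac{f_n(b)-f_n(b')}{b-b'},
\]
and now every evaluation point is fixed, so letting $n\to\infty$ requires only pointwise convergence. Taking, say, $a=t_0-2\delta$, $a'=t_0-\delta$, $b'=t_0+\delta$, $b=t_0+2\delta$ and $\delta\downarrow 0$, both outer quotients tend to $f'(t_0)$ by differentiability of $f$ at $t_0$, closing the sandwich without any appeal to uniform convergence of convex functions. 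Both routes are rigorous; yours is a touch longer but perfectly valid.
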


\begin{proof}[Proof of Proposition \ref{propp}]
In the following proof, we use subscripts to denote the  dependence on $\beta$ and $B$ of the functions $c_{\sss N}(t)$ and $c(t)$. We first consider $B>0$.  
By H\"older's inequality, $t \mapsto c_{N,\beta,B}(t)$ is a convex function on $\mathbb{R}$ and, as noted in (\ref{c_finito}) and (\ref{c_infinito}), $c_{N,\beta, B}(t) \; {\longrightarrow} \; c_{{\beta,B}}(t) = \psi(\beta , B + t) - \psi (\beta, B)$ as $N \rightarrow \infty$. For $B>0$, the derivative  $\frac{\partial}{\partial B}\psi (\beta, B_0)$ exists for all $B_0$ in some neighborhood of $B$. Hence, there exists $\alpha >0$ such that $c'(t) = \frac{\partial}{\partial B}\psi(\beta , B + t)$ exists for all $-\alpha < t < \alpha$. Then, Lemma \ref{lemmaV.7.5} implies that
	\begin{equation}
	c'_{N,\beta,B}(t) \longrightarrow c'_{\beta,B}(t) \; \; \;\; \quad 
	\mbox{for all}\; -\alpha < t < \alpha \, , \; \quad \mbox{as}\; N \rightarrow \infty.
	\end{equation}
\col{According to Lemma \ref{concavity}, each function $-c'_{N,\beta,B}(t)$ is convex on the interval  $[-B, \infty)$, which contains the origin in its interior since $B>0$.}
By (\ref{lim_susc}) we know that $\frac{\partial^2}{\partial B^2} \psi (\beta, B) = \chi(\beta, B)$, hence
	\begin{equation}
	c''_{\beta,B}(0) = \frac{\partial^2}{\partial B^2} \psi (\beta, B) = \chi(\beta, B).
	\end{equation}
This completes the proof of the proposition for $B>0$, because Lemma \ref{lemmaV.7.5} implies that, for any $0\leq t_{\sss N} < \alpha$ with $t_{\sss N} \rightarrow 0$,
	\begin{equation}
	-c''_{N,\beta,B}\left(t_{\sss N}\right) \longrightarrow -c''_{\beta,B}(0)= - \chi(\beta, B) 
	\; \; \;\;\quad 	\mbox{as}\; N \rightarrow \infty. 
	\end{equation}

The proof of Proposition \ref{propp} for $B>0$ used the fact that all the function ${-c'_{N,\beta,B}}(t)$ is convex on an interval that contains the origin in its interior. But for $B=0$, $-c'_{N,\beta,B}(t)$ is convex on $\left[0, \infty \right)$ and by symmetry is concave on $\left(-\infty, 0\right]$. Hence the above proof must be modified. We fix $0 < \beta < \beta_c^{\mathrm{qu}}$ and 
\col{notice} that $\chi(\beta, 0)$ is finite.

Since for $0 < \beta < \beta_c^{\mathrm{qu}}$, $\frac{\partial }{\partial B}\psi(\beta , B)$ exists for all $B$ real, $c_{\beta,0}(t)= \psi(\beta , t) - \psi (\beta, 0)$ is differentiable for all $t$ real. We define new functions
	\begin{equation} 
	h_{\sss N}(t) \,=\, 
	\left\{\begin{array}{ll} -c'_{N,\beta,0}(t) \quad \quad \quad \; \mbox{for} \; t \geq 0,\\ 
	-c''_{N,\beta,0}(0)\cdot t \quad \quad \mbox{for} \; t < 0, 
	\end{array}\right. \quad \quad 
	h(t) \,=\, 
	\left\{\begin{array}{ll} -c'_{\beta,0}(t) \quad \quad \quad \; \mbox{for} \; t \geq 0,\\ 
	- \chi(\beta, 0)\cdot t \quad \quad \mbox{for} \; t < 0. \end{array}\right.  
	\end{equation}
Then $h_{\sss N}$ is continuous at $0$ since $c'_{N,\beta,0}(0) = M(\beta,0) = 0,$ and convex on $\mathbb R$. By (\ref{carab1}), $c''_{N,\beta,B}(0) \rightarrow \chi(\beta, B)$, and so $h_{\sss N}(t) \rightarrow h(t)$ for all $t$ real. \col{Since $\chi(\beta, 0)$ is 
finite}, $h'(0) = - c''_{\beta,0}(0) = - \chi(\beta, 0)$. Lemma \ref{lemmaV.7.5} implies that for any $t_{\sss N} \geq 0$ with $t_{\sss N} \rightarrow 0$
	\begin{equation}
	h'_{\sss N}\left(t_{\sss N}\right) 
	= -c''_{N,\beta,0}\left(t_{\sss N}\right) \, \rightarrow \, h'(0) 
	= - \chi(\beta,0) \quad \quad 
	\mbox{as}\; N \rightarrow \infty.
	\end{equation}
This proves the proposition for $B=0$.
\end{proof}

\section{Proofs for $\CMNtwo$}
\label{three}
In this section, we prove the CLT with respect to the averaged quenched measure for the 2-regular random graph $\CMNtwo$. We start by computing partition function for the one-dimensional Ising model.  We will use them in the proofs of CLT's because the structures formed in $\CMNtwo$ and $\CMNonetwo$, are lines (indicated with the letter $l$) and cycles (or tori, indicated with the letter $t$).

\subsection{Partition functions for the one-dimensional Ising model}
\label{Sec_Ising}
The partition function of the one-dimensional Ising model is given by
	\be
	\label{uno}
	Z^{\sss (b)}_{\sss N}(\beta,B) 
	= \sum_{\sigma_1=\pm 1}\ldots\sum_{\sigma_{\sss N}=\pm 1}
	\exp\left(\beta \sum_{i=1}^N \sigma_i\sigma_{i+1}(b)+  B \sum_{i=1}^N \sigma_i\right)\;,
	\ee
where for {\em periodic boundary conditions} ($b=t$) we put $\sigma_{N+1}(t) = \sigma_1$,
whereas  we put $\sigma_{N+1}(l) = 0$ for {\em free boundary condition}  ($b=l$). We often omit $(\beta, B)$ from the notation and simply write $Z^{\sss (t)}_{\sss N}$ and $Z^{\sss (l)}_{\sss N}$.
Let $D$ be the  $2\times 2$ matrix defined, for $(\sigma_i,\sigma_{i+1})\in \{-1,+1\}^2$, by
	\[
	D_{\sigma_i,\sigma_{i+1}} 
	= \exp \left[\beta \sigma_i \sigma_{i+1}+ \frac{B}{2}(\sigma_i +  \sigma_{i+1}) \right].
	\]
With this definition we may rewrite (\ref{uno})  for periodic boundary conditions in the form
	\begin{equation}
	\label{tre}
	Z^{\sss (t)}_{\sss N} =  \sum_{\sigma_1=\pm 1}\ldots\sum_{\sigma_{\sss N}=\pm 1}
	D_{\sigma_1,\sigma_2}D_{\sigma_2,\sigma_3}\ldots D_{\sigma_{\sss N},\sigma_{N+1}} 
	\, = \, \mbox{Trace}(D^N) \, = \, \lambda_+^N + \lambda_-^N,
	\end{equation}
where $\lambda_+$ and $ \lambda_-$ are the two eigenvalues of $D$, given by
	\be
	\label{lambda_piu_meno}
	\lambda_{\pm} = \lambda_{\pm}(\beta, B) 
	= \e^{\beta } \left[ \cosh(B)\pm \sqrt{\sinh^2(B)+\e^{-4\beta }}\right]\;.
	\ee
Obviously,  $\lambda_+(\beta, B)> \lambda_-(\beta, B)$. In the case of free boundary conditions, we observe that
	\eqan{
	\label{tre_1}
	Z^{\sss (l)}_{\sss N} 
	&=  \sum_{\sigma_1=\pm 1}\cdots\sum_{\sigma_{\sss N}=\pm 1}
	\e^{\frac{B}{2}\sigma_1}D_{\sigma_1,\sigma_2}D_{\sigma_2,\sigma_3}
	\cdots D_{\sigma_{N-1,}\sigma_{\sss N}}\e^{\frac{B}{2}\sigma_{\sss N}} \\
	&=  \sum_{\sigma_1=\pm 1}\sum_{\sigma_{\sss N}=\pm 1}  \e^{\frac{B}{2}\sigma_1}
	D^{N-1}_{\sigma_1,\sigma_{\sss N}}\e^{\frac{B}{2}\sigma_{\sss N}} \, = \, v^T D^{N-1} v,\nn
	}
where the vector $v$ is defined by $v^{T} = (\e^{B/2},\e^{-B/2})$. This can be written as
	\be
	\label{cinque5}
	Z^{\sss (l)}_{\sss N}  =  A_+ \lambda_+^N + A_- \lambda_-^N,
	\qquad \mbox{with}\qquad  A_{\pm} = \frac{(v^T \cdot v_{\pm})^2}{\lambda_{\pm}},
	\ee
where $v_{\pm}$ denote the two orthonormal eigenvector of the matrix $D$, and therefore
	$$
	A_{\pm} = A_{\pm}(\beta, B) 
	= \frac{\e^{-2\beta}\e^{\pm B} + (\lambda_+-\e^{\beta+B})^2 \e^{\mp B} 
	\pm 2 \e^{-\beta} (\lambda_+ - \e^{\beta+B})}
	{[\e^{-2\beta} + (\lambda_+ - \e^{\beta+B})^2 ] \lambda_{\pm}}.
	$$
Let us remark that, since $\beta > 0$, $\lambda_{\pm}(\beta, B)>0$ and  $A_{\pm} \left(\beta, B\right) > 0.$
From \eqref{tre} and \eqref{cinque5}, it follows that the pressure of the one-dimensional Ising model is,
independently of the boundary conditions, given by
	\be
	\psi^{d=1}(\beta, B)=\log \lambda_+(\beta, B) \, .
	\ee

\subsection{Quenched results: Proof of Theorem \ref{CLT_an1}}
In this section, we prove Theorem \ref{CLT_an1}. We start by analyzing the quenched pressure.

\subsubsection{Quenched pressure}
It is not difficult to see that  any 2-regular random graph is formed by cycles only.  Thus,  denoting by   $K^t_{\sss N}$ the  random number of cycle \col{in} the graph, we can enumerate them in an arbitrary order from 1 to $K^t_{\sss N}$  and  call $L_{\sss N}(i)$ the length (i.e. the number of vertices)  of the $i$-th cycle.  The random variable $K^t_{\sss N}$  is 
given  by 
	\be
	\label{kappa_N}
	K^t_{\sss N}= \sum_{j=1}^N I_j,
	\ee
where $I_j$  are independent Bernoulli variables, i.e.,
	\be\label{bernI}
	I_j = \mbox{Bern}\,\left(\frac{1}{2N -2j+ 1}\right).
	\ee
Indeed,  at every step $j$ in the construction of the 2-regular random graph, namely at each pairing of two half-edges, we have one and only one possibility to close a cycle. This possibility corresponds to drawing exactly one half-edge out of  the remaining $2N-2j+1$ unpaired half-edges. The indicator of this event  is the Bernoulli the variable  $I_j$. Obviously, the variables $\left(I_j\right)_{j=1}^N$ are independent, but not identical, and the number of the cycles $K^t_{\sss N}$ is distributed as their sum.

Since the random graph splits into (disjoint) cycles, its partition function factorizes into the product of the partition functions of each cycle. Therefore,
	\be
	\label{zprod}
	Z_{\sss N}(\beta, B) = \prod_{i=1}^{K^t_{\sss N}} Z_{L_{\sss N}(i)}^{\sss (t)} (\beta, B),
	\ee
where, by (\ref{tre}) \col{the partition function of the $i^{th}$ cycle is}
	\be
	Z_{L_{\sss N}(i)}^{\sss (t)}(\beta, B) 
	= \lambda_+^{L_{\sss N}(i)}(\beta, B) + \lambda_-^{L_{\sss N}(i)}(\beta, B).
	\ee
Because $\beta > 0$, we have $0 < \lambda_-(\beta, B) < \lambda_+(\beta, B)$, so that, for every $i$,
	\be
	\lambda_+^{L_{\sss N}(i)}(\beta, B) \leq Z_{L_{\sss N}(i)}^{\sss (t)}(\beta, B) 
	\leq 2 \lambda_+^{L_{\sss N}(i)}(\beta, B).
	\ee
As a result, we can bound the the pressure by
	\be
	\prod_{i=1}^{K^t_{\sss N}} \lambda_+^{L_{\sss N}(i)}(\beta, B) 
	\leq \prod_{i=1}^{K^t_{\sss N}} Z_{L_{\sss N}(i)}^{\sss (t)}(\beta, B) 
	\leq \prod_{i=1}^{K^t_{\sss N}} 2 \lambda_+^{L_{\sss N}(i)}(\beta, B),
	\ee
and, since $\sum_{i=1}^{K^t_{\sss N}}L_{\sss N}(i) = N$, we finally obtain
	\be\label{Bound}
	\lambda_+^N(\beta, B) \leq Z_{\sss N}(\beta, B) \leq 2^{K^t_{\sss N}} \lambda_+^N(\beta, B).
	\ee
Now we are ready to compute the quenched pressure in the thermodynamic limit, defined as in \eqref{lim_press}. By the previous inequality,
	\be
	\label{ineqZCM2}
	\log \lambda_+(\beta, B) \leq \psi_{\sss N} (\beta, B) 
	\leq \log 2 \cdot \frac{K^t_{\sss N}}{N}  + \log \lambda_+(\beta, B),
	\ee
where $ \psi_{\sss N} (\beta, B)$ is the random quenched pressure defined in \eqref{press_N}.  Now, by \eqref{kappa_N} and \eqref{bernI},
	\be
	\label{lim_av_k}
	\frac{1}{N} \q \left({K^t_{\sss N}} \right) 
	= \frac{1}{N}\sum_{i=1}^N\frac{1}{2N - 2i +1} 
	\, \sim \, \frac{\log N}{N} \to 0\quad \mbox{as}\quad N\to \infty.
	\ee
Therefore, by applying Markov's inequality to the non-negative variable $\frac{K^t_{\sss N}}{N} $,
	\be
	\label{kappaNsuN}
	\frac{K^t_{\sss N}}{N} \stackrel{\mathbb{P}}{\longrightarrow} 0, \qquad \mbox{w.r.t.} \; \, \q\, .
	\ee
Hence,  taking the limits in \eqref{ineqZCM2},  we obtain the infinite volume random quenched pressure:
	\be
	\psi (\beta, B) = \psi^{d=1} (\beta, B)\equiv \log \lambda_+(\beta, B).
	\ee
\col{Moreover, from Corollary \ref{corol_avquetermlim} we also obtain the averaged quenched pressure}	\be
	\label{psiavqueCM2}
	\overline{\psi} (\beta, B) = \psi (\beta, B) =  \psi^{d=1} (\beta, B).
	\ee

\subsubsection{Cumulant generating functions}
As already said in Section \ref{sec_proof_strategy}, in order to prove the averaged quenched CLT for  $\CMNtwo$, we need to calculate the limit for $N \rightarrow \infty$ of the cumulant generating function in the averaged quenched random setting. We write this function 
in the following form:
	\be\label{cbarra}
	\overline{c}_{\sss N}(t) \,=\, \frac{1}{N} \log \p \left[\exp(tS_{\sss N})\right]\, 
	= \, \frac{1}{N} \log \q \left[\frac{Z^{\sss (t)}_{\sss N} \left( \beta, B+t \right)}
	{Z^{\sss (t)}_{\sss N} \left( \beta, B \right)}\right],
	\ee
where, in the right-hand side of the previous equation, we have expressed the random quenched average of $\exp(tS_{\sss N})$ as a ratio of partition functions, i.e.,  $\m[\exp(tS_{\sss N})]=Z^{\sss (t)}_{\sss N} \left( \beta, B+t \right)/Z^{\sss (t)}_{\sss N} \left( \beta, B \right)$.  Again using (\ref{Bound}), we  bound the ratio of the partition functions at different fields as
	\be
	2^{-K^t_{\sss N}}\left(\frac{\lambda_+\left(\beta, B+t \right)}{\lambda_+\left(\beta, B \right)}\right)^N 	
	\leq \frac{Z_{\sss N} \left( \beta, B+t \right)}{Z_{\sss N} \left( \beta, B \right)} 
	\leq 2^{K^t_{\sss N}}\left(\frac{\lambda_+\left(\beta, B+t \right)}
	{\lambda_+\left(\beta, B \right)}\right)^N.
	\ee
Therefore,
	\be
	\frac{1}{N} \log \q 
	\left(2^{-K^t_{\sss N}}\left(\frac{\lambda_+\left(\beta, B+t \right)}
	{\lambda_+\left(\beta, B \right)}\right)^N\right)
	\leq  \overline{c}_{\sss N}(t)  \leq \frac{1}{N} \log\q \left(2^{K^t_{\sss N}}
	\left(\frac{\lambda_+\left(\beta, B+t \right)}{\lambda_+\left(\beta, B \right)}\right)^N\right),
	\ee
and, recalling that $ \lambda_+(\beta, B) $ and $ \lambda_+(\beta, B+t) $ are not random, we obtain
	\be
	\frac{1}{N} \log \left(\q \left(2^{-K^t_{\sss N}}\right)\right)
	 \leq \overline{c}_{\sss N}(t) - \log \lambda_+(\beta, B+t) +\log \lambda_+(\beta, B)  
	\leq \frac{1}{N} \log \left(\q \left(2^{K^t_{\sss N}}\right)\right).
	\ee
Now we can compute the limit of the averages in the previous equation recalling that $K^t_{\sss N}$ is a sum of independent Bernoulli variables $I_{i}$, c.f.  \eqref{kappa_N}. Indeed,
	\begin{align}
	\label{dueallaK}\nonumber
	\frac{1}{N} \log \left(\q \left(2^{K^t_{\sss N}}\right)\right) 
	&= \frac{1}{N} \log \prod_{i=1}^N \q \left(2^{I_i}\right)
	\, = \, \frac{1}{N} \log 
	\prod_{i=1}^N \left[ \frac{2}{2N -2i +1} + \left(1 - \frac{1}{2N -2i +1} \right)\right] & \\ 
	& = \, \frac{1}{N} \sum_{i=1}^N \log \left(1 + \frac{1}{2N -2i +1} \right) 
	\stackrel{N \rightarrow \infty}{\longrightarrow} 0,
	\end{align}
where the limit can be obtained using the inequality $\log(1+x)\le x$ for $x\ge 0$, and the same asymptotic estimate  already used in \eqref{lim_av_k}. In a similar fashion we can also prove that
	\be\label{dueallaKbis}
	\lim_{N \rightarrow \infty} \, \frac{1}{N} \log \left(\q \left(2^{-K^t_{\sss N}}\right)\right) = 0.
	\ee
By  \eqref{dueallaK} and  \eqref{dueallaKbis} we finally obtain the limit of the cumulant generating function in the averaged quenched setting:
	\be\label{stessec}
	\overline{c}(t) = \lim_{N \rightarrow \infty} \overline{c}_{\sss N}(t)
	= \psi (\beta, B+t) - \psi (\beta, B) 
	= \log \lambda_+(\beta, B+t) - \log \lambda_+(\beta, B)\;.
	\ee
On the other hand, since by \eqref{c_infinito},  we have also that $c(t) \,= \, \psi (\beta, B+t) - \psi (\beta, B)$, we conclude that in the thermodynamic limit the averaged quenched and the random quenched cumulant generating functions of the total spin are the same:
	\be\label{stessec2}
	c(t)=\overline{c}(t) = \log \lambda_+(\beta, B+t) - \log \lambda_+(\beta, B).
	\ee

\subsubsection{Averaged quenched CLT: proof of Theorem \ref{CLT_an1}}
\label{prf_aq_CLT}
According to our general strategy, in order to prove Theorem \ref{CLT_an1}, we need to show that
	\be
	\lim_{N \rightarrow \infty} 
	\p \left[\exp\left(t \, \frac{S_{\sss N} - P_{\sss N}(S_{\sss N})}{\sqrt{N}}\right) 	\right] 
	\, = \, \exp \left(\frac{1}{2} \sigma_{\mathrm{aq}}^2(\beta, B) t^2\right)  
	\qquad \mbox{for all}\; t \in \mathbb{R}.
	\ee
As in the proof of Theorem \ref{random-CLT}, the limit can be computed by expressing the expectation on the left-hand side in terms of the proper generating function. Here, this is the averaged quenched cumulant generating function, i.e., $\overline{c}_{\sss N}(t)$. Indeed, using the fact that $\overline{c}_{\sss N}(0) = 0$ \col{and $\overline{c}'_{\sss N}(0) = P_{\sss N}(\frac{S_{\sss N}}{N})$},  see \eqref{cbarra}, by Taylor's theorem with Lagrange remainder,
	\begin{align}\nonumber\label{popi}
	\log \p  \left[\exp\left( \frac{t S_{\sss N} - t P_{\sss N}(S_{\sss N})}{\sqrt{N}}\right) \right] \, 
	&= \, \log \left[\p\left(\exp \left(\frac{t}{\sqrt{N}} S_{\sss N}\right)\right)\right] 
	- \frac{t}{\sqrt{N}} \p(S_{\sss N}) \\
	& = \, N \overline{c}_{\sss N} \left(\frac{t}{\sqrt{N}}\right) - t \sqrt{N} \overline{c}'_{\sss N} (0) 
	\: = \: \frac{t^2}{2} \overline{c}''_{\sss N}(t_{\sss N}),
	\end{align}
for some $t_{\sss N} \in [0 , \frac{t}{\sqrt{N}}]$.  In order to compute the limit of the sequence $\overline{c}''_{\sss N}(t_{\sss N})$, we consider
	\begin{align}\label{c_differenza}\nonumber
	\overline{c}_{\sss N}(t) - \overline{c}(t) 
	& = \frac{1}{N} \log \q \left[\frac{Z^{\sss (t)}_{\sss N}(\beta, B+t)}{Z^{\sss (t)}_{\sss N}(\beta, B)}\right] 
	- \log \left[\frac{\lambda_+(\beta, B+t)}{\lambda_+(\beta, B)}\right]
	= \frac{1}{N} \log \q  \left[ \frac{\frac{Z^{\sss (t)}_{\sss N}(\beta, B+t)}
	{\left(\lambda_+(\beta, B+t)\right)^N}}
	{\frac{Z^{\sss (t)}_{\sss N}(\beta, B)}{\left(\lambda_+(\beta, B)\right)^N}}\right]\\
	& = \frac{1}{N} \log \q \left[\prod_{i=1}^{K^t_{\sss N}} 
	\frac{1 + \left(r_{\sss B+t}\right)^{L_{\sss N}(i)}}{1 + \left(r_{\sss B}\right)^{L_{\sss N}(i)}} \right],
	\end{align}
where we have used \eqref{tre}, \eqref{zprod}, \eqref{cbarra}, \eqref{stessec}, and, omitting the dependence on $\beta$, we have defined
	\be\label{defalpha}
	r_{\sss B} \, = \, r(\beta, B) \, = \, \frac{\lambda_-(\beta, B)}{\lambda_+(\beta, B)}\;. 
	\ee
Then, recalling \eqref{stessec}, we can rewrite
	\be
	\label{c_diff}
	\overline{c}_{\sss N}(t) 
	= \log \lambda_+(\beta, B+t) - \log \lambda_+(\beta, B) 
	+ \frac{1}{N} \log \q \left[\prod_{i=1}^{K^t_{\sss N}} \frac{1 + \left(r_{\sss B+t}\right)^{L_{\sss N}(i)}}{1 
	+ \left(r_{\sss B}\right)^{L_{\sss N}(i)}} \right]. 
	\ee
The second derivative of \eqref{c_diff} is
	\be
	\label{cbarra2}
	\overline{c}''_{\sss N}(t) \,=\, \frac{\partial^2}{\partial t^2} \log \lambda_+(\beta, B+t) 
	\,+\, \frac{1}{ND_{\sss N}(t)}\left[I_{\sss N}(t) + II_{\sss N}(t) + \frac{III_{\sss N}(t)}{D_{\sss N}(t)}\right],
	\ee
where we have introduced
	\eqan{
	I_{\sss N} (t)&= \q\Big[ \displaystyle \sum_{i=1}^{K^t_{\sss N}} 
	\frac{L_{\sss N}(i)(r_{\sss B+t})^{\sss L_{\sss N}(i)-2}[(L_{\sss N}(i)-1)(r'_{\sss B+t})^2  
	+ r_{\sss B+t}r''_{\sss B+t}]}
	{1 + (r_{\sss B})^{\sss L_{\sss N}(i)}} 
	\prod \limits_{\substack{j=1 \\ j\neq i}}^{K^t_{\sss N}}\frac{1 + (r_{\sss B+t})^{\sss L_{\sss N}(j)}}
	{1 + (r_{\sss B})^{\sss L_{\sss N}(j)}} \Big],\\
	II_{\sss N}(t)  &=  \q \Big[ \displaystyle \sum_{i=1}^{K^t_{\sss N}}
	\sum \limits_{\substack{j=1 \\ j\neq i}}^{K^t_{\sss N}} 
	\frac{L_{\sss N}(i)L_{\sss N}(j) (r_{\sss B+t})^{\sss L_{\sss N}(i)+L_{\sss N}(j)-2}(r'_{\sss B+t})^2}
	{\left(1 + (r_{\sss B})^{\sss L_{\sss N}(i)}\right)\left(1 + (r_{\sss B})^{\sss L_{\sss N}(j)}\right)} 
	\prod \limits_{\substack{l=1 \\ l\neq i,j}}^{K^t_{\sss N}}
	\frac{1 + (r_{\sss B+t})^{\sss L_{\sss N}(l)}}{1 + (r_{\sss B})^{\sss L_{\sss N}(l)}} \Big],\\
	III_{\sss N}(t) &=  \Big[\q \Big(\displaystyle 
	\sum_{i=1}^{K^t_{\sss N}}\frac{L_{\sss N}(i)(r_{\sss B+t})^{\sss L_{\sss N}(i)-1} r'_{\sss B+t}}
	{1 + (r_{\sss B})^{\sss L_{\sss N}(i)}}
	\prod \limits_{\substack{j=1 \\ j\neq i}}^{K^t_{\sss N}}\frac{1 + (r_{\sss B+t})^{\sss L_{\sss N}(j)}}
	{1 + (r_{\sss B})^{\sss L_{\sss N}(j)}}\Big)\Big]^2,\\
	D_{\sss N} (t) &=  \q\Big[\displaystyle \prod_{i=1}^{K^t_{\sss N}}
	\frac{1 + (r_{\sss B+t})^{\sss L_{\sss N}(i)}}{1 + (r_{\sss B})^{\sss L_{\sss N}(i)}}\Big].
	}

All these terms are of a similar structure. The final step in the proof of the theorem requires to compute the limit of the second term in the right-hand-side of \eqref{cbarra2}. This step is taken in the following lemma:
\begin{lemma}[]
\label{limc2}
Let $t>0$ and let $(t_{\sss N})_{N\geq 1}$ be a sequence of real numbers such that $t_{\sss N} \in [0 , t/\sqrt{N}]$. Then,
	\be
	\lim_{N \rightarrow \infty} 
	\label{necessary-bd}
	\frac{1}{ND_{\sss N}(t_{\sss N})}\left[I_{\sss N}(t_{\sss N}) + II_{\sss N}(t_{\sss N}) + 
	\frac{III_{\sss N}(t_{\sss N})}{D_{\sss N}(t_{\sss N})}\right] = 0.
	\ee
\end{lemma}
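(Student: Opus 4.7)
\bigskip
\noindent\textbf{Proof plan for Lemma \ref{limc2}.}

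My plan is to show that each of the three terms $I_{\sss N}(t_{\sss N})$, $II_{\sss N}(t_{\sss N})$, $III_{\sss N}(t_{\sss N})$ is at most poly-logarithmic in $N$, while the denominator $D_{\sss N}(t_{\sss N})$ tends to $1$; dividing by $N$ then gives the result. The central observation is that, since $\beta>0$, one has $0 < \lambda_{-}(\beta,B) < \lambda_{+}(\beta,B)$, so $r_{\sss B}=\lambda_-/\lambda_+\in(0,1)$. By continuity of $\lambda_\pm$ in $B$, for $N$ large enough and any $t_{\sss N}\in[0,t/\sqrt{N}]$, one has $r_{\sss B+t_N}\in[0,\rho]$ for some fixed $\rho<1$, and moreover $r'_{\sss B+t_N}, r''_{\sss B+t_N}$ are uniformly bounded. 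In particular, for each integer $k\geq 0$ the constant $M_k := \sup_{L\geq 1} L^k\rho^{L}$ is finite, which is what makes every single summand below $O(1)$ in the cycle length $L_{\sss N}(i)$.

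First I would bound each summand. In $I_{\sss N}$, the numerator is a sum of two terms of the shape $L(L-1)(r_{\sss B+t_N})^{L-2}$ and $L(r_{\sss B+t_N})^{L-1}$ multiplied by bounded functions, so each summand is bounded by a constant $C_1$ independent of $L_{\sss N}(i)$; similarly each summand of $II_{\sss N}$ is bounded by $C_2$ and the inner sum defining $III_{\sss N}^{1/2}$ has summands bounded by $C_3$. Hence, pulling these pointwise bounds out,
\begin{equation}
\bigl|I_{\sss N}(t_{\sss N})\bigr|\le C_1\, \q\!\Bigl[K^t_{\sss N}\!\!\prod_{j\neq i_\star}\!\!\tfrac{1+r_{\sss B+t_N}^{L_{\sss N}(j)}}{1+r_{\sss B}^{L_{\sss N}(j)}}\Bigr],\quad
\bigl|II_{\sss N}(t_{\sss N})\bigr|\le C_2\, \q\!\Bigl[(K^t_{\sss N})^2\!\!\prod_{l\neq i_\star,j_\star}\!\!\tfrac{1+r_{\sss B+t_N}^{L_{\sss N}(l)}}{1+r_{\sss B}^{L_{\sss N}(l)}}\Bigr],
\nonumber
\end{equation}
and analogously for $III_{\sss N}$, where $i_\star,j_\star$ indicate that one or two factors of the full product are missing. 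Since each individual factor $(1+r_{\sss B+t_N}^{L})/(1+r_{\sss B}^{L})$ lies in $[(1-\rho)/(1+\rho),\,1+\rho]$, the missing factors only cost a universal constant and can be reinserted to give the full product $\prod_{j=1}^{K^t_{\sss N}}$.

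The second step is to control that full product and the denominator $D_{\sss N}(t_{\sss N})$. Using $r_{\sss B+t_N}-r_{\sss B}=O(t_N)=O(1/\sqrt{N})$, the mean value theorem yields $|r_{\sss B+t_N}^L-r_{\sss B}^L|\le C L\rho^{L-1}/\sqrt{N}\le C'/\sqrt{N}$ uniformly in $L$, and hence
\begin{equation}
\log\prod_{j=1}^{K^t_{\sss N}}\frac{1+r_{\sss B+t_N}^{L_{\sss N}(j)}}{1+r_{\sss B}^{L_{\sss N}(j)}}
\,=\,O\!\Bigl(\tfrac{K^t_{\sss N}}{\sqrt{N}}\Bigr)\quad\text{a.s.}
\nonumber
\end{equation}
Next I would exploit the representation $K^t_{\sss N}=\sum_{j=1}^N I_j$ with independent Bernoullis from \eqref{bernI} to compute the moment generating function $\q[e^{aK^t_{\sss N}}]=\prod_{i=1}^N\bigl(1+(e^a-1)/(2N-2i+1)\bigr)$. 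Applied with $a=C/\sqrt{N}$, the same asymptotic as in \eqref{lim_av_k}, \eqref{dueallaK} gives $\q\bigl[\exp(aK^t_{\sss N})\bigr]=\exp\!\bigl(O(\log N/\sqrt{N})\bigr)\to 1$. Combined with the Jensen-type lower bound $D_{\sss N}(t_{\sss N})\ge \exp(-C\q(K^t_{\sss N})/\sqrt{N})\to 1$, this shows $D_{\sss N}(t_{\sss N})\to 1$.

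Finally, Cauchy--Schwarz together with the elementary estimates $\q(K^t_{\sss N})\sim\tfrac12\log N$ and $\text{Var}_\q(K^t_{\sss N})=O(\log N)$ (both immediate from \eqref{bernI}) gives
\begin{equation}
\q\!\Bigl[(K^t_{\sss N})^k\exp\!\bigl(O(K^t_{\sss N}/\sqrt{N})\bigr)\Bigr]
\,\le\,\bigl(\q[(K^t_{\sss N})^{2k}]\bigr)^{\!1/2}\bigl(\q[e^{O(K^t_{\sss N}/\sqrt{N})}]\bigr)^{\!1/2}
\,=\,O\!\bigl((\log N)^k\bigr),
\nonumber
\end{equation}
for $k=1,2$. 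Plugging this into the bounds on $I_{\sss N},II_{\sss N},III_{\sss N}$ yields $|I_{\sss N}(t_{\sss N})|=O(\log N)$, $|II_{\sss N}(t_{\sss N})|=O((\log N)^2)$ and $|III_{\sss N}(t_{\sss N})|=O((\log N)^2)$. Since $D_{\sss N}(t_{\sss N})\to 1$, dividing the bracket by $N D_{\sss N}(t_{\sss N})$ produces a quantity of order $(\log N)^2/N$, which vanishes as $N\to\infty$. The main obstacle I anticipate is a clean bookkeeping of the products with one or two factors missing, uniformly in $t_{\sss N}$; the trick above of pulling out the missing factors using the uniform two-sided bound $(1+r^{L})/(1+r'^{L})\in[(1-\rho)/(1+\rho),1+\rho]$ is what makes this routine.
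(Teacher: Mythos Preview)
Your argument is correct and, in fact, slightly cleaner than the paper's. The two proofs agree on the first step: using $0<r_{\sss B}<1$ to see that each summand in $I_{\sss N},II_{\sss N},III_{\sss N}$ is $O(1)$ in the cycle length, so that what remains is the full product $\prod_j (1+r_{\sss B+t_{\sss N}}^{L_{\sss N}(j)})/(1+r_{\sss B}^{L_{\sss N}(j)})$ weighted by powers of $K^t_{\sss N}$. They diverge in how this product is handled. The paper introduces a cutoff length $l$ and bounds each factor by $1+\big((r_{\sss B+t_{\sss N}})^{l}\vee \delta_{\sss N}(l)\big)$, so that the product is $\le \e^{\vep K^t_{\sss N}}$ for arbitrarily small $\vep$; since $\q(\e^{\vep K^t_{\sss N}})$ grows like a small power of $N$, and $D_{\sss N}$ is bounded below by a negative small power of $N$, one concludes by balancing these powers against the $1/N$ prefactor. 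You instead exploit the specific rate $t_{\sss N}=O(1/\sqrt{N})$: via the mean value theorem, $|r_{\sss B+t_{\sss N}}^L-r_{\sss B}^L|\le C/\sqrt{N}$ uniformly in $L$, so the whole product is $\e^{O(K^t_{\sss N}/\sqrt{N})}$. This immediately gives $D_{\sss N}(t_{\sss N})\to 1$ and, after Cauchy--Schwarz and the elementary moment bounds on $K^t_{\sss N}$, the sharper estimates $|I_{\sss N}|=O(\log N)$, $|II_{\sss N}|,|III_{\sss N}|=O((\log N)^2)$.

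What each approach buys: the paper's cutoff argument is more robust in that it would still work for any sequence $t_{\sss N}\to 0$, not just $t_{\sss N}=O(1/\sqrt{N})$, at the cost of an extra parameter and polynomial balancing. Your route uses the available rate to avoid the cutoff entirely and obtain $D_{\sss N}\to 1$, which makes the bookkeeping shorter. One cosmetic point: your two-sided bound on a single factor should read $[1/(1+\rho),\,1+\rho]$ rather than $[(1-\rho)/(1+\rho),\,1+\rho]$; the stated lower bound is still valid but not sharp, and either way the ``reinsertion of missing factors'' step goes through unchanged.
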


\begin{proof}
We first consider the term $I_{\sss N}(t_{\sss N})$. From \eqref{defalpha}, it is clear that $0<r(\beta,B)<1$. As a consequence, the terms $L_{\sss N}(i)(L_{\sss N}(i)-1)(r_{\sss B+t_{\sss N}})^{\sss L_{\sss N}(i)-2}$ and $ L_{\sss N}(i)(r_{\sss B+t_{\sss N}})^{\sss L_{\sss N}(i)-1}$, appearing in the numerator of $I_{\sss N}(t)$ are uniformly bounded in $N$.  Moreover, also the sequences  $(r'_{\sss B+t_{\sss N}})_{\sss N}$ and $ (r''_{\sss B+t_{\sss N}})_{\sss N}$, being convergent, are bounded. Therefore, there exists a constant $C>0$ such that
	\be
	\left|L_{\sss N}(i)(L_{\sss N}(i)-1)(r_{\sss B+t_{\sss N}})^{\sss L_{\sss N}(i)-2}(r'_{\sss B+t_{\sss N}})^2  
	+ L_{\sss N}(i)(r_{\sss B+t_{\sss N}})^{\sss L_{\sss N}(i)-1} r''_{\sss B+t_{\sss N}}\right|  \, \leq \, C.
	\ee
Therefore, also
	\be
	\sum_{i=1}^{K^t_{\sss N}}\frac{\left|L_{\sss N}(i)(L_{\sss N}(i)-1)
	(r_{\sss B+t_{\sss N}})^{\sss L_{\sss N}(i)-2}(r'_{\sss B+t_{\sss N}})^2  
	+ L_{\sss N}(i)(r_{\sss B+t_{\sss N}})^{\sss L_{\sss N}(i)-1} 
	r''_{\sss B+t_{\sss N}}\right| }{1 + (r_{\sss B})^{\sss L_{\sss N}(i)}}\, 
	\leq \,C \cdot K^t_{\sss N}, 
	\ee
and thus
	\be
	\left|I_{\sss N}(t_{\sss N})\right| \, \leq \, C \cdot \q \left(K^t_{\sss N} \, 
	\prod_{i=1}^{K^t_{\sss N}}\frac{1 + (r_{\sss B+t_{\sss N}})^{\sss L_{\sss N}(i)}}
	{1 + (r_{\sss B})^{\sss L_{\sss N}(i)}}\right).
	\ee
We now proceed by estimating first each term appearing in the product. Let us fix an arbitrary length $l\in \mathbb{N}$ to be chosen later on. Recalling that $0<r_{\sss B}<1$ we obtain
	\begin{equation} 
	\frac{1 + (r_{\sss B+t_{\sss N}})^{\sss L_{\sss N}(j)}}{1 + (r_{\sss B})^{\sss L_{\sss N}(j)}}\, 
	\leq\, \left\{\begin{array}{ll} 1 + (r_{\sss B+t_{\sss N}})^{\sss l}  \quad \quad \; \; \; 
	\mbox{if}\; L_{\sss N}(j) > l,\\ 1+ \delta_{\sss N}(l) \quad \quad \quad \quad  
	\mbox{if}\; L_{\sss N}(j) \leq l, \end{array}\right.  
	\end{equation}
where $\displaystyle \delta_{\sss N}(l) := \max_{k \leq l} \left[\frac{1 + (r_{\sss B+t_{\sss N}})^{\sss k}}{1 + (r_{\sss B})^{\sss k}} -1\right] \, \stackrel{N \rightarrow \infty }{\longrightarrow}\, 0$. Then,
	\be\label{stima_esp}
	\frac{1 + (r_{\sss B+t_{\sss N}})^{\sss L_{\sss N}(j)}}{1 + (r_{\sss B})^{\sss L_{\sss N}(j)}}\, 
	\leq\, 1 + \left((r_{\sss B+t_{\sss N}})^{\sss l} \vee \delta_{\sss N}(l) \right) \, 
	\leq\, \e^{\left( (r_{\sss B+t_{\sss N}})^{\sss l} \vee \delta_{\sss N}(l) \right)},
	\ee
where $a \vee b=\max \{ a,b\}$. Finally, using (\ref{stima_esp}) and the Cauchy-Schwarz inequality
	\begin{align}\label{stima_I_N}
	\left|I_{\sss N}(t_{\sss N})\right| \, & \leq C \cdot \q \Big(K^t_{\sss N} \, 
	\prod \limits_{i=1}^{K^t_{\sss N}} \frac{1 + (r_{\sss B+t_{\sss N}})^{\sss L_{\sss N}(i)}}
	{1 + (r_{\sss B})^{\sss L_{\sss N}(i)}}\Big) 
	\leq C \cdot  \q \left(K^t_{\sss N} \, \e^{\sum_{i=1}^{K^t_{\sss N}} 
	\left((r_{\sss B+t_{\sss N}})^{\sss l} \vee \delta_{\sss N}(l) \right)}\right) \, \\ \nonumber
	&\leq C \cdot  \q \left(K^t_{\sss N} \, \e^{K^t_{\sss N} 
	\left( (r_{\sss B+t_{\sss N}})^{\sss l} \vee \delta_{\sss N}(l) \right)}\right) 
	 \leq C \cdot \left[\q \left(\left(K^t_{\sss N}\right)^2\right)\right]^{1/2} 
	\left[\q\left(\e^{2K^t_{\sss N} 
	\left((r_{\sss B+t_{\sss N}})^{\sss l} \vee \delta_{\sss N}(l) \right)}\right) \right]^{1/2}.
	\end{align}
We next consider $D_{\sss N}(t_{\sss N})$. As before, 
	\begin{equation} 
	\frac{1 + (r_{\sss B+t_{\sss N}})^{\sss L_{\sss N}(j)}}{1 + (r_{\sss B})^{\sss L_{\sss N}(j)}}\, \geq\, 	
	\left\{\begin{array}{ll} 
	\displaystyle (1 + (r_{\sss B})^{\sss l})^{-1}\quad \quad \quad  \; 
	\mbox{if}\; L_{\sss N}(j) > \, l \, ,\\  (1+ \bar{\delta}_{\sss N}(l))^{-1} \quad \quad \quad \; 
	\mbox{if}\; L_{\sss N}(j) \leq  \, l\, , \end{array}\right.  
	\end{equation}
where now $\bar{\delta}_{\sss N}(l) \, := \, \min_{k \leq l} \left[ \frac{1 + (r_{\sss B})^{\sss k}}{1 + (r_{\sss B+t_{\sss N}})^{\sss k}} -1  \right] \stackrel{N \rightarrow \infty }{\longrightarrow}\, 0.$
Then,
	\be
	\frac{1 + (r_{\sss B+t_{\sss N}})^{\sss L_{\sss N}(j)}}{1 + (r_{\sss B})^{\sss L_{\sss N}(j)}}\, 
	\geq\, \frac{1}{1 + \left((r_{\sss B})^{\sss l} \vee \bar{\delta}_{\sss N}(l) \right)} \, 
	\geq \, \e^{- \left((r_{\sss B})^{\sss l} \vee \bar{\delta}_{\sss N}(l) \right)},
	\ee
because $\, \frac{1}{1+x} \geq \e^{-x} \;$ for $\,x>-1$. From the previous bound, we obtain
	\be\label{stima_D_N}
	D_{\sss N} (t_{\sss N})\, \geq \, \q\Big(\displaystyle \prod_{i=1}^{K^t_{\sss N}} 
	\e^{- \left((r_{\sss B})^{\sss l} \vee \bar{\delta}_{\sss N}(l) \right)}  \Big) \, 
	=  \, \q \left(\e^{- \sum_{i=1}^{K^t_{\sss N}} \left((r_{\sss B})^{\sss l} \vee \bar{\delta}_{\sss N}(l) \right)}\right) \,  
	= \, \q \left(\e^{- K^t_{\sss N} \left((r_{\sss B})^{\sss l} \vee \bar{\delta}_{\sss N}(l) \right)}\right).
	\ee
Collecting (\ref{stima_I_N}) and (\ref{stima_D_N}) we obtain
	\begin{align}\label{maggCM2asimpt}
	\lim_{N \rightarrow \infty} \, \frac{\left|I_{\sss N}(t_{\sss N})\right|}{N D_{\sss N}(t_{\sss N})} \, 
	&\leq \,  \lim_{N \rightarrow \infty} \, \frac{C \cdot \left[\q \left(\left(K^t_{\sss N}\right)^2\right)\right]^{1/2}
	\left[\q\left(\e^{2K^t_{\sss N} \left((r_{\sss B+t_{\sss N}})^{\sss l} \vee \delta_{\sss N}(l) \right)}\right) \right]^{1/2}}
	{N \cdot \q \left(\e^{- K^t_{\sss N} \left((r_{\sss B})^{\sss l} \vee \bar{\delta}_{\sss N}(l) \right)}\right)}. 
	\end{align}
The averages in the right-hand side can be bounded using the distribution of $K^{t}_{\sss N}$ in \eqref{kappa_N} and \eqref{bernI}. Indeed, using that, for fixed $l$, $\delta_{\sss N}(l)\to 0$ and that $(r_{\sss B+t_{\sss N}})^{\sss l}$ can be made small by taking $l$ large, 
we can make $(r_{\sss B})^{\sss l} \vee \bar{\delta}_{\sss N}(l)\leq \vep$ for $N$ large enough. Thus,
	\be
	\q\left(\e^{2K^t_{\sss N} \left((r_{\sss B+t_{\sss N}})^{\sss l} \vee \delta_{\sss N}(l) \right)}\right)
	\leq \prod_{i=1}^N\Big(1 + \frac{\left(\e^{\vep} -1\right)}{2(N -i) + 1}\Big)
	=\frac{\Gamma \left (N +\frac{1}{2}\e^{\vep} \right)\sqrt{\pi}}
	{\Gamma(N +\frac{1}{2})\Gamma(\frac 1 2\e^{\vep})}\sim N^{\frac{1}{2}(\e^{\vep} -1)},
	\ee
where the product is expressed in terms of the Gamma function, and the asymptotic relation $\frac{\Gamma(N+a)}{\Gamma(N+b)}=N^{a-b}(1+o(1))$, valid when $a,b$ are uniformly bounded, has been used.


In a similar fashion the average in the denominator of  \eqref{maggCM2asimpt} can be bounded by
	\be
	\q \left(\e^{- K^t_{\sss N} \left((r_{\sss B})^{\sss l} \vee \bar{\delta}_{\sss N}(l) \right)}\right) 
	\geq N^{\frac{1}{2}(\e^{ - \vep} -1)},
	\ee
while $\left[\q \left(\left(K^t_{\sss N}\right)^2\right)\right]^{1/2}=O(\log N)$. Thus,  we conclude that the right-hand side of \eqref{maggCM2asimpt} is bounded by $O(\log N) N^{\frac{3}{4}(\e^{\vep}-1)-1}=o(1)$ by taking $\vep>0$ to be sufficiently small. This proves the bound on $I_{\sss N}(t_{\sss N})$ in \eqref{necessary-bd}.
With similar calculations, that we omit for the sake of brevity,  we can also show that the terms involving $II_{\sss N}(t_{\sss N})$ and $III_{\sss N}(t_{\sss N})$ vanish, thus
concluding the proof of  Lemma \ref{limc2}.
\end{proof}

\noindent
{\it  Proof of Theorem \ref{CLT_an1}.} Equations \eqref{popi}, \eqref{cbarra2} and Lemma \ref{limc2} complete the proof of the averaged quenched CLT, since
	\begin{align}
	\lim_{N \rightarrow \infty} \log \p \left[\exp\left(t \, \frac{S_{\sss N} - P_{\sss N}(S_{\sss N})}{\sqrt{N}}\right) \right] \, 
	&= \, \lim_{N \rightarrow \infty} \frac{t^2}{2}\overline{c}''_{\sss N}(t_{\sss N}) \,\nonumber 
	= \, \frac{t^2}{2}\cdot \left.\frac{\partial^2}{\partial t^2} \log \lambda_+(\beta, B+t)\right|_{t=0}\\
	&= \, \frac{t^2}{2}\cdot \frac{\cosh(B) \e^{-4\beta}}{(\sinh(B)+\e^{-4\beta})^{3/2}}.
	\end{align}
Since the thermodynamic limits of the two cumulant generating functions $c(t)$ and $\overline{c}(t)$ are the same, cf. (\ref{stessec2}), also the variances of the limiting normal distribution in the random quenched and the averaged quenched are equal.
\qed

\section{Proofs for $\CMNonetwo$}
\label{four}
In this section, we consider the configuration model $\CMNonetwo$, introduced in Section 1.2. In this graph, the connected components are either  tori $(t)$ connecting vertices of degree $2$, or lines $(l)$ formed joining vertices with degree 2 and ending with two vertices with degree 1. In order to state some properties of the number of lines and tori, we need to introduce some notation. Recall that the number of vertices of degree 1 and 2 are given by
	\begin{align}
	\label{defn1n2}
	n_1 &\;:=\;  \# \left\{i \in [N] \colon d_i = 1\right\}\,=\, N - \lfloor pN \rfloor,\qquad
	n_2  \;:=\;  \# \, \left\{i \in [N] \colon d_i =2\right\}\,=\, \lfloor pN\rfloor,
	\end{align}
and the total degree of the graph
	\be
	\label{defln}
	\ell_{\sss N} \; =\; \sum_{i \in [N]} d_{i} \,=\, 2n_2 + n_1 \,=\, N+\lfloor pN\rfloor.
	\ee
The number of edges is given by $\ell_{\sss N}/2$, so we assume $n_1$ to be even.  Let us also denote by $K_{\sss N}$ the number of connected components in the graph and by $K_{\sss N}^{\sss (l)}$ and $K_{\sss N}^{\sss (t)}$ the number lines and tori. Obviously,
	\be
	K_{\sss N}= K_{\sss N}^{\sss (l)} + K_{\sss N}^{\sss (t)}.
	\ee
Because every line uses up two vertices of degree 1, the number of lines is given by $n_1/2$, i.e., $K_{\sss N}^{\sss (l)} = (N - \lfloor pN \rfloor)/2,$ almost surely. Regarding the number of cycles, we have that $K_{\sss N}^{\sss (t)}/N$ has the same distribution  of $K_{\bar{N}}^t/N$, where $\overline{N}$ is the (random) number of vertices with degree 2 that do not belong to any line and $K_{\bar{N}}^t$ is the number of tori on this set of vertices.  Then, since this subset forms a $\mathrm{CM}_{\sss \bar{N}}(\mathrm{\textbf{2}})$ graph, we can apply  (\ref{kappaNsuN}), obtaining that $K_{\sss N}^{\sss (t)}/N\stackrel{\mathbb{P}}{\longrightarrow} 0$, so that also
	\be
	\frac{K_{\sss N}}{N} \stackrel{\mathbb{P}}{\longrightarrow} \frac{(1-p)}{2}.
	\ee
The partition function can be computed as in the case of $\CMNtwo$, and is given by
	\be
	\label{zcm12}
	Z_{\sss N}(\beta, B) = \prod_{i=1}^{K_{\sss N}^{\sss (l)}} Z_{L^{\sss (l)}_{\sss N}(i)}^{\sss (l)} (\beta, B) \cdot 	
	\prod_{i=1}^{K_{\sss N}^{\sss (t)}} Z_{L^{\sss (t)}_{\sss N}(i)}^{\sss (t)} (\beta, B),
	\ee
where by (\ref{cinque5}) and (\ref{tre})
	\be
	Z_{L^{\sss (l)}_{\sss N}(i)}^{\sss (l)} (\beta, B) 
	= \Big(A_+ \lambda_+^{L_{\sss N}^{\sss (l)}(i)} + A_- \lambda_-^{L_{\sss N}^{\sss (l)}(i)}\Big),\qquad 
	Z_{L^{\sss (t)}_{\sss N}(i)}^{\sss (t)} (\beta, B) 
	= \Big(\lambda_+^{L_{\sss N}^{\sss (t)}(i)} + \lambda_-^{L_{\sss N}^{\sss (t)}(i)}\Big),
	\ee
and $L^{\sss (l)}_{\sss N}(i)$ and $L^{\sss (t)}_{\sss N}(j)$ are the lengths (i.e., the number of vertices) of the $i$th line and $j$th torus (in any arbitrary labeling).

\subsection{Quenched results: Proof of Theorem \ref{CLT_an2}}

\subsubsection{Quenched pressure}
Starting from  the expression of the partition function \eqref{zcm12} and with some algebraic manipulations, we obtain the random quenched pressure at volume $N$ as
	\eqan{\label{tori_linee}
	\psi_{\sss N}(\beta,B) 
	\, = \, &\log \lambda_+(\beta,B) + \sum_{l=2}^{N} p_l^{\sss{(N)}} 	
	\log \left(A_+(\beta,B) + A_-(\beta,B) \left(r_{\sss B}\right)^{l}\right)\\
	& + \frac{1}{N} \sum_{i=1}^{K_{\sss N}^{\sss (t)}} \log \left(1 + \left(r_{\sss B}\right)^{L_{\sss N}^{\sss (t)}(i)} \right),\nn
	}
where the first sum is taken over the possible line lengths and we define
	\be\label{p_elle}
	p_l^{\sss{(N)}} \,:=\,  \frac{1}{N} \sum_{i=1}^{K_{\sss N}^{\sss (l)}} 
	\mathbbm{1}_{\{L_{\sss N}^{\sss (l)}(i) = l\}}
	\ee
to be the normalized number of lines of length $l$ in the $\CMNonetwo$ graph.  The random variables $(p_l^{\sss{(N)}})_{l\geq 2}$ play a fundamental role in the proofs, therefore we start by investigating their behavior in the thermodynamic limit:

\begin{lemma}[Convergence of lines of given lengths]
\label{p_l_1} 
For every \,$ l \geq 2 $,\, as \,$N \rightarrow \infty$,
	\be
	p_l^{\sss{(N)}} \stackrel{\mathbb{P}}{\longrightarrow} p_l^* 
	\,= \, \left(\frac{2p}{p + 1}\right)^{l-2} \left(\frac{1-p}{p +1}\right) \left(\frac{1-p}{2}\right).
	\ee
\end{lemma}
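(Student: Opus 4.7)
The plan is to apply the first and second moment method directly to the uniform pairing that defines $\CMNonetwo$. Writing $N_l := \sum_{i=1}^{K_{\sss N}^{\sss (l)}} \mathbbm{1}_{\{L_{\sss N}^{\sss (l)}(i) = l\}}$, so that $p_l^{\sss{(N)}} = N_l/N$, the first step is to compute $\expec[N_l]$ exactly. A line of length $l \ge 2$ consists of two degree-$1$ endpoints and $l-2$ internal degree-$2$ vertices joined by $l-1$ consecutive edges. The number of ordered vertex sequences meeting these degree constraints is $n_1(n_1-1) \prod_{j=0}^{l-3}(n_2 - j)$ (the product being empty for $l=2$), to be divided by $2$ for the two orientations of each line. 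For each such sequence, there are $2^{l-2}$ ways to choose which half-edges implement the required adjacencies, and the probability that a prescribed set of $l-1$ edges all appear in the uniform pairing of $\ell_{\sss N}$ half-edges is $\prod_{j=1}^{l-1}(\ell_{\sss N} - 2j + 1)^{-1}$. Combining, and using $n_1 = (1-p)N + O(1)$, $n_2 = \lfloor pN\rfloor$ and $\ell_{\sss N} = (1+p)N + O(1)$, a direct asymptotic estimate yields
\begin{equation}
\expec\!\left[p_l^{\sss{(N)}}\right] = \frac{\expec[N_l]}{N} \;\longrightarrow\; \frac{(1-p)^2 (2p)^{l-2}}{2(1+p)^{l-1}} = p_l^*.
\end{equation}
The count is exact (rather than a lower bound) because each internal vertex has degree exactly $2$ and each endpoint degree exactly $1$, so the prescribed edges being present forces the induced subgraph to be the whole maximal line.

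For concentration, I would compute $\expec[N_l(N_l - 1)]$ by summing over ordered pairs of vertex-disjoint line configurations, using the pairing probability with $2(l-1)$ prescribed edges. Since
\begin{equation}
\frac{\prod_{j=1}^{2(l-1)}(\ell_{\sss N} - 2j + 1)^{-1}}{\bigl[\prod_{j=1}^{l-1}(\ell_{\sss N} - 2j + 1)^{-1}\bigr]^2} = 1 + O(1/N),
\end{equation}
and the vertex-disjointness correction in the ordered-tuple count is $1 + O(1/N)$, one obtains $\expec[N_l(N_l-1)] = (\expec[N_l])^2 (1 + o(1))$. Hence $\Var(N_l) = o(N^2)$, so $\Var(p_l^{\sss{(N)}}) \to 0$, and Chebyshev's inequality combined with the first-moment convergence yields $p_l^{\sss{(N)}} \convp p_l^*$.

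The only real obstacle is bookkeeping: showing that pairs of line configurations that are \emph{not} vertex-disjoint contribute negligibly to $\expec[N_l^2]$. Sharing even one vertex reduces the number of candidate ordered tuples by a factor of $N$ while the pairing probabilities change only by a constant factor, so these pairs contribute $O(N)$ to $\expec[N_l^2]$, which is $o(N^2)$. As an independent cross-check, $p_l^*$ can be derived by an exploration argument: starting from a uniformly chosen degree-$1$ half-edge and sequentially revealing its partner, the probability that the partner is a degree-$2$ half-edge (the line continues) tends to $2p/(1+p)$, while the probability that it is a degree-$1$ half-edge (the line terminates) tends to $(1-p)/(1+p)$. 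The resulting geometric law for the line length, combined with the asymptotic density $K_{\sss N}^{\sss (l)}/N = (1-p)/2$, reproduces $p_l^*$ exactly.
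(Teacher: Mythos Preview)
Your proof is correct and uses the same second moment method as the paper. The only difference is cosmetic: the paper exploits that the number of lines $K_{\sss N}^{\sss (l)}=n_1/2$ is deterministic and writes $\q(p_l^{\sss{(N)}})=\frac{K_{\sss N}^{\sss (l)}}{N}\,\q(L_{\sss N}(1)=l)$, then computes $\q(L_{\sss N}(1)=l)$ and $\q(L_{\sss N}(2)=l\mid L_{\sss N}(1)=l)$ by the sequential exploration you mention as a cross-check, whereas you count vertex/half-edge configurations directly. One small simplification: your worry about non-vertex-disjoint pairs is moot, since two \emph{distinct} maximal lines in $\CMNonetwo$ are automatically vertex-disjoint, so such pairs contribute exactly $0$ (not merely $O(N)$) to $\expec[N_l(N_l-1)]$.
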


\begin{proof} The result can be obtained by applying the Second Moment Method, which states that the convergence of the averages $ \q \left(p_l^{\sss{(N)}}\right) \to \, p_l^* $ and the vanishing of the variances $\mathrm{Var}_{\sss \q}\left(p_l^{\sss{(N)}}\right){\to} 0$ together imply the required convergence in probability. We start by computing the average of  $p^{\sss(N)}_l$. From \eqref{p_elle}, we obtain
	\be
	\label{avplL}
	\q (p_l^{\sss{(N)}}) \,=\, \frac{\q(K_{\sss N} (l))}{N} \, \q\left(L_{\sss N}(1)=l\right)
	\,=\, \frac{(1-p)}{2} \, \q\left(L_{\sss N}(1)=l\right)(1+o(1)),
	\ee
because $K_{\sss N}^{\sss (l)} /N\rightarrow (1-p)/2 \; \, \mbox{a.s.}$, and the distribution of lengths $L^{\sss (l)}_{\sss N}(i)$ of the $i$th line is independent of the label $i$. The average on the right hand side of the previous equation can be computed as
	\begin{align}
	\label{avLl}
	\q\left(L_{\sss N}(1)=l\right)\, 
	& = \, \left(\prod_{i=0}^{l-3}\frac{2n_2-2i}{\ell_{\sss N} -1-2i} \right) \left(\frac{n_1-1}{\ell_{\sss N} -1-2(l-2)}\right).
	\end{align}
Recalling \eqref{defn1n2} we have that $n_2/N\to p$ and $n_1/N\to 1-p$ as $N\to \infty$, therefore, by definition \eqref{defln} of $\ell_N$, we obtain that  $\q\left(L_{\sss N}(1)=l\right)\to \left(\frac{2p}{p+1}\right)^{l-2} \left(\frac{1-p}{p+1}\right)$, concluding the computation of the limit of $\q(p_l^{\sss (N)})$. Also the variance of \eqref{p_elle} can be computed explicitly. Indeed, exploiting the fact that $K_{\sss N}^{\sss (l)}$ is a.s.\ constant, \col{we obtain}
	\begin{align}\label{varpln}
	\mathrm{Var}_{\sss \q} (p_l^{\sss{(N)}}) 
	& = \frac{1}{N^2} K_{\sss N}^{\sss (l)} \q\left(L_{\sss N}(1)=l\right) \left(1- \q\left(L_{\sss N}(1)=l\right)\right) \\ 	
	\nonumber
	&\quad \, + \frac{1}{N^2} K_{\sss N}^{\sss (l)} \left(K_{\sss N}^{\sss (l)} - 1\right) 
	\left(\q\left(L_{\sss N}(1)=L_{\sss N}(2)= l\right) - \q\left(L_{\sss N}(1)=l\right)^2 \right)\\
	\nonumber
	& = \frac{1}{N^2}K_{\sss N}^{\sss (l)} \q\left(L_{\sss N}(1)=l\right) \left(1- \q\left(L_{\sss N}(1)=l\right)\right)\\
	\nonumber
	&\quad \, + \frac{1}{N^2} K_{\sss N}^{\sss (l)} \left(K_{\sss N}^{\sss (l)} - 1\right) 
	\q\left(L_{\sss N}(1)=l\right)\left[ \left.\q\left(L_{\sss N}(2) = l\right| L_{\sss N}(1)=l \right) 
	- \q\left(L_{\sss N}(1)=l\right) \right],
	\end{align}
where conditional probability in the previous equation is given by
	\be
	\label{cond_pr_lungh_linee}
	\q( \left. L_{\sss N}(2) = l \right| L_{\sss N}(1)=l) = \Big(\prod_{i=0}^{l-3}\frac{2n_2-2(l-2) -2i}
	{\ell_{\sss N} -2(l-2)-3-2i} \Big)  \Big(\frac{n_1-3}{\ell_{\sss N} -2(l-2) -3-2(l-2)}\Big). 
	\ee
Again recalling \eqref{defn1n2}  and \eqref{defln}, we see that the conditional probability  $\q\left( \left. L_{\sss N}(2) = l \right| L_{\sss N}(1)=l \right) $ converges to $ \left(\frac{2p}{p+1}\right)^{l-2} \left(\frac{1-p}{p+1}\right)$
as $N\to \infty$, so that $\mathrm{Var}_{\sss \q} (p_l^{\sss{(N)}})\,\stackrel{N \rightarrow \infty}{\longrightarrow}\, 0.$
\end{proof}
\medskip

\noindent
With the help of the previous lemma  we will be able to control the contribution of the lines to  $\psi_{\sss N}(\beta,B)$ in the thermodynamic limit  (second term in the right-hand side of \eqref{tori_linee}).  From the next lemma, we deduce that in (\ref{tori_linee}) the contribution of the tori is negligible when $N \rightarrow \infty$:

\begin{lemma}[Bounding the contribution due to tori]
\label{negli}
Conditionally on $L_{\sss N}^{\sss (l)}(1),\ldots,L_{\sss N}^{\sss (l)}(K_{\sss N}^{\sss (l)})$, the number of tori $K_{\sss N}^{\sss (t)}$ in the $\CMNonetwo$ graph is stochastically smaller than the number of tori $K^t_{\sss N}$ in $\CMNtwo$ (and we write $K_{\sss N}^{\sss (t)} \preceq K^t_{\sss N}$). Therefore,
	\be
	\frac{1}{N} \sum_{i=1}^{K_{\sss N}^{\sss (t)}} \log \left(1 + \left(r_{\sss B}\right)^{L_{\sss N}^{\sss (t)}(i)} \right)
	\stackrel{\mathbb{P}}{\longrightarrow} 0.
	\ee
\end{lemma}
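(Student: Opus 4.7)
The plan splits the lemma into its two assertions: I first establish the stochastic domination, and then deduce the convergence in probability from it.

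\textbf{Stochastic domination.} Conditionally on the line lengths $L_{\sss N}^{\sss (l)}(1),\ldots,L_{\sss N}^{\sss (l)}(K_{\sss N}^{\sss (l)})$, the number of degree-2 vertices not belonging to any line is the deterministic quantity $\bar N := n_2 - \sum_{i=1}^{K_{\sss N}^{\sss (l)}}(L_{\sss N}^{\sss (l)}(i)-2)$, which satisfies $\bar N \leq n_2 \leq N$. By the exchangeability of the uniform pairing of half-edges in the configuration model, the restriction of the graph to these $\bar N$ vertices is distributed as $\mathrm{CM}_{\sss \bar N}(\mathrm{\textbf{2}})$, so that $K_{\sss N}^{\sss (t)} \stackrel{d}{=} K^t_{\bar N}$ in this conditional law. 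To compare $K^t_{\bar N}$ with $K^t_{\sss N}$, I would re-index the decomposition \eqref{kappa_N}--\eqref{bernI} by $m = N - j + 1$, obtaining
$$K^t_{\sss N} \stackrel{d}{=} \sum_{m=1}^{N} J_m, \qquad J_m \sim \mathrm{Bern}\bigl(1/(2m-1)\bigr) \text{ independent}.$$
The key feature is that only the upper summation limit depends on the volume. Coupling with the same sequence $(J_m)_{m \geq 1}$ gives $K^t_{\bar N} = \sum_{m=1}^{\bar N} J_m \leq \sum_{m=1}^{N} J_m = K^t_{\sss N}$ almost surely, and hence $K_{\sss N}^{\sss (t)} \preceq K^t_{\sss N}$.

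\textbf{Convergence in probability.} Since $\beta \geq 0$, one has $r_{\sss B}\in[0,1)$ by \eqref{lambda_piu_meno}, and therefore $0\le \log(1+r_{\sss B}^{\ell})\le \log 2$ uniformly in $\ell\geq 1$. Consequently,
$$0 \;\leq\; \frac{1}{N} \sum_{i=1}^{K_{\sss N}^{\sss (t)}} \log\bigl(1 + (r_{\sss B})^{L_{\sss N}^{\sss (t)}(i)}\bigr) \;\leq\; \frac{\log 2}{N}\, K_{\sss N}^{\sss (t)}.$$
Combining the stochastic domination just established with $K^t_{\sss N}/N \stackrel{\mathbb{P}}{\to} 0$ from \eqref{kappaNsuN}, one obtains $K_{\sss N}^{\sss (t)}/N \stackrel{\mathbb{P}}{\to} 0$, and the claim follows.

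\textbf{Main obstacle.} The delicate step is the structural identification of the residual graph as $\mathrm{CM}_{\sss \bar N}(\mathrm{\textbf{2}})$ conditionally on the line lengths: it relies on the fact that the uniform pairing of half-edges is exchangeable, so that conditioning on which pairings form the lines (and of what lengths) leaves the pairings among the remaining degree-2 half-edges uniformly distributed. Once this reduction is in hand, the reindexing and coupling are elementary, and the uniform bound $\log(1 + r_{\sss B}^{\ell}) \leq \log 2$ makes the remaining convergence routine.
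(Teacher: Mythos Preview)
Your proof is correct and follows essentially the same approach as the paper: bound each summand by $\log 2$, reduce to $K_{\sss N}^{\sss (t)}/N\to 0$, and invoke the stochastic domination together with \eqref{kappaNsuN}. You are actually more careful than the paper, which merely asserts the domination; your explicit reindexing $m=N-j+1$ and coupling via a common sequence $(J_m)_{m\ge 1}$ make the monotonicity $K^t_{\bar N}\le K^t_{\sss N}$ transparent.
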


\begin{proof} Since $0<r_{\sss B}=\lambda_-/\lambda_+<1$,
	\be
	0\le \frac{1}{N} \sum_{i=1}^{K_{\sss N}^{\sss (t)}} 
	\log \left(1 + \left(r_{\sss B}\right)^{L_{\sss N}^{\sss (t)}(i)} \right) \, \leq \, \frac{K_{\sss N}^{\sss (t)} \log 2}{N} .
	\ee
Recalling that  $K_{\sss N}^{\sss (t)} \preceq K^t_{\sss N}$ and $\displaystyle \frac{ K^t_{\sss N}}{N}\stackrel{\mathbb{P}}{\longrightarrow} 0$ by (\ref{kappaNsuN}), we complete the proof of the lemma.
\end{proof}

By applying  Lemma \ref{p_l_1} and Lemma \ref{negli},  we finally identify the thermodynamic limit of the random quenched pressure as
	\be\label{pressQcaso2}
	\frac{1}{N} \log Z_{\sss N}(\beta, B) \, \stackrel{\mathbb{P}}{\longrightarrow} 
	\psi(\beta,B)\equiv 
	\log \lambda_+(\beta, B)  + \sum_{l \geq 2} p_l^* \log \left(A_+(\beta, B) + A_-(\beta, B) \left(r_{\sss B}\right)^{l}\right).
	\ee
Moreover, recalling that in the thermodynamic limit the averaged quenched pressure is equal to the random quenched pressure,  
\col{we also have} $\overline{\psi} (\beta, B)  \,=\, \psi (\beta, B).$

The quenched susceptibility is then obtained by calculating the second derivative of the quenched pressure, i.e.,
	\be
	\label{var12-random-quenched}
	\chi(\beta, B) \,  =  \sum_{l \geq 2} p_l^* \frac{\partial^2}{\partial B^2} 
	\log \left(A_+ (\beta, B) +  A_-(\beta, B) \left(r_{\sss B}\right)^l\right) \, 
	+  \frac{\partial^2}{\partial B^2} \log \lambda_+ (\beta, B) .
	\ee
This will allow us to compare the variances in the random and averaged quenched settings.

\subsubsection{Proof of averaged quenched CLT}
\col{Once more},  to prove the CLT \col{in the averaged quenched setting} we must show that
	\be
	\lim_{N \rightarrow \infty}\, \p \left[\exp \left(t \frac{S_{\sss N} - \p\left(S_{\sss N} \right)}{\sqrt{N}}\right) \right] 
	\, = \, \exp\left(\frac{t^2}{2}\sigma_{\mathrm{aq}}^2\right) \quad \quad \quad \mbox{for all}\; t \in \mathbb{R},
	\ee
for some $\sigma_{\mathrm{aq}}^2$. We start by rewriting the moment generating function as a product of two terms that will be analyzed separately, i.e.,
	\be
	\label{two_terms}
	\p \left[\exp \left(t \frac{S_{\sss N} - \p\left(S_{\sss N} \right)}{\sqrt{N}}\right) \right] 
	= \q \left[\frac{Z_{\sss N}\left(\beta, B + \frac{t}{\sqrt{N}}\right)}{Z_{\sss N}(\beta, B)}\right] 
	\, \, \exp\left(-\frac{t}{\sqrt{N}} \p\left(S_{\sss N} \right)\right)\;.
	\ee
From (\ref{tori_linee}), we separate the contribution of lines and tori to the partition function, as
	\be
	\label{e_alla_FeE}
	Z_{\sss N}(\beta, B) = \exp{\left[N F_{\beta, B}\left(p^{\sss(N)}\right) + N E_{\sss N}(\beta, B)\right]},
	\ee
where
	\eqan{
	\label{f-def}
	F_{\beta, B}\left(p^{\sss(N)}\right) 
	&= \log \lambda_+(\beta, B)  
	+ \sum_{l \geq 2} p_l^{\sss{(N)}} \log \left(A_+(\beta, B) + A_- (\beta, B)\left(r_{\sss B}\right)^{l}\right),\\
	\label{e-def}
	E_{\sss N}(\beta, B) &= \frac{1}{N} \sum_{i=1}^{K_{\sss N}^{\sss (t)}} 
	\log \left(1 + \left(r_{\sss B}\right)^{L_{\sss N}^{\sss (t)}(i)} \right),
	}
and $p^{\sss(N)} = \left(p_l^{\sss{(N)}}\right)_{l\geq 2}$ is defined in \eqref{p_elle}. The first factor in \eqref{two_terms} is
	\begin{align}
	\label{qzz}\nonumber
	\q \left[\frac{Z_{\sss N}\left(\beta, B + \frac{t}{\sqrt{N}}\right)}{Z_{\sss N}(\beta, B)}\right]
	&=\q \left[\e^{N\left(F_{\beta, B+\frac{t}{\sqrt{N}}} (p^{\sss(N)}) - F_{\beta, B} (p^{\sss(N)})\right) 
	+ N\left(E_{\sss N} \left(\beta, B+\frac{t}{\sqrt{N}}\right) - E_{\sss N}\left(\beta, B\right)\right) }\right] \\ \nonumber
	&=\q \left[\e^{N\left(F_{\beta, B+\frac{t}{\sqrt{N}}} (p^{\sss(N)}) - F_{\beta, B} (p^{\sss(N)})\right)} 
	\q\left(\left. \e^{N\left(E_{\sss N} \left(\beta, B+\frac{t}{\sqrt{N}}\right) -  E_{\sss N}\left(\beta, B\right)\right)} 
	\right| p^{\sss(N)} \right)\right] \\ 
	&=\q \left[\e^{N\left(F_{\beta, B+\frac{t}{\sqrt{N}}} (p^{\sss(N)}) - F_{\beta, B} (p^{\sss(N)})\right)}\right](1+o(1)),
	\end{align}
because the contribution of $\e^{N\left( E_{\sss N} \left(\beta,B+\frac{t}{\sqrt{N}}\right) - E_{\sss N} \left(\beta, B\right)\right)}$ as $N\to \infty$ is negligible.  Indeed, by Taylor expansion,
	\eqan{
	N\left(E_{\sss N} \left(\beta, B+\frac{t}{\sqrt{N}}\right) - E_{\sss N}(\beta, B)\right)
	&=\frac{t}{\sqrt{N}}\sum_{i=1}^{K_{\sss N}^{\sss (t)}} 
	\frac{L_{\sss N}^{\sss (t)}(i)  \left(r_{\sss B}\right)^{L_{\sss N}^{\sss (t)}(i)-1}\left. \frac{d}{dt}\left( r_{\sss B+\frac{t}{\sqrt{N}}}\right)\right|_{t=0}}{1 + \left(r_{\sss B}\right)^{L_{\sss N}^{\sss (t)}(i)}} 
	+ o\left (\frac{t}{\sqrt{N}} \right)\nonumber\\
	&\leq \frac{t C K_{\sss N}^{\sss (t)}}{\sqrt{N}},
	}
since the summands in the previous equation are uniformly bounded in $N$. Therefore, by \eqref{dueallaK},
	\be
	\q\Big(\e^{N\left(E_{\sss N} \left(\beta, B+\frac{t}{\sqrt{N}}\right) -  E_{\sss N}\left(\beta, B\right)\right)} 
	\Big| p^{\sss(N)} \Big)\leq  \q\left(\left. \e^{\frac{t C K_{\sss N}^{\sss (t)}}{\sqrt{N}}} 
	\right| p^{\sss(N)} \right)\stackrel{N \to \infty}{\longrightarrow} 1,
	\ee
and  then \eqref{qzz} follows.

The next step is to estimate the expectation appearing in  \eqref{qzz}. To this aim, we introduce the notation 
	\be
	\Delta F_{ B+\frac{t}{\sqrt{N}}} (p^{\sss(N)}):=F_{\beta, B+\frac{t}{\sqrt{N}}} (p^{\sss(N)}) - F_{\beta, B} (p^{\sss(N)}),
	\qquad
	f_l (B) \,  := \, \log\left(A_+\left(B\right) + A_-\left(B\right) \left(r_{\sss B}\right)^l\right),
	\ee
where, here and in the rest of the section, the dependence on $\beta$ is dropped for the sake of notation. 
Adding and subtracting  $\sum_{l\geq 2} p_l^* f_l \left(B+\frac{t}{\sqrt{N}}\right)$ to $\Delta F_{B+\frac{t}{\sqrt{N}}} (p^{\sss(N)})$, yields
	\begin{align}
	\Delta F_{B+\frac{t}{\sqrt{N}}} (p^{\sss(N)}) 
	\,= \, &\log \lambda_+\left(B+\frac{t}{\sqrt{N}}\right)-\log \lambda_+(B)
	+\sum_{l\geq 2} \left(p_l^{\sss{(N)}} - p_l^*\right)f_l \left(B+\frac{t}{\sqrt{N}}\right)\nonumber \\
	&+ \sum_{l\geq 2} p_l^* f_l \left(B+\frac{t}{\sqrt{N}}\right) - \sum_{l\geq 2} p_l^{\sss{(N)}} f_l(B). 
	\end{align}
Expanding around $t=0$, we obtain
	\begin{align}
	\Delta  F_{ B+t} (p^{\sss(N)}) 
	= \, &t \frac{\partial}{\partial t} 
	\left(\log \lambda_+\left(B+t\right)\right)\big|_{t=0} 
	\,+\, \frac{t^2}{2}
	\frac{\partial^2}{\partial t^2} 
	\left(\log \lambda_+\left(B+t\right)\right)\big|_{t=0} \\ \nonumber
	&+ t \sum_{l\geq 2} \left(p_l^{\sss{(N)}} - p_l^*\right)
	\frac{\partial}{\partial t} \left(f_l \left(B+t\right)\right)\big|_{t=0}  \\ \nonumber
	&+  \frac{t^2}{2}  \sum_{l\geq 2} \left(p_l^{\sss{(N)}} - p_l^*\right) \left.
	\frac{\partial^2}{\partial t^2} \left(f_l \left(B+t\right)\right)\right |_{t=0} \nonumber \\
	&+ t \sum_{l\geq 2} p_l^*\left.\frac{\partial}{\partial t} \left(f_l \left(B+t\right)\right)\right|_{t=0} 
	+  \frac{t^2}{2}  \sum_{l\geq 2} p_l^*\left.\frac{\partial^2}{\partial t^2} 
	\left(f_l \left(B+t\right)\right)\right|_{t=0} + o(t^2). \nonumber
	\end{align}
Using the above for $t$ replaced with $t/\sqrt{N}$, recalling the expression for the random quenched susceptibility given in \eqref{var12-random-quenched},  taking the exponential of $N\Delta  F_{ B+\frac{t}{\sqrt{N}}} (p^{\sss(N)})$ and the average with respect to $\q$, we get
	\begin{align}\label{prim}
 	&\q \Big[\e^{N(F_{\beta, B+\frac{t}{\sqrt{N}}} (p^{\sss(N)}) - F_{\beta, B} (p^{\sss(N)}))}\Big]\\
 	&\quad=\e^{\frac{t^2}{2}\chi}  \cdot  \q \Bigg[\exp{\Big[t \sqrt{N} 
	\frac{\partial}{\partial t}\log \lambda_+(B+t)\big|_{t=0}
	+t \sqrt{N} \sum_l (p_l^{\sss{(N)}} - p_l^*) 
	\frac{\partial}{\partial t} (f_l (B+t)\big|_{t=0}\Big]}\nn\\
	&\quad\qquad \times\exp{\Big[\frac{t^2}{2}  \sum_l (p_l^{\sss{(N)}} - p_l^*)
	\frac{\partial^2}{\partial t^2} \left(f_l \left(B+t\right)\right)\big|_{t=0} \, 
	+ \, t \sqrt{N} \sum_l p_l^*\frac{\partial}{\partial t} (f_l(B+t))\big|_{t=0}
	\,+ \, o(1)\Big]}\Bigg].\nn
	\end{align}
Next we consider the factor $ \exp\left(-\frac{t}{\sqrt{N}} \p\left(S_{\sss N} \right)\right)$ appearing in \eqref{two_terms}. Since 
	\begin{align}
	\nonumber
 	\p (S_{\sss N}) \, = \, \q \Big[\frac{\partial}{\partial t} \frac{Z_{\sss N} (B+t)}{Z_{\sss N}(B)} \big|_{t=0} \Big],
	\end{align}
we can write 
	\begin{equation}
	\nonumber
	\p(S_{\sss N}) \, = \, N \frac{\partial}{\partial t} \left( \log \lambda_+ \left(B+t\right)\right)\big|_{t=0} 
	\,+\q \Big[\, N \sum_{l \geq 2} p_l^{\sss{(N)}} \frac{\partial}{\partial t} \left(f_l(B+t)\right) \big|_{t=0}  
	+N\frac{\partial  }{\partial t}E_{\sss N}(B+t)\big|_{t=0}\Big].
	\end{equation}
As we \col{have} already done  for $\q \Big[\frac{Z_{\sss N}\left(\beta, B + \frac{t}{\sqrt{N}}\right)}{Z_{\sss N}(\beta, B)}\Big]$, we can show, by bounding the derivative of $E_{\sss N}(\beta , B)$, that the term corresponding to tori gives a vanishing contribution to the limit of $\frac{t}{\sqrt{N}} \p\left(S_{\sss N} \right)$. Thus, we can write
	\begin{equation}\label{sec}
	\e^{- \frac{t}{\sqrt{N}} \p(S_{\sss N})} 
	\, =\, \e^{-t \sqrt{N} \frac{\partial}{\partial t} \left( \log \lambda_+\left(B+t\right)\right)\big|_{t=0} 
	\,-\, t \sqrt{N} \q \Big[\sum_{l \geq 2} p_l^{\sss{(N)}}\frac{\partial}{\partial t} \left(f_l(B+t)\right) \big|_{t=0}\Big]}\Big]
	(1+o(1)).
	\end{equation}
Combining (\ref{prim}) and (\ref{sec}), the moment generating function can be rewritten as
	\be
	\label{formula-in-terms-of-MGF}
	\p \Big[\exp \Big(t \frac{S_{\sss N} - \p\left(S_{\sss N} \right)}{\sqrt{N}}\Big) \Big] 
	= \, \e^{t^2\chi/2} \, \cdot \, 
	\q \Big[\e^{t \sqrt{N} \sum_{l \geq 2} \left(p_l^{\sss{(N)}} - \q(p_l^{\sss{(N)}})\right) 
	\frac{\partial}{\partial t} \left(f_l \left(B+t\right)\right)\big|_{t=0}}\Big](1+o(1)).
	\ee
As the previous equation shows, the cumulant generating function of $ \frac{S_{\sss N} - \p\left(S_{\sss N} \right)}{\sqrt{N}}$  is  expressed  in terms of that of   
	\be
	X_{\sss N}:=\sqrt{N} \sum_{l=2}^{N} \left(p_l^{\sss{(N)}} - \q(p_l^{\sss{(N)}})\right) \frac{\partial}{\partial t} \left(f_l \left(B+t\right)\right)\big|_{t=0},
	\label{defgamma}
	\ee
Since $f_{l}(B)=\log(A_+(B)+A_-(B)(r_{\sss B})^l)=\log(A_+(B))+\log(1+a_{\sss B} (r_{\sss B})^l)$ with $a_{\sss B}=A_-(B)/A_+(B)$, and 
$\log(A_+(B))$ is independent of $l$, so that 
	\eqn{
	\sum_{l=2}^{N} \left(p_l^{\sss{(N)}} - \q(p_l^{\sss{(N)}})\right)\log(A_+(B))\equiv 0,
	}
we can take
	\eqn{
	\label{gamma-l-def}
	\gamma_l(B)\, := \, \frac{\partial}{\partial t}\log(1+a_{\sss B+t} (r_{\sss B+t})^l)\big|_{t=0}.
	}
In particular, $\gamma_l(B)$ is exponentially small for large $l$.\\Thus, in order to complete the proof of the theorem we need to show that the sequence $\left(X_{\sss N}\right)_{\sss N \geq 1}$ converges to a normal limit. This result is provided by the next lemma:

\begin{lemma}[CLT for linear combinations of $(p_l^{\sss{(N)}})_{l\geq 2}$]
\label{averaged_que_CLT}
As $N\to \infty$,
	\be
	\label{XN-rv-def}
	X_{\sss N}\equiv \sqrt{N}\sum_{l=2}^N \left(p_l^{\sss{(N)}} - \q \left(p_l^{\sss{(N)}}\right)\right) \gamma_l(B)
	\, \stackrel{{\cal D}}{\longrightarrow} \, \mathcal{N}\left(0,\sigma_{\sss G}^2\right),
	\ee
with
	\begin{align}
	\label{sigmag2}
	\sigma_{\sss G}^2 \,:= \; &\frac{1-p}{2}\sum_{\ell=2}^{\infty} \gamma_{\ell}^2(B)\left[-\frac{\alpha^{2l-4}}{(1+\alpha)^{2l-2}}
  \left( 1 + \frac{(l-2-\alpha)^2}{\alpha(1+\alpha)} \right) 
  + \frac{1}{1+\alpha} \left(\frac \alpha {1+\alpha}\right)^{l-2}\right]
\nonumber \\
	&+
	\frac{1-p}{2}\sum_{\ell,j=2}^{\infty} \ \gamma_{\ell}(B)\gamma_{j}(B)\left[-\frac{\alpha^{\ell+j-4}}{(1+\alpha)^{\ell+j-2}}
  \left( 1 + \frac{(\ell-2-\alpha)(j-2-\alpha)}{\alpha(1+\alpha)} \right)\right]
,
	\end{align}
where $\alpha=2p/(1-p)$ and $p$ is defined in \eqref{defn1n2}. Also the moment generating function of $X_{\sss N}$ converges to that of $\mathcal{N}\left(0,\sigma_{\sss G}^2\right)$.
\end{lemma}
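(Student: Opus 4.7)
The plan is to recognize $X_{\sss N}$ as a linear functional of the vector of line-length counts $(Np_l^{\sss{(N)}})_{l\geq 2}$ in $\CMNonetwo$ and to apply the joint Gaussian limit for these counts established in \cite{dPB}. Precisely, for any fixed truncation level $L \geq 2$, that result (combined with Lemma \ref{p_l_1}) gives
\[
\sqrt{N}\bigl(p_2^{\sss{(N)}} - \q(p_2^{\sss{(N)}}),\ldots,p_L^{\sss{(N)}} - \q(p_L^{\sss{(N)}})\bigr)\convd \mathcal{N}(0,\Sigma^{\sss{(L)}}),
\]
for an explicit covariance matrix $\Sigma^{\sss{(L)}}$. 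By the continuous mapping theorem, the truncated statistic $X_{\sss N}^{\sss{(L)}} := \sqrt{N}\sum_{l=2}^L (p_l^{\sss{(N)}} - \q(p_l^{\sss{(N)}}))\gamma_l(B)$ then converges in distribution to $\mathcal{N}(0,\sigma_L^2)$ with $\sigma_L^2 = \sum_{l,j=2}^L \gamma_l(B)\gamma_j(B)\Sigma^{\sss{(L)}}_{l,j}$.

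The second step is to identify the entries of $\Sigma^{\sss{(L)}}$ and to show that $\sigma_L^2 \to \sigma_{\sss G}^2$ as $L \to \infty$. Because each line consumes exactly two degree-1 endpoints and $n_1 = N - \lfloor pN\rfloor$ is deterministic, the counts $(Np_l^{\sss{(N)}})_{l\geq 2}$ obey the hard constraint $\sum_l Np_l^{\sss{(N)}} = n_1/2$, which forces negative off-diagonal covariances and a reduced diagonal variance relative to the naive multinomial. A direct Palm-type enumeration over ordered pairs of lines with prescribed lengths, analogous to \eqref{avLl} and \eqref{cond_pr_lungh_linee}, computes $\lim_{N\to\infty} N\,\Cov_{\q}(p_l^{\sss{(N)}},p_j^{\sss{(N)}})$ in closed form, and summation against $\gamma_l(B)\gamma_j(B)$ reproduces exactly the two series displayed in \eqref{sigmag2}.

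The error $X_{\sss N} - X_{\sss N}^{\sss{(L)}}$ is dispatched by a tail bound. Since $0<r_{\sss B}<1$, the coefficients $\gamma_l(B)$ defined in \eqref{gamma-l-def} decay geometrically in $l$, while $N\,\Var_{\q}(p_l^{\sss{(N)}}) \leq N\,\q(p_l^{\sss{(N)}}) \leq C\bigl(2p/(p+1)\bigr)^{l-2}$ by \eqref{avplL}--\eqref{avLl}. Hence $\Var_{\q}(X_{\sss N} - X_{\sss N}^{\sss{(L)}})$ is bounded by a geometric series in $L$ uniformly in $N$, so Slutsky's theorem yields $X_{\sss N} \convd \mathcal{N}(0,\sigma_{\sss G}^2)$.

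The main obstacle is upgrading distributional convergence to convergence of the moment generating function $\q[e^{tX_{\sss N}}]\to e^{t^2\sigma_{\sss G}^2/2}$ required by \eqref{formula-in-terms-of-MGF}, since the random variable $X_{\sss N}$ a priori carries a factor $\sqrt{N}$ that precludes crude uniform bounds. My plan is to exploit the fact that the joint CLT of \cite{dPB} is established by the method of moments, so that all joint moments of any finite sub-collection $\{\sqrt{N}(p_l^{\sss{(N)}}-\q(p_l^{\sss{(N)}}))\}_{l\leq L}$ converge to the corresponding Gaussian moments. Applying this to each $\q[(X_{\sss N}^{\sss{(L)}})^p]$ and then letting $L\to\infty$, using the geometric decay of $\gamma_l$ and the uniform estimate $\q(p_l^{\sss{(N)}})\leq C(2p/(p+1))^{l-2}/N$ to control tails of the $p$-fold sums, yields $\q[X_{\sss N}^p]\to\expec[Z^p]$ for every $p\geq 1$ with $Z\sim\mathcal{N}(0,\sigma_{\sss G}^2)$. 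Since the Gaussian is determined by its moments and $\sum_p |t|^p\expec[|Z|^p]/p!<\infty$, dominated convergence then gives convergence of the MGFs in a neighborhood of the origin, completing the proof.
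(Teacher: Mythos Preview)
Your distributional-convergence argument (truncate, apply the finite-dimensional CLT from \cite{dPB}, control the tail in $L^2$, Slutsky) is essentially the approach the paper sketches in its commented-out ``old'' version, and it works for the convergence in law. However, the step upgrading this to MGF convergence has a real gap. You argue: moments of $X_{\sss N}$ converge to Gaussian moments, the Gaussian is moment-determined, $\sum_p |t|^p\expec[|Z|^p]/p!<\infty$, hence dominated convergence gives $\q[\e^{tX_{\sss N}}]\to \e^{t^2\sigma_{\sss G}^2/2}$. But dominated convergence for the power series $\sum_p t^p\q[X_{\sss N}^p]/p!$ requires a bound $\sup_N|\q[X_{\sss N}^p]|\le b_p$ with $\sum_p |t|^p b_p/p!<\infty$, i.e.\ a \emph{uniform-in-$N$} sub-Gaussian moment bound. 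Pointwise convergence $\q[X_{\sss N}^p]\to\expec[Z^p]$ only gives $\sup_N|\q[X_{\sss N}^p]|<\infty$ for each fixed $p$, with no control on the growth in $p$. Your tail estimates on $\gamma_l$ and $\q(p_l^{\sss{(N)}})$ do not by themselves produce such uniform moment bounds; this is precisely the hard part.

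The paper circumvents this by working at the MGF level throughout. First, it notes that Theorem~\ref{cltPB} already gives convergence of the \emph{moment generating function} for the finite-dimensional vector $\overline{C^*}^{\sss (N)}$ (uniformly near $\bar 0$), and Lemma~\ref{cltLambda} transfers this from components $C_\tau$ to lines $\Lambda_\ell$ (a step you do not address). This yields \eqref{aim-CLT-MGF-1} for the truncated part $X_{\sss N}^{\sss (1)}(k)$ directly. Second, for the tail $X_{\sss N}^{\sss (2)}(k)$ the paper proves the sub-Gaussian MGF bound $M_{n_1,n_2}^{\sss(\ell)}(\theta)\le \e^{An_1\theta^2}$ in \eqref{Mn1n2-def} by an induction on $n_1$ via the recursion \eqref{Mn1n2-rec}, and combines this with a Jensen trick to reduce the multi-$\ell$ tail to single-$\ell$ MGFs. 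Finally, H\"older's inequality with exponents $p=1+\vep$, $q=(1+\vep)/\vep$ sandwiches $\q(\e^{X_{\sss N}})$ between expressions in $\q(\e^{bX_{\sss N}^{\sss (1)}(k)})$ and $\q(\e^{bX_{\sss N}^{\sss (2)}(k)})$; letting $N\to\infty$, then $k\to\infty$, then $\vep\searrow 0$ closes the argument. The inductive MGF bound is the key idea your proposal is missing.
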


\noindent
Lemma \ref{averaged_que_CLT} implies that \col{for any $t\in\mathbb{R}$}
	\be
	\lim_{N \rightarrow \infty}\, \q \Big[\e^{t \sqrt{N} \sum_l \left(p_l^{\sss{(N)}} - 
	\q \left(p_l^{\sss{(N)}}\right)\right)\gamma_l(B)}\Big] 
	= \e^{t^2\sigma_{\sss G}^2/2}.
	\ee
This completes the proof of the averaged quenched CLT for $\CMNonetwo$, since
	\be
	\lim_{N \rightarrow \infty}\, \p \Big[\exp \left(t \frac{S_{\sss N} - 
	\p\left(S_{\sss N} \right)}{\sqrt{N}}\right) \Big] \, 
	= \, \exp\left(t^2\sigma_{\mathrm{aq}}^2/2\right),
	\ee
with $\sigma_{\mathrm{aq}}^2 = \chi  +\sigma_{\sss G}^2$.

The remaining part of this section is devoted to the proof of  Lemma \ref{averaged_que_CLT}. This proof is based on the following theorem by de Panafieu and Broutin \cite{dPB} that states a joint CLT for the number of connected components in graphs with degrees 1 and 2:

\begin{theorem}[CLT for connected components  in  random graphs \protect{\cite[Theorem 1]{dPB}}]
\label{cltPB}
Let the random vector $C_{\tau}$ denote the number of connected components of size $\tau$ in a random graph with $n_{1}$ vertices of degree 1 and $n_{2}$ vertices of degree 2 and no other degrees allowed. For some $T\in \mathbb{N}$ and all $2\le \tau \le T$,  let
	\be\label{Cstar}
	C_{\tau}^{*}=\frac{1}{\sqrt{n_{1}/2}}\left (C_{\tau} - \frac{\alpha^{\tau -2}}{(1+\alpha)^{\tau-1}}\frac{n_{1}}{2}\right )
	\ee
where $\alpha=2n_{2}/n_{1}$ is a fixed real positive value, and  $\overline{C^*}^{\sss (N)}=\left (C^{*}_{2},\ldots, C^{*}_{\sss T}\right)$. Then, as $N=n_{1}+n_{2}\rightarrow \infty$, and with $\phi_{\overline{C^*}^{\sss (N)}} (s_{2},\ldots, s_{\sss T})$ denoting the joint moment generating function of $\overline{C^*}^{\sss (N)}$,
	\be
	\lim_{n_{1}\to \infty} \phi_{\overline{C^*}^{\sss (N)}} (s_{2},\ldots, s_{\sss T})=\e^{\bar{s}H\bar{s}^T/2},
	\ee
where $\bar{s}=(s_{2},\ldots, s_{\sss T})$, $\bar{s}^T$ is its transposed, and $H=H(\alpha)$ is the $(T-1)\times (T-1)$-sized covariance matrix given by
	\be
	\label{covmdPB}
	H_{r,t}(\alpha)= -\frac{\alpha^{r+t-4}}{(1+\alpha)^{r+t-2}}
  \left( 1 + \frac{(r-2-\alpha)(t-2-\alpha)}{\alpha(1+\alpha)} \right) 
  + \frac{\indic{r=t}}{1+\alpha} \left(\frac \alpha {1+\alpha}\right)^{r-2},
	\ee
and the convergence is uniform for $\bar{s}$ in a neighborhood of $\bar{0}=(0,\ldots,0)$. As a result, $\overline{C^*}^{\sss (N)}\convd\mathcal{N}(\bar{0},H(\alpha))$, the multivariate normal distribution with covariance matrix $H(\alpha)$, $\alpha=2p/(1-p)$. 
\end{theorem}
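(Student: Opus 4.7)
The statement is a result of de Panafieu and Broutin \cite{dPB}, proved there by analytic combinatorics; I outline the strategy. The plan is to encode configurations of $\CMNonetwo$-type (half-edge pairings with degrees in $\{1,2\}$) together with a marking of their components by a bivariate generating function carrying auxiliary marker variables, and then to extract the bivariate coefficient asymptotics through a uniform saddle-point expansion around the base point $\bar s=\bar 0$. Every component of such a configuration is a path of length $\ge 2$ (both endpoints of degree $1$) or a cycle of length $\ge 1$ (all vertices of degree $2$). Writing $P_{\tau}(x,y)$ and $K_{\tau}(y)$ for the exponential generating function (weighted by half-edge pairings) of a single such path/cycle, with $x$ marking degree-$1$ vertices and $y$ marking degree-$2$ vertices, the set-construction principle yields
\[
F(x,y;u_2,\ldots,u_T)\;=\;\exp\Big(\textstyle\sum_{\tau\ge 2} w_\tau P_\tau(x,y) \,+\, \sum_{\tau\ge 1} w_\tau K_\tau(y)\Big),
\]
with $w_\tau=u_\tau$ for $2\le \tau\le T$ and $w_\tau=1$ otherwise. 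The joint MGF of $(C_2,\ldots,C_T)$ is then the coefficient ratio $[x^{n_1}y^{n_2}]F(x,y;\bar u)\big/[x^{n_1}y^{n_2}]F(x,y;\bar 1)$.

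The second step evaluates both coefficients by a two-dimensional saddle-point method, uniformly for $\bar u$ in a polydisc around $\bar 1$. The saddle $(x_0(\bar u),y_0(\bar u))$ is defined implicitly by $x_0\partial_x\log F = n_1$ and $y_0\partial_y\log F=n_2$, and at $\bar u=\bar 1$ is given by explicit rational functions of $\alpha=2n_2/n_1$. Setting $u_\tau = \exp(s_\tau/\sqrt{n_1/2})$ and expanding to quadratic order in $\bar s$, the linear term in the log-coefficient cancels exactly against the deterministic centering $\alpha^{\tau-2}(1+\alpha)^{-(\tau-1)}n_1/2$ that is built into $C_\tau^*$ in \eqref{Cstar}, and the quadratic term yields $\phi_{\overline{C^*}^{\sss (N)}}(\bar s) = \exp(\tfrac12 \bar s H \bar s^{\,T}) + o(1)$ uniformly on a complex neighborhood of $\bar 0$.

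The third step identifies the entries of $H$. The ``Poisson'' diagonal $\indic{r=t}(1+\alpha)^{-1}(\alpha/(1+\alpha))^{r-2}$ is precisely what one obtains when the saddle is held frozen at $(x_0(\bar 1),y_0(\bar 1))$: in that situation the atoms of each type $r$ are asymptotically independent Poisson variables whose variances equal their means, and these means match, after renormalization by $n_1/2$, the constants $(2+\alpha)p_r^*$ coming from Lemma~\ref{p_l_1}. The off-diagonal term $-(1+\alpha)^{-(r+t-2)}\alpha^{r+t-4}\bigl(1+(r-2-\alpha)(t-2-\alpha)/[\alpha(1+\alpha)]\bigr)$ is the correction coming from the shift of $(x_0,y_0)$ as $\bar u$ varies: increasing the weight on size-$r$ components consumes additional degree-$1$ and degree-$2$ vertices in the ratio $2\!:\!(r-2)$, and maintaining the fixed budgets $n_1,n_2$ translates, via the implicit function theorem applied to the saddle-point equations, into the sensitivity factor $(r-2-\alpha)$; multiplying this by the analogous factor for size $t$ produces the stated cross term, while the constant $1$ inside the parenthesis reflects the direct mixed second derivative of $\log F$.

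The main technical obstacle is to establish the saddle-point estimate with enough uniformity in $\bar s$ to upgrade from convergence in distribution to convergence of the moment generating function on a complex neighborhood of $\bar 0$. This requires, beyond the standard Hayman/Quasi-Powers toolkit, (i) a tail bound showing that components of size exceeding $T$ contribute negligibly to the exponent and (ii) control of the cycle contribution to each $C_\tau$. For (ii) one argues as in Lemma~\ref{negli} together with \eqref{lim_av_k}--\eqref{kappaNsuN}: cycles of $\CMNonetwo$ live on the residual set of degree-$2$ vertices not absorbed into any line, and their total number is $O(\log N)=o(\sqrt{n_1/2})$, so their contribution vanishes in the given normalization.
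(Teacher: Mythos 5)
This result is stated in the paper with a direct citation to de Panafieu and Broutin \cite[Theorem~1]{dPB}; the paper gives no internal proof of it and immediately moves on to adapting the theorem (Lemma~\ref{cltLambda}). There is therefore no paper-internal argument to compare yours against. That said, your reconstruction is the expected analytic-combinatorics route for such a result: decompose configurations into lines (paths with $\geq 2$ vertices) and cycles, encode them via a marked exponential generating function, extract the joint MGF of component counts as a ratio of bivariate coefficients, and apply a uniform two-variable saddle-point expansion (quasi-powers) around $\bar s = \bar 0$. Your heuristic reading of the two pieces of $H(\alpha)$ --- the diagonal Poisson-type term from a frozen saddle, and the rank-one-like negative correction from the saddle drift under the budget constraints on $n_1, n_2$ --- is consistent with the algebraic form of \eqref{covmdPB}, and the sensitivity factor $(r-2-\alpha)$ matching the excess degree-2 usage of a size-$r$ line relative to the global ratio $\alpha$ is the right mechanism. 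The only caution is that the steps you label as the ``main technical obstacle'' (uniformity of the saddle-point estimate for complex $\bar s$, and truncation of large components) are precisely what make the cited proof non-trivial, and one cannot certify them from this paper alone; the present paper simply takes \cite{dPB} as given and only needs to pass from all components $C_\tau$ to lines $\Lambda_\ell$, which it does separately in Lemma~\ref{cltLambda} by showing the torus contribution is $o(\sqrt{n_1})$ via the estimates \eqref{lim_av_k}--\eqref{kappaNsuN} you already invoke.
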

Theorem \ref{cltPB}  cannot be applied directly to our case since it deals with the number of {\em all} the connected components of the random graphs, while we are only interested  in the  {\em lines} of  $\CMNonetwo$. 
Therefore, some work is needed in order to adapt Theorem \ref{cltPB} to our setting. To deduce this result, let us introduce some notation. Denote by $\Lambda_{\ell} $ the number of lines of length $\ell$ in $\CMNonetwo$ (omitting the dependence
of $\Lambda_{\ell} $ on $N$ to lighten the notation) and by $\Lambda_{\ell}^{\ast}$ the centered variable normalized by $\sqrt{n_{1}/2}$ as in \eqref{Cstar}.  Then, we have the following Lemma.

\begin{lemma}[CLT for number of lines of fixed lengths in $\CMNonetwo$]
\label{cltLambda}
Let $\Lambda_{\ell}$ be the number of lines of length $\ell$ in a  $\CMNonetwo$ graph with $n_{1}$ vertices of degree one and $n_{2}$ of degree two, as in \eqref{defn1n2}.  For some $T\in \mathbb{N}$ and all $2\le \ell \le T$, let
	\be
	\label{Lambdastar}
	\Lambda_{\ell}^{\ast}=\frac{1}{\sqrt{n_{1}/2}}\left ( \Lambda_{\ell} - \q(\Lambda_{\ell})\right).
	\ee
Then, as $N\to \infty$,  with $\alpha=2n_{2}/n_{1}$  fixed,  the moment generating function of  the vector $\overline{\Lambda^{\ast}}^{\sss (N)}\equiv (\Lambda_{2}^{\ast},\ldots, \Lambda_{\sss T}^{\ast})$ satisfies the following limit
	\eqn{
	\lim_{N\rightarrow \infty}\q\big(\e^{\overline{s}\cdot \overline{\Lambda^{\ast}}^{\sss (N)}}\big )=\e^{\bar{s}H\bar{s}^T/2},
	}
	where $\bar{s}=(s_{2},\ldots, s_{\sss T})$, $\bar{s}^T$ is its transposed, and $H=H(\alpha)$ is the $(T-1)\times (T-1)$-sized covariance matrix given in \eqref{covmdPB}.
As a result, $\overline{\Lambda^{\ast}}^{\sss (N)} \convd \mathcal{N}(\bar{0},H(\alpha))$. 

\end{lemma}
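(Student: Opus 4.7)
The plan is to deduce Lemma \ref{cltLambda} from Theorem \ref{cltPB} by decomposing every connected component of $\CMNonetwo$ into either a line or a torus, and showing that the torus counts are asymptotically negligible at the $\sqrt{n_{1}/2}$ scale. Writing $C_{\tau}=\Lambda_{\tau}+T_{\tau}$, where $T_{\tau}$ denotes the number of tori of length $\tau$, one immediately obtains
\begin{equation*}
\Lambda_{\tau}^{\ast}=\tilde{C}_{\tau}^{\ast}-T_{\tau}^{\ast},\qquad
\tilde{C}_{\tau}^{\ast}:=\frac{C_{\tau}-\q(C_{\tau})}{\sqrt{n_{1}/2}},\qquad
T_{\tau}^{\ast}:=\frac{T_{\tau}-\q(T_{\tau})}{\sqrt{n_{1}/2}}.
\end{equation*}

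The first step is to argue that $\overline{\tilde{C}^{\ast}}^{\sss (N)}$ inherits the CLT of Theorem \ref{cltPB}. The uniform MGF convergence in that theorem implies convergence of all moments, so $\q(C_{\tau}) -\frac{\alpha^{\tau-2}}{(1+\alpha)^{\tau-1}}\frac{n_{1}}{2}=o(\sqrt{n_{1}})$, whence $\overline{\tilde{C}^{\ast}}^{\sss (N)}$ differs from $\overline{C^{\ast}}^{\sss (N)}$ by a deterministic $o(1)$ vector. Consequently $\q(\e^{\bar s\cdot \overline{\tilde{C}^{\ast}}^{\sss (N)}})\to \e^{\bar s H\bar s^{T}/2}$ uniformly in a neighborhood of $\bar 0$.

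The second step is to control the tori. By Lemma \ref{negli}, $T_{\tau}\le K_{\sss N}^{\sss (t)}\preceq K_{\sss N}^{t}$, where $K_{\sss N}^{t}$ is the total number of cycles in $\CMNtwo$ analyzed in Section \ref{three}. From \eqref{lim_av_k} we have $\q(K_{\sss N}^{t})=O(\log N)=o(\sqrt{n_{1}})$, while the product formula used in \eqref{dueallaK} gives $\q(\e^{cK_{\sss N}^{t}/\sqrt{n_{1}}})=1+o(1)$ for any fixed $c$. Hence $\q(\e^{\bar s\cdot \overline{T^{\ast}}^{\sss (N)}})\to 1$ uniformly near $\bar 0$.

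The final step combines these two asymptotics via Hölder's inequality, which is needed because $\overline{\tilde{C}^{\ast}}^{\sss (N)}$ and $\overline{T^{\ast}}^{\sss (N)}$ are not independent: for $p>1$ and $q=p/(p-1)$,
\begin{equation*}
\q\bigl(\e^{\bar s\cdot \overline{\Lambda^{\ast}}^{\sss (N)}}\bigr)
\le \q\bigl(\e^{p\bar s\cdot \overline{\tilde{C}^{\ast}}^{\sss (N)}}\bigr)^{1/p}
\q\bigl(\e^{-q\bar s\cdot \overline{T^{\ast}}^{\sss (N)}}\bigr)^{1/q}
\longrightarrow \e^{p\bar s H \bar s^{T}/2},
\end{equation*}
and letting $p\downarrow 1$ gives $\limsup \q(\e^{\bar s\cdot \overline{\Lambda^{\ast}}^{\sss (N)}})\le \e^{\bar s H\bar s^{T}/2}$. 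Applying the same argument with $\bar s$ replaced by $-\bar s$ (or, equivalently, expanding $\e^{-\bar s\cdot \overline{T^{\ast}}^{\sss (N)}}=1+R_{N}$ with $\q(R_{N}^{2})\to 0$ and invoking Cauchy--Schwarz) yields the matching lower bound. The main obstacle is precisely this Hölder step: one must verify that the estimate $\q(\e^{cK_{\sss N}^{t}/\sqrt{n_{1}}})\to 1$ survives inflation by the Hölder exponent $q$, which is where the explicit exponential bound from \eqref{dueallaK} combined with the stochastic domination in Lemma \ref{negli} becomes essential.
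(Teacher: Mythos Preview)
Your proposal is essentially the paper's own argument: decompose $C_\tau=\Lambda_\tau+\Theta_\tau$, show the torus contribution to the MGF tends to~1 via the stochastic domination $K_{\sss N}^{\sss (t)}\preceq K_{\sss N}^t$ and the Bernoulli product computation, and sandwich $\q(\e^{\bar s\cdot\overline{\Lambda^*}})$ using H\"older. Your treatment of the centering discrepancy between $C_\tau^*$ (centered at $\frac{\alpha^{\tau-2}}{(1+\alpha)^{\tau-1}}\frac{n_1}{2}$) and $\tilde C_\tau^*$ (centered at $\q(C_\tau)$) is in fact more explicit than the paper's, which simply writes $\overline{C^*}=\overline{\Lambda^*}+\overline{\Theta^*}$ without comment.

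One point to tighten: the phrase ``applying the same argument with $\bar s$ replaced by $-\bar s$'' does \emph{not} yield the matching lower bound. Replacing $\bar s\to-\bar s$ only gives $\limsup \q(\e^{-\bar s\cdot\overline{\Lambda^*}})\le \e^{\bar sH\bar s^T/2}$, and combining this with $\q(\e^{\bar s\cdot\overline{\Lambda^*}})\q(\e^{-\bar s\cdot\overline{\Lambda^*}})\ge 1$ produces $\liminf\ge \e^{-\bar sH\bar s^T/2}$, the wrong sign. The paper obtains the lower bound by reversing the roles in H\"older, i.e.\ writing $\e^{\bar s\cdot\overline{\tilde C^*}/p}=\e^{\bar s\cdot\overline{\Lambda^*}/p}\e^{\bar s\cdot\overline{T^*}/p}$ and bounding $\q(\e^{\bar s\cdot\overline{\Lambda^*}})\ge \q(\e^{\bar s\cdot\overline{\tilde C^*}/p})^{p}/\q(\e^{\bar s\cdot\overline{T^*}q/p})^{p/q}$, then letting $p\downarrow 1$. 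Your parenthetical Cauchy--Schwarz route (write $\e^{-\bar s\cdot\overline{T^*}}=1+R_N$ with $\q(R_N^2)=\q(\e^{-2\bar s\cdot\overline{T^*}})-2\q(\e^{-\bar s\cdot\overline{T^*}})+1\to 0$) is a correct alternative and arguably cleaner; just drop the ``$\bar s\to-\bar s$'' clause, since it is not equivalent.
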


\medskip
\begin{proof}
Consider the number $\Theta_{\ell}$ of tori of length $\ell$  in $\CMNonetwo$  and center and normalize it by  $\sqrt{\frac{n_{1}}{2}}$ to obtain $\Theta_{\ell}^{\ast}$ and let $ \overline{\Theta^{\ast}}^{\sss (N)}=\left (\Theta_{2}^{\ast},\ldots, \Theta_{T}^{\ast} \right)$. 
 Then, $\overline{C^{\ast}}^{\sss (N)}=\overline{\Lambda^{\ast}}^{\sss (N)}+\overline{\Theta^{\ast}}^{\sss (N)}$, where  $\overline{C^{\ast}}^{\sss (N)}$ is defined in \eqref{Cstar}.  In order to compute the limit of the generating function of $\overline{\Lambda^{\ast}}^{\sss (N)}$ 
\be
\q\big(\e^{\overline{s}\cdot \overline{\Lambda^{\ast}}^{\sss (N)}}\big )=\q\big(\e^{\overline{s}\cdot \overline{C^{\ast}}^{\sss (N)} }\e^{-\overline{s}\cdot \overline{\Theta^{\ast}}^{\sss (N)}}\big )
\ee
we follow the same strategy 
that will be used in the proof of Lemma 4.3.  In particular  we use  H\"older's inequality to obtain 
\be\label{ineqholder}
\q\left( \e^{\overline{s}\cdot \overline{C^{\ast}}^{\sss (N)}/p}\right)^{p}/\q\left( \e^{\overline{s}\cdot \overline{\Theta^{\ast}}^{\sss (N)}
q/p}\right)^{p/q}\le \q\big(\e^{\overline{s}\cdot \overline{\Lambda^{\ast}}^{\sss (N)}}\big ) \le \q\left( \e^{\overline{s}\cdot \overline{C^{\ast}}^{\sss (N)}p}\right)^{1/p} \q\left( \e^{-\overline{s}\cdot \overline{\Theta^{\ast}}^{\sss (N)}q}\right)^{1/q}
\ee
where $p=1+\vep$ and $1/p+1/q=1$, so that $q=(1+\vep)/ \vep$ and $\vep>0$. Now, fixed  $s^{*}>0$, we want to prove that 
\be\label{limqteta_star}
\lim_{N\to \infty} \q \left (  \e^{\overline{s}\cdot \overline{\Theta^{\ast}}^{\sss (N)}}\right)  =1
\ee
for all $\bar{s}$ with $\max_i|s_i|=s^{*}$.  Since $\Theta_{\ell}^{\ast}=(\Theta_{\ell}-\q(\Theta_{\ell}))/\sqrt{n_{1}/2}$, we can write
	\be\label{qteta_star}
	\q \left (  \e^{\overline{s}\cdot \overline{\Theta^{\ast}}^{\sss (N)}}\right)  
	= \exp\left 	(-\frac{\sum_{\ell=2}^{T} s_{\ell} \q(\Theta_{\ell})}{\sqrt{n_{1}/{2}}}\right) 
	\q \left (\exp\left (\frac{\sum_{\ell=2}^{T} s_{\ell} \Theta_{\ell}}{\sqrt{n_{1}/{2}}}\right)\right ).
	\ee
Since the number of tori $K_{\sss N}(t)$ can be written as $K_{\sss N}(t)=\sum_{\ell} \Theta_{\ell}$ and when $\max_i |s_i|=s^*$, we have
	\eqn{
	\label{mean-KN(t)-comp}
	\frac{\left  | \sum_{\ell=2}^{T} s_{\ell} \q(\Theta_{\ell}) \right |}{\sqrt{n_{1}/2}} 
	< s^{*}\frac{\q(K_{\sss N}(t))}{\sqrt{n_{1}/2}},
	}
from which, recalling that $\q(K_{\sss N}(t))\sim \log N$ (see \eqref{lim_av_k}), and  $n_{1}\sim \frac{(1-p)}{2} N$, we conclude that the first factor in the right-hand side of \eqref{qteta_star} converges to 1 as $N\to \infty$. In order to deal with the second factor, and again with $\max_i |s_i|=s^*$,
	\be\label{bound_gfteta}
	\q \Big( \exp\Big (-s^{*}\frac{K_{\sss N}(t)}{\sqrt{n_{1}/{2}}}\Big) \Big) 
	\le  \q \Big(\exp\Big(\frac{\sum_{\ell=2}^{T} s_{\ell} \Theta_{\ell}}{\sqrt{n_{1}/{2}}}\Big)\Big)
	\le \q \Big( \exp\Big(s^{*}\frac{K_{\sss N}(t)}{\sqrt{n_{1}/{2}}}\Big) \Big).
	\ee
By convexity of $x\mapsto \e^{x}$, we have that $\expec[\e^{X}]\geq \e^{\expec[X]}$, so the lower bound is at least $1-o(1)$ by \eqref{mean-KN(t)-comp}.

For the upper bound, we note that $K_{\ss N}(t)$ is stochastically dominated by $K_{\sss N}$, the number of tori in a graph of $N$ vertices of degree 2. Thus, recalling that $K_{\sss N}$ is the sum of independent Bernoulli variables, see \eqref{kappa_N} and \eqref{bernI},
	\be
	\q \Big( \exp\Big(s\frac{K_{\sss N}(t)}{\sqrt{n_{1}/{2}}}\Big) \Big)
	\leq\prod_{i=1}^{N}
	\Big( 1+\frac{\e^{s/\sqrt{n_{1}/2}} -1}{2N-2i+1}\Big). 
	\ee
We apply this to $s=s^*>0$ and consider 
	\be\label{bondb}
	0 \le \log \q \Big( \exp\Big(s\frac{K_{\sss N}(t)}{\sqrt{n_{1}/{2}}}\Big) \Big)
	= \sum_{i=1}^{N} \log \Bigg (1+  \frac{ \e^{s/\sqrt{n_{1}/2}} -1 }{2N-2i+1} \Bigg)
	\le \Big(\e^{s/\sqrt{n_{1}/2}} -1\Big)
	\sum_{j=0}^{N-1} \frac{1}{2j+1} 
	\ee
where we use that $\log(1+x)\le x$ for $x>0$.  Since $\sum_{j=0}^{N-1} \frac{1}{2j+1}\sim \log N$ and $\exp\left(s/\sqrt{n_{1}/2}\right) -1 \sim \col{\sqrt{\frac{2}{1-p}}\frac{s}{\sqrt{N}}}$, we conclude that right-hand side of the last display converges to $0$, as required. 
\end{proof} 
\medskip

\begin{proof}[Proof of Lemma \ref{averaged_que_CLT}.] We split $X_{\sss N}$ into two parts as
	\be
	\label{Xk}
 	X_{\sss N}^{\sss (1)}(k) = \, \sqrt{N} \sum_{l=2}^k \left(p_l^{\sss{(N)}} - \q(p_l^{\sss{(N)}})\right) 	\gamma_l(B),
	\qquad
	\, X_{\sss N}^{\sss (2)}(k)=X_{\sss N} -X_{\sss N}^{\sss (1)}(k),
	\ee
where we fix $k\in \{2,\ldots, N-1\}.$ Then, we use H\"older's inequality to bound
	\eqn{
	\label{MGF-ub}
	\q\big(\e^{X_{\sss N}}\big)=\q\big(\e^{X_{\sss N}^{\sss (1)}(k)}\e^{X_{\sss N}^{\sss (2)}(k)}\big)
	\leq	\q\big(\e^{p X_{\sss N}^{\sss (1)}(k)}\big)^{1/p}\q\big(\e^{qX_{\sss N}^{\sss (2)}(k)}\big)^{1/q},
	}
where $p=1+\vep$ and $1/p+1/q=1$, so that $q=(1+\vep)/\vep$. Further, with the same choices of $p,q$, we can also bound
	\eqn{
	\q\big(\e^{X_{\sss N}^{\sss (1)}(k)/p}\big)=\q\big(\e^{X_{\sss N}/p}\e^{-X_{\sss N}^{\sss (2)}(k)/p}\big)
	\leq	\q\big(\e^{X_{\sss N}}\big)^{1/p}\q\big(\e^{X_{\sss N}^{\sss (2)}(k)(q/p)}\big)^{1/q},
	}
so that
	\eqn{
	\label{MGF-lb}
	\q\big(\e^{X_{\sss N}}\big)\geq \q\big(\e^{X_{\sss N}^{\sss (1)}(k)/p}\big)^{p}/\q\big(\e^{X_{\sss N}^{\sss (2)}(k)(q/p)}\big)^{p/q}.
	}
We aim to prove that, for every $b\in \R$ and $k\in \N$,
	\eqn{
	\label{aim-CLT-MGF-1}
	\lim_{N\rightarrow\infty} \q\big(\e^{b X_{\sss N}^{\sss (1)}(k)}\big)=\e^{b^2 \sigma^{2}_{\sss G}(k)/2},
	}
where $\lim_{k\rightarrow \infty} \sigma_{\sss G}(k)^2=\sigma_{\sss G}^2$ (we give the explicit expression of $\sigma^{2}_{\sss G}(k)$ and $\sigma^{2}_{\sss G}$ at the end of the present section) and, for every $b\in \R$,
	\eqn{
	\label{aim-CLT-MGF-2}
	\limsup_{k\rightarrow \infty}
	\limsup_{N\rightarrow\infty} \q\big(\e^{b X_{\sss N}^{\sss (2)}(k)}\big)=1.
	}
Substituting \eqref{aim-CLT-MGF-1}--\eqref{aim-CLT-MGF-2} into \eqref{MGF-ub}--\eqref{MGF-lb} and letting $\vep\searrow 0$ with $p=1+\vep$, $q=(1+\vep)/ \vep$ concludes the proof of Lemma \ref{averaged_que_CLT}.
We continue with the proofs of \eqref{aim-CLT-MGF-1} and \eqref{aim-CLT-MGF-2}.

\paragraph{Proof of \eqref{aim-CLT-MGF-1}.} This is a direct consequence of Lemma \ref{cltLambda}.
\qed

\paragraph{Proof of \eqref{aim-CLT-MGF-2}.} Here we need to show that we can effectively truncate the sum over $k$. We start by effectively going to the single variable case. We denote 
	\eqn{
	\gamma_{\sss >k}=\sum_{\ell>k} |\gamma_{\ell}|,
	\qquad
	q_{\ell}=|\gamma_{\ell}|/\gamma_{\sss >k}.
	}
Then, we note that, with $Y$ the random variable for which $\prob(Y=\ell)=q_{\ell}$ and denoting $b_{\sss N}=b\sqrt{N}$, we can rewrite
	\eqan{
	\q\big(\e^{b X_{\sss N}^{\sss (2)}(k)}\big)
	&=\q\Big(\exp{\Big\{b_{\sss N}\gamma_{\sss >k} 
	\expec_{\sss Y}\big[\sign(\gamma_Y)\left(p_Y^{\sss{(N)}} - \q(p_Y^{\sss{(N)}})\right)\big]\Big\}}\big)\\
	&\leq \q\Big(\expec_{\sss Y}\Big[\exp{\Big\{b_{\sss N}\gamma_{\sss >k} 
	\sign(\gamma_Y)\left(p_Y^{\sss{(N)}} - \q(p_Y^{\sss{(N)}})\right)\Big\}}\Big]\Big)\nn,
	}
where the inequality follows by Jensen's inequality and $\expec_{\sss Y}$ denotes the expectation w.r.t.\ $Y$ (keeping all other randomness fixed). Thus,
	\eqan{
	\q\big(\e^{b X_{\sss N}^{\sss (2)}(k)}\big)
	&\leq \sum_{\ell>k} q_{\ell} \q\Big(\e^{b_{\sss N}\gamma_{\sss >k} 
	\sign(\gamma_\ell)\left(p_\ell^{\sss{(N)}} - \q(p_\ell^{\sss{(N)}})\right)}\Big).
	}
Thus, it suffices to bound the moment generating function of $p_\ell^{\sss{(N)}} - \q(p_\ell^{\sss{(N)}})$ for a single $\ell$.

We continue by studying this moment generating function. Denote $N_{\ell}=Np_\ell^{\sss{(N)}}$, so that
	\eqn{
	\q\Big(\e^{b_{\sss N}\gamma_{\sss >k} 
	\sign(\gamma_\ell)\left(p_\ell^{\sss{(N)}} - \q(p_\ell^{\sss{(N)}})\right)}\Big)
	=\q\Big(\e^{b\gamma_{\sss >k} 
	\sign(\gamma_\ell)\left(N_\ell- \q(N_\ell)\right)/\sqrt{N}}\Big).
	}
For $\theta\in {\mathbb R}$, we let
	\eqn{
	M_{n_1,n_2}^{\sss (\ell)}(\theta)=\q\Big(\e^{\theta \left(N_\ell- \q(N_\ell)\right)}\Big).
	}
We prove by induction on $n_1\geq 2$ that there exists $A,\vep>0$ such that, for all $|\theta|\leq \vep$ and all $n_2\geq 0$,
	\eqn{
	\label{Mn1n2-def}
	M_{n_1,n_2}^{\sss (\ell)}(\theta)\leq \e^{An_1\theta^2}.
	}
For $n_1=0$, $M_{n_1,n_2}^{\sss (\ell)}(\theta)\equiv 1$.
This initiates the induction hypothesis. To advance the induction hypothesis, we note that, with 
	\eqn{
	p_{n_1,n_2}(k)=\col{\q}(L(1)=k),
	}
the moment generating function $M_{n_1,n_2}^{\sss (\ell)}(\theta)$ satisfies the recursion
	\eqan{
	\label{Mn1n2-rec}
	M_{n_1,n_2}^{\sss (\ell)}(\theta)&=\sum_{k\geq 2} M_{n_1-2,n_2-k+2}^{\sss (\ell)}(\theta) p_{n_1,n_2}(k) 
	\big[\indic{k\neq \ell} + \indic{k=\ell}\e^{\theta}\big] \e^{-\theta p_{n_1,n_2}(\ell)}.
	} 
We use the induction hypothesis, which is allowed since the summand in \eqref{Mn1n2-rec} is non-negative, to arrive at
	\eqan{
	\label{Mn1n2-rec-bd}
	M_{n_1,n_2}^{\sss (\ell)}(\theta)&\leq \e^{A(n_1-2)\theta^2} \sum_{k\geq 2} p_{n_1,n_2}(k) 
	\big[\indic{k\neq \ell} + \indic{k=\ell}\e^{\col{\theta}}\big] \e^{-\theta p_{n_1,n_2}(\ell)}\\
	&= \e^{A(n_1-2)\theta^2}
	\big[1+ (\e^{\theta}-1)p_{n_1,n_2}(\ell)\big] \e^{-\theta p_{n_1,n_2}(\ell)}.\nn
	} 
Choose $\vep>0$ sufficiently small, so that $\e^{\theta}\leq 1+\theta+\theta^2$ for all $|\theta|\leq \vep$. Then,
	\eqan{
	\label{Mn1n2-rec-bd-a}
	M_{n_1,n_2}^{\sss (\ell)}(\theta)&\leq \e^{A(n_1-2)\theta^2}
	\big[1+ (\theta+\theta^2)p_{n_1,n_2}(\ell)\big] [1-\theta p_{n_1,n_2}(\ell)+\theta^2 p_{n_1,n_2}(\ell)^2]\\
	&\leq \e^{A(n_1-2)\theta^2}[1+4\theta^2 p_{n_1,n_2}(\ell)^2].\nn
	} 
Choosing $A>0$ sufficiently large, we can advance the induction hypothesis. We conclude that \eqref{Mn1n2-def} holds. Applying \eqref{Mn1n2-def} with $\theta=b\gamma_{\sss >k} \sign(\gamma_\ell)/\sqrt{N}$ leads to
	\eqan{
	\q\big(\e^{b X_{\sss N}^{\sss (2)}(k)}\big)
	&\leq \sum_{\ell>k} q_{\ell} \e^{A(b\gamma_{\sss >k})^2n_1/N}\leq \e^{A(b\gamma_{\sss >k})^2},
	}
since $(q_{\ell})_{\ell>k}$ is a probability measure and $n_1\leq N$.
This completes the proof of \eqref{aim-CLT-MGF-2}, since $\gamma_{\sss >k}$ is exponentially small in $k$.
\qed

\old{
The proof  of Lemma \ref{averaged_que_CLT} further requires bounds on the distribution of line lengths as formulated int he following lemma:

\begin{lemma}[Exponential tails for lengths of lines]
\label{bound_prob_linee}
For every $l,N > 1$, there exists a constant $C_1 > 0$ such that
	\be
	\label{boundql}
	\q\left(L_{\sss N}(1)=l\right) \, \leq \, C_1 a^{l-2}, \quad \quad \mbox{with}\; a < 1,
	\ee
andm for every $l,j,N > 1$ with $l,j \leq \theta N$ and $\theta \leq p/4$, there exists a constant $C_2 > 0$ such that
	\be
	\label{boundqql}
	\left|\q\left( \left.L_{\sss N}(2) = j \right| L_{\sss N}(1)=l \right) - \q\left(L_{\sss N}(1)=j\right)\right| 
	\,\leq \, \frac{C_2 l j}{N}.
	\ee
\end{lemma}

\begin{proof}
The value of $\q\left(L_{\sss N}(1)=l\right)$ is explicitly given in \eqref{avLl}. Then, with $a \, := \, \left(\frac{2n_2}{\ell_{\sss N} -1} \right) \, < \, 1$,
	\be
	\q\left(L_{\sss N}(1)=l\right) \, 
	= \, \left(\prod_{i=0}^{l-3}\frac{2n_2-2i}{\ell_{\sss N} -1-2i} \right) \cdot \left(\frac{n_1-1}{\ell_{\sss N} -1-2(l-2)}\right) 
	\,\leq \,a^{l-2} \cdot C_1, 
	\ee
where the constant $C_1>0$ is such that
	\be
	\left(\frac{n_1-1}{\ell_{\sss N} -1-2(l-2)}\right) \, \leq \, C_1.
	\ee
Now we apply the inequality
	\be
	\left|\prod_{i=1}^l \alpha_i - \prod_{i=1}^l \beta_i\right| \, \leq \, \sum_{i=1}^l \left|\alpha_i-\beta_i\right|,\, 
	\quad \quad \mbox{with}  \; \alpha_i,\beta_i\, \in \, [0,1], 
	\ee
to estimate the difference between (\ref{cond_pr_lungh_linee}) and (\ref{avLl}) as
	\begin{align}\nonumber
	\mbox{l.h.s. }\eqref{boundqql}
 	&\leq \, \sum_{i=0}^{j-3} \left|  \frac{2n_2-2(l-2) -2i}{\ell_{\sss N} -2(l-2)-3-2i} 
	-  \frac{2n_2-2i}{\ell_{\sss N} -1-2i}\right|\nn\\
	&\qquad	+ \left|\frac{n_1-3}{\ell_{\sss N} -2(l-2) -3-2(j-2)} - \frac{n_1-1}{\ell_{\sss N} -1-2(j-2)} \right|  \nn\\
	&\leq \, \frac{\left(2p+12 \right) \left( j-2\right)  l N}{N^2}+\frac{ (2p+10)ljN }{N^2} \frac{C_2lj}{N}.
	\end{align}

\end{proof}

\noindent Now we are ready to prove Lemma \ref{averaged_que_CLT}:
\begin{proof}[Proof of Lemma \ref{averaged_que_CLT}] 
Recall the definition of $X_{\sss N}$ in \eqref{XN-rv-def}.
We have to prove  that
	\be
	\lim_{N \rightarrow \infty} \, \q\left(X_{\sss N} \leq x \right) 
	\, = \, {\mathscr G}_{\sss G}( x) \quad \quad \forall\, x \, \in \,   \mathbb{R},
	\ee
where ${\mathscr G}_{\sss G}( x)$ is the distribution function of a normal random variable $N(0,\sigma_{\sss G}^{2})$.  In order to apply Theorem \ref{cltPB},  we have to take into account the fact that it deals with 
the limiting  joint distribution  of a finite number $T$ of random variables, while in our case the variable $X_{\sss N}$ is a sum of an unbounded number, $N$, of terms. We can deal with this issue by using
a truncation argument. Indeed, we split $X_{\sss N}$  into three parts:
	\be
	X_{\sss N} \, = \, X_{\sss N}^{\sss (1)}(k) \, + \, X_{\sss N}^{\sss (2)} \, + \, X_{\sss N}^{\sss (3)},
	\ee
where, for fixed $k\in \{2,\ldots,\lfloor \theta N-1\rfloor \} \,$ and $\theta  \leq \frac{p}{4}$, we define
	\be\label{Xk}
 	X_{\sss N}^{\sss (1)}(k) \, = \, \sqrt{N} \sum_{l=2}^k \left(p_l^{\sss{(N)}} - \q \left(p_l^{\sss{(N)}}\right)\right) 	\gamma_l(B),
	\ee
	\be
	 X_{\sss N}^{\sss (2)} \, = \, \sqrt{N}\sum_{l=k+1}^{\theta N} 
	\left(p_l^{\sss{(N)}} - \q \left(p_l^{\sss{(N)}}\right)\right) \gamma_l(B),
	\ee
	\be
 	X_{\sss N}^{\sss (3)} \, = \, \sqrt{N}\sum_{l=\theta N +1}^N 
	\left(p_l^{\sss{(N)}} - \q \left(p_l^{\sss{(N)}}\right)\right) \gamma_l(B).
	\ee
Roughly, our truncation argument runs as follows: we introduce a parameter $\delta >0$ and write the distribution function of $X_{\sss N}$ as
\begin{align}
\q\left(X_{\sss N} \leq x \right) \, & =  \,\q\left(X_{\sss N} \leq x, \, \left|X_{\sss N}^{\sss (2)} \right| \leq \frac{\delta}{2}, \, \left|X_{\sss N}^{\sss (3)}\right|  \leq \frac{\delta}{2} \right) \label{b1} \\ 
& + \,  \q\left(X_{\sss N} \leq x, \, \left|X_{\sss N}^{\sss (2)} \right| > \frac{\delta}{2} \right)\label{b2} \\ 
& + \,  \q\left(X_{\sss N} \leq x, \, \left|X_{\sss N}^{\sss (2)} \right| \leq \frac{\delta}{2}, \, \left|X_{\sss N}^{\sss (3)} \right| > \frac{\delta}{2}  \right). \label{b3}
\end{align}
Then we will show that, as $N\to \infty$, \eqref{b2} and \eqref{b3} have a vanishing limit, while \eqref{b1} is essentially given by the distribution of $X_{\sss N}^{\sss (1)}(k)$, which is normal in the limit, being a finite sum of 
$k-1$ jontly normal random variables by Theorem \ref{cltPB}.\\
We start by bounding the probability of the events $\left|X_{\sss N}^{\sss (2)}\right| \geq \delta $ and $\left|X_{\sss N}^{\sss (3)}\right| \geq \delta$.
The estimate for the former event is obtained by virtue of the Chebyshev inequality. Fixing $\delta > 0$, we have
\be\label{probXN2delta}
\q\left(\left|X_{\sss N}^{\sss (2)}\right| \geq \delta \right) \leq \frac{Var_{\sss \q}\left(X_{\sss N}^{\sss (2)}\right) }{{\delta}^2},
\ee
where
\begin{align}\nonumber
Var_{\sss \q}\left(X_{\sss N}^{\sss (2)}\right) \, & = \, N \sum_{l=k+1}^{\theta N} \left(\gamma_l(B)\right)^2 Var_{\sss \q}\left(p_l^{\sss{(N)}}\right) \\
& + \, N \sum \limits_{\substack{l,j =k+1\\ l\neq j}}^{\theta N} \gamma_l(B)\gamma_j(B) Cov_{\sss \q}\left(p_l^{\sss{(N)}},p_j^{\sss{(N)}}\right).
\end{align}
The functions $\gamma_l(B)$ can be bounded by a constant $C_{3}>0$ independent of $N$
\be\label{boundgamma}
\gamma_l(B)\, \leq \, C_3,
\ee
while the variances of $p_{l}^{\sss{(N)}}$, given in \eqref{varpln}, can be trivially bounded since $K_{\sss N}^{\sss (l)}\le N$:
\be\label{boundvarpl}
Var_{\sss \q}\left(p_l^{\sss{(N)}}\right)\, \leq \, \q\left(L_{\sss N}(1)=l\right) \left( \frac{1}{N} + \Big|\q\left( \left.L_{\sss N}(2) = l \right| L_{\sss N}(1)=l \right) - \q\left(L_{\sss N}(1)=l\right)\Big|\right).
\ee
The covariance of $p_{l}^{\sss{(N)}}$  and $p_{j}^{\sss{(N)}}$ can be explicitly computed and the following bound is obtained still using $K_{\sss N}^{\sss (l)}\le N$:
\begin{align}\label{boundcovarpl}
Cov_{\sss \q}\left(p_l^{\sss{(N)}},p_j^{\sss{(N)}}\right)\, &= \, \q\left(p_{l}^{\sss{(N)}}p_{j}^{\sss{(N)}}\right)-\q\left(p_{l}^{\sss{(N)}}\right)\q\left(p_{j}^{\sss{(N)}}\right)\\ \nn
&= \,\frac{1}{N^2}K_{\sss N}^{\sss (l)}\left(K_{\sss N}^{\sss (l)}-1\right)\q\left(\left.L_{\sss N}(2) = j \right| L_{\sss N}(1)=l \right)\q\left(L_{\sss N}(1)=l\right)\\ \nn
&- \, \frac{\left(K_{\sss N}^{\sss (l)}\right)^2}{N^2}\q\left(L_{\sss N}(1)=l\right)\q\left(L_{\sss N}(1)=j\right) \\ \nn
& \leq \, \q\left(L_{\sss N}(1)=l\right)\Big[\q\left( \left.L_{\sss N}(2) = j \right| L_{\sss N}(1)=l \right) - \q\left(L_{\sss N}(1)=j\right)\Big].
\end{align}
Then, assuming without loss of generality that $l \geq j$,  putting together \eqref{boundvarpl},  \eqref{boundcovarpl},  \eqref{boundql} and  \eqref{boundqql}, we obtain
\be\label{bvarx2}
Var_{\sss \q}\left(X_{\sss N}^{\sss (2)}\right)\, \leq \,  C_1 C_3^2 \sum_{l=k+1}^{\theta N}\left(1 + {C_2 l^2}\right)a^{l-2}  \, +\, C_{1}C_{2}C_{3}^{2}\sum \limits_{\substack{l,j =k+1\\ l > j}}^{\theta N}  l j a^{l-2} .
\ee
Since $0<a<1$, the sums appearing in the previous line are convergent as $N\to \infty$. As a consequence, in the same limit  $Var_{\sss \q}\left(X_{\sss N}^{\sss (2)}\right)$ can be made arbitrarily small by taking $k$ large enough.
This fact, together with \eqref{probXN2delta}, provides us with a vanishing upper bound of $\q\left(\left|X_{\sss N}^{\sss (2)}\right| \geq \delta \right)$. The probability of the same event for $X_{\sss N}^{\sss (3)}$ is estimated by 
the Markov inequality
\be
\q\left(\left|X_{\sss N}^{\sss (3)}\right| \geq \delta \right) \leq \frac{\q\left(\left|X_{\sss N}^{\sss (3)}\right|\right) }{\delta},
\ee
for any given $\delta >0$. The average of $|X_{\sss N}^{\sss (3)}|$ can be bounded by \eqref{avplL} and \eqref{boundql}
\begin{align}\nonumber
\q\left(\left|X_{\sss N}^{\sss (3)}\right|\right) \, & \leq \, 2 \sqrt{N} \sum_{l=\theta N +1}^N \left|\gamma_l(B)\right| \q\left(p_l^{\sss{(N)}}\right) \\\nonumber
& \leq \, 2 \sqrt{N} \sum_{l=\theta N +1}^N \left|\gamma_l(B)\right| \q\left(L_{\sss N}(1)=l\right) \\ 
& \leq \,2 \sqrt{N}  C_1 C_3 \sum_{l=\theta N +1}^Na^{l-2},\label{bx3}
\end{align}
where last sum  is vanishing as $N\to \infty$. Indeed, being  the difference of two partial sums of a geometric series of ratio $0<a<1$,  it goes to zero exponentially fast. Now we are ready to compute the limit of $\q\left(X_{\sss N} \leq x \right)$  form \eqref{b1}-\eqref{b2}.  Fix an arbitrary $\epsilon >0$ and take a $\delta>0$. From the convergence of the series \eqref{bx3} and 
\eqref{bvarx2} we conclude that there exists an integer  $\tilde{k}(\varepsilon,\delta)$ such that the inequalities
$Var_{\sss \q} \left( X_{\sss N}^{\sss (2)} \right) \le \frac{\varepsilon}{4} \left(\frac{\delta}{2}\right)^2$ and 
$\q \left(\left|X_{\sss N}^{\sss (3)} \right|\right)\le \frac{\varepsilon}{4}\frac{\delta}{2}$ hold  {\em provided that $k >\tilde{k}(\varepsilon,\delta) $}. Thus, we have:
\noindent
\be\label{b4}
\q\left(X_{\sss N} \leq x, \, \left|X_{\sss N}^{\sss (2)} \right| > \frac{\delta}{2} \right) \, \leq \, \q\left(\left|X_{\sss N}^{\sss (2)} \right| > \frac{\delta}{2} \right) \, \leq \, \frac{Var_{\sss \q}\left(X_{\sss N}^{\sss (2)}\right)}{\left(\frac{\delta}{2}\right)^2} \, \leq \, \frac{\frac{\varepsilon}{4} \left(\frac{\delta}{2}\right)^2}{\left(\frac{\delta}{2}\right)^2}\, = \, \frac{\varepsilon}{4},
\ee
and
\be\label{b5}
\q\left(X_{\sss N} \leq x, \, \left|X_{\sss N}^{\sss (2)} \right| \leq \frac{\delta}{2}, \, \left|X_{\sss N}^{\sss (3)} \right| > \frac{\delta}{2}  \right)\, \leq \,\q\left( \left|X_{\sss N}^{\sss (3)} \right| > \frac{\delta}{2}  \right)\, \leq \, \frac{\q \left(\left|X_{\sss N}^{\sss (3)} \right|\right)}{\frac{\delta}{2}} \, \leq \, \frac{\frac{\varepsilon}{4}\frac{\delta}{2}}{\frac{\delta}{2}}\, = \, \frac{\varepsilon}{4},
\ee
that give the bound for \eqref{b2} and \eqref{b3}.  The next task is to control the limit of the probability  $ \q \left (X_{\sss N}\le x, \left | X_{\sss N}^{\sss (2)} \right| \leq \frac{\delta}{2}, \, \left|X_{\sss N}^{\sss (3)} \right |\le \frac \delta 2\right )$. Since the event $\left \{X_{\sss N}\le x, \left | X_{\sss N}^{\sss (2)} \right| \leq \frac{\delta}{2}, \, \left|X_{\sss N}^{\sss (3)} \right |\le \frac \delta 2\right \}$ implies  $\left \{X_{\sss N}^{\sss (1)}  \leq x+\delta \right \}$ we have the upper bound:
\begin{equation}\label{b6}
\q\left(X_{\sss N} \leq x, \, \left|X_{\sss N}^{\sss (2)} \right| \leq \frac{\delta}{2}, \, \left|X_{\sss N}^{\sss (3)} \right| \leq \frac{\delta}{2} \right)  \leq \, \q\left(X_{\sss N}^{\sss (1)}\le x+\delta\right ).
\end{equation}
On the other hand  the event $\left \{X_{\sss N}^{\sss (1)}\le x-\delta ,\; \left | X_{\sss N}^{\sss (2)} \right| \leq \frac{\delta}{2}, \; \left|X_{\sss N}^{\sss (3)} \right |\le \frac \delta 2\right \}\;$ implies \\$\;\left \{X_{\sss N}\le x, \; \left | X_{\sss N}^{\sss (2)} \right| \leq \frac{\delta}{2}, \; \left|X_{\sss N}^{\sss (3)} \right |\le \frac \delta 2\right \}$,
thus, from the general inequality $\prob \left ( A\cap B \cap C\right)\ge \prob (A)-\prob (B^{c}) -\prob (C^{c})$, we obtain a lower bound:
\begin{equation}\label{b7}
\q\left(X_{\sss N}^{\sss (1)} \leq x- \delta \right)- \q\left(\left|X_{\sss N}^{\sss (2)} \right| > \frac{\delta}{2} \right)- \q\left(\left|X_{\sss N}^{\sss (3)} \right| > \frac{\delta}{2} \right)  \le \q\left(X_{\sss N} \leq x, \left|X_{\sss N}^{\sss (2)} \right| \leq \frac{\delta}{2}, \left|X_{\sss N}^{\sss (3)} \right| \leq \frac{\delta}{2} \right).
\end{equation}
Therefore, combining \eqref{b1}-\eqref{b3} with \eqref{b4},\eqref{b5},\eqref{b6},\eqref{b7}, we get the following inequality:
\begin{equation}
-\frac{\varepsilon}{2}+\q\left(X_{\sss N}^{\sss (1)}(k) \leq x- \delta \right)\le \q\left(X_{\sss N} \leq x \right) \le \q\left(X_{\sss N}^{\sss (1)}(k) \leq x + \delta \right) + \frac{\varepsilon}{2},
\end{equation}
for any $k$ larger than  $\tilde{k}(\varepsilon, \delta)$. Now we observe that  since $X^{\sss (1)}_{\sss N}(k)$ is a linear combination (see \eqref{XkLambda}) of a set of $k-1$  random variables ${\Lambda_{\ell}^{\sss (N)}}^{\ast}$ that,  by Lemma \ref{cltLambda},  are jointly normal in the limit  $N\to \infty$,  the probability   $\q\left(X_{\sss N}^{\sss (1)}(k) \leq x  \right)$ converges to the distribution function ${\mathscr G}_{k}(x)$ of a  centered normal  variable 
with variance $\sigma^{2}_{\sss G}(k)$ depending on $k$. Therefore, in the limit  $N\to \infty$ the previous inequality gives:
\be
-\frac{\varepsilon}{2}+{\mathscr G}_{k}(x-\delta)\le \liminf_{N\to \infty}\q\left(X_{\sss N} \leq x \right) \le  \limsup_{N\to \infty}\q\left(X_{\sss N} \leq x \right) \le{\mathscr G}_{k}\left( x + \delta \right) + \frac{\varepsilon}{2}.
\ee
Now  we assume  that the sequence of the variances $(\sigma^{2}_{\sss G}(k))_{k}$ converges to a finite limit $\sigma_{\sss G}^{2}<\infty$ as $k\to \infty$ (we will prove this fact at the end of this section). This implies that  the sequence $({\mathscr G}_{k}(x))_{k}$ converges at each point $x$ to the distribution function ${\mathscr G}_{\sss G}(x)$ of the centered normal random variable 
with variance $\sigma_{\sss G}^{2}$
and then, taking the $k$-limit in the previous inequality, we obtain: 
\be
-\frac{\varepsilon}{2}+{\mathscr G}_{\sss G}(x-\delta)\le \liminf_{N\to \infty}\q\left(X_{\sss N} \leq x \right) \le  \limsup_{N\to \infty}\q\left(X_{\sss N} \leq x \right) \le{\mathscr G}_{\sss G}\left( x + \delta \right) + \frac{\varepsilon}{2}.
\ee
Now we can take the limit $\delta\to 0$, in order to get finally:
\be
-\frac{\varepsilon}{2}+{\mathscr G}_{\sss G}(x) \, \leq \, \liminf_{N\to \infty} \q\left(X_{\sss N} \leq x \right) \, \leq \,  \limsup_{N\to \infty} \q\left(X_{\sss N} \leq x \right) \, \leq \, {\mathscr G}_{\sss G}( x) + \frac{\varepsilon}{2},
\ee
which, being $\varepsilon >0$ an arbitrarily small quantity, proves Lemma \ref{averaged_que_CLT}.}

\vspace{0.4cm}
\noindent
We conclude this section reporting the explicit computation of   $\sigma^{2}_{\sss G}(k)$,  the limiting variance of  $X_{\sss N}^{\sss (1)}(k)$ appearing in equation  \eqref{aim-CLT-MGF-1}  and \col{of}
$\sigma^{2}_{\sss G}$, the limiting variance of $X_{N}$, see \eqref{sigmag2}.\smallskip 

\noindent
{\bf The limiting variance $\sigma_{\sss G}^{2}$.} We express $X_{\sss N}^{\sss (1)}(k)$ as a linear combination of  $\Lambda_{\ell}=Np^{\sss (N)}_{\ell}$ 
recalling that the variables ${\Lambda^{\ast}_{\ell}}$ in Lemma \ref{cltLambda},
are defined as $ {\Lambda^{\ast}_{\ell}}=\left ( \Lambda_{\ell} - \q(\Lambda_{\ell})\right )/\sqrt{\frac{n_{1}}{2}}$. Substituting in \eqref{Xk}, we obtain 
\be\label{XkLambda}
 X_{\sss N}^{\sss (1)}(k) \, = \, \sqrt{\frac{1-p}{2}}(1+o(1)) \sum_{l=2}^k  \gamma_l(B){\Lambda^{\ast}_{\ell}},
\ee
and notice that the variance
	\be
	\mathrm{Var}_{\q}( X_{\sss N}^{\sss (1)}(k)) \, = \frac{1-p}{2}(1+o(1)) \Bigg(\, \sum_{\ell=2}^{k} 
	\gamma_{\ell}^{2}(B)\mathrm{Var}_{\q}({\Lambda^{\sss (N)}_{\ell}}^{\ast})
	+\sum\limits_{\substack{\ell,j=2 \\ \ell \ne j}}^{k} \gamma_{\ell}(B)\gamma_{j}(B) 
	\mathrm{Cov}_{\q}\left({\Lambda^{\sss (N)}_{\ell}}^{\ast},{\Lambda^{\sss (N)}_{\ell}}^{\ast}\right )\Bigg)
	\ee
has a limit  as $N\to \infty$. Indeed, using the fact that the functions $\gamma_{\ell}(B)$ are independent of $N$,   from Theorem \ref{cltPB},
	\be\label{vargk}
	\sigma^{2}_{\sss G}(k):=
	\lim_{N\to \infty}\mathrm{Var}_{\q}( X_{\sss N}^{\sss (1)}(k)) 
	\, = \, \frac{1-p}{2}\Bigg(\sum_{\ell=2}^{k} \gamma_{\ell}^{2}(B) H_{\ell,\ell}
	+\sum\limits_{\substack{\ell,j=2 \\ \ell \ne j}}^{k} \gamma_{\ell}(B)\gamma_{j}(B) H_{\ell,j}\Bigg),
	\ee
where $H_{\ell,j}$ are the elements of the covariance matrix  explicitly given in \eqref{covmdPB} with $\alpha=\frac{2p}{1-p}$. Because of the exponentially small factor $(\frac{\alpha}{1+\alpha})^{\ell+j}$ in $H_{\ell,j}$, 
and the exponential smallness  of $\gamma_{\ell}(B)$ for large $\ell$, see \eqref{gamma-l-def}, the sums in \eqref{vargk} converge absolutely as $k\to \infty$. Thus, we conclude that the limiting variance $\sigma_{\sss G}^2$ exists and is given by \eqref{sigmag2}.
\end{proof}
\medskip

\noindent
{\small
{\bfseries Acknowledgments.}}
We are grateful to Aernout van Enter, with whom we discussed some of the topics in this work, for useful suggestions.
We thank \'Elie de Panafieu and Nicolas Broutin for sharing their preprint \cite{dPB} prior to publication. We acknowledge financial support from  the Italian Research Funding Agency (MIUR) through FIRB project ``Stochastic processes in interacting particle systems: duality, metastability and their applications'', grant n.\ RBFR10N90W.
The work of RvdH is supported in part by the Netherlands
Organisation for Scientific Research (NWO) through VICI grant 639.033.806 and the Gravitation {\sc Networks} grant 024.002.003.


{\small 

}

\end{document}